\newtheorem{lemma}{Lemma}[section]
\newtheorem{corollary}[lemma]{Corollary}
\newtheorem{theorem}[lemma]{Theorem}
\newtheorem{proposition}[lemma]{Proposition}
\newtheorem{assumption}[lemma]{Standing Assumption}
\theoremstyle{definition} 
\newtheorem{definition}[lemma]{Definition}
\newtheorem{example}[lemma]{Example}
\newtheorem{examples}[lemma]{Examples}
\begin{document}

\title{Type-Decomposition of a Pseudo-Effect Algebra}

\author{David J. Foulis,
Sylvia Pulmannov\'{a}, Elena Vincekov\'a}

\date{}

\subjclass{Primary 06C15; Secondary 17C65, 46L45}

\keywords{(pseudo-)effect algebra, (pseudo-)MV-algebra, orthomodular
poset, orthomodular lattice, boolean algebra,
von Neumann algebra, JW-algebra, Loomis dimension lattice, types I,
II, and III.}

\thanks{The second and third authors were supported by Research and
Development Support Agency under the contract No. APVV-0071-06
and  LPP-0199-07; grant VEGA 2/0032/09,
Center of Excellence SAS - Quantum Technologies; ERDF OP R\&D Project CE QUTE ITMS
26240120009}


\begin{abstract}
The theory of direct decomposition of a centrally orthocomplete effect
algebra into direct summands of various types utilizes the notion of
a type-determining (TD) set. A pseudo-effect algebra (PEA) is a
(possibly) noncommutative version of an effect algebra. In this
article we develop the basic theory of centrally orthocomplete PEAs,
generalize the notion of a TD set to PEAs, and show that TD sets
induce decompositions of centrally orthocomplete PEAs into direct
summands.
\end{abstract}

\address{Emeritus Professor, Department of Mathematics and
Statistics, University of Massachusetts, Amherst, MA; Postal
Address: 1 Sutton Court, Amherst, MA 01002, USA;
\vskip 0.2cm
Mathematical
Institute, Slovak Academy of Sciences,  Stef\'anikova 49, SK-814 73
Bratislava, Slovakia}
\email{foulis@math.umass.edu,
pulmann@mat.savba.sk, vincek@mat.savba.sk}
\maketitle 

\markboth{\sc Foulis,
Pulmannov\'{a}, Vincekov\'a}{\sc Type Decomposition}

\section{Introduction} 

The classic theorem stating that a von Neumann algebra decomposes
uniquely as a direct sum of subalgebras of types I, II, and III,
\cite{MandvN}, \cite[I, \S 8]{Dix}, has played a prominent role both
in the development of the theory of von Neumann algebras and in
the applications of this theory in mathematical physics. Analogous
type-decomposition theorems were featured in subsequent work on
various generalizations of von Neumann algebras, including studies
of AW*-algebras \cite{Kap51}, Baer *-rings \cite{Kap68}, and JW-algebras
\cite{Top65}. For a von Neumann algebra $A$, and for the aforementioned
generalizations thereof, the subset $P$ of all projections (self-adjoint
idempotents) in $A$ forms an orthomodular lattice (OML) \cite{Beran,
Kalm}, and the decomposition of $A$ into types induces a corresponding
direct decomposition of the OML $P$. Conversely, a direct decomposition
of $P$ yields a direct-sum decomposition of the enveloping algebra
$A$. These connections between direct-sum decompositions of $A$ and
direct decompositions of $P$ have motivated a number of studies
of direct decompositions of more general OMLs.

The type-decomposition theorem for a von Neumann algebra is dependent
on the von Neumann-Murray dimension theory; likewise, the early
type-decomposition theorems for OMLs were corollaries of the
lattice-based dimension theories of L. Loomis \cite{Loomis} and
of S. Maeda \cite{Maeda}. The work of Loomis and Maeda was further
developed by A. Ramsay \cite{Ram} who proved that an arbitrary
complete OML is uniquely decomposed into four special direct summands,
one of which can be organized into a Loomis dimension lattice. More
recent and considerably more general results on type-decomposition
based on dimension theory can be found in the monograph of K.
Goodearl and F. Wehrung \cite{GWdim}.

In \cite[\S 7]{Kalm86} G. Kalmbach, without employing lattice
dimension theory \emph{per se}, obtained decompositions of
an arbitrary complete OML into direct summands with various
special properties---moreover, Ramsay's fourfold decomposition
is a special case of Kalmbach's theory. In \cite{CCM}, J. Carrega,
G. Chevalier, and R. Mayet obtained the direct decompositions of
Kalmbach and Ramsay by methods more in the spirit of universal algebra.

In \cite{FoPuType}, the decomposition theory of Kalmbach, Carrega,
\emph{et al.} was extended to the class of centrally orthocomplete
effect algebras (COEAs) by employing the notion of a type-determining
(TD) set. Effect algebras \cite{FBEA} are very general partially
ordered algebraic structures, originally formulated as an algebraic
base for the theory of measurement in quantum mechanics. Special
cases of lattice-ordered effect algebras are OMLs and the MV-algebras
of C. Chang \cite{CCC}.

The notion of a (possibly) non-commutative effect algebra, called
a \emph{pseudo-effect algebra}, was introduced and studied in a
series of papers by A. Dvure\v{c}enskij and T. Vetterlein
\cite{DV1, DV2, Dv03}. Whereas a prototypic example of an effect
algebra is the order interval from $0$ to a positive element in
a partially ordered abelian group, the analogous interval in a
partially ordered non-commutative group forms a pseudo-effect
algebra.

We review the definition and some of the notation for a
pseudo-effect algebra $E$ in Section \ref{sc:PEAs}, and we
study the center $\Gamma(E)$ of $E$ in Section \ref{sc:Central}.
In Section \ref{sc:COPEAs}, we focus attention on centrally
orthocomplete pseudo-effect algebras (COPEAs) and define
the \emph{central cover} of an element in a COPEA. \emph{For the
remainder of the article, we assume that $E$ is a COPEA.} The
notion of a \emph{type-determining} (TD) subset of $E$ is
introduced in Section \ref{sc:TDsets}, it is shown that
TD subsets induce decompositions of $E$ into direct summands
of various types. The article ends with Section \ref{sc:Examples}
where the important idea of a \emph{type-class} of pseudo-effect
algebras is introduced and a number of pertinent examples of
type-classes and corresponding TD subsets of $E$ are presented. Examples of the corresponding decompositions are
given.

\section{Pseudo-effect algebras} \label{sc:PEAs} 
A partial algebra $(E;\,+,\,0,\,1)$, where $+$ is a partial binary operation
and $0$ and $1$ are constants, is called a \emph{pseudo-effect algebra}
(PEA) iff, for all $a,b,c\in E$, the following conditions hold for all $a,b,c
\in E$:
\begin{enumerate}
\item $a+b$ and $(a+b)+c$ exist iff $b+c$ and $a+(b+c)$ exist, and in
 this case $(a+b)+c=a+(b+c)$.
\item There is exactly one $d\in E$ and exactly one $e\in E$ such that
 $a+d=e+a=1$.
\item If $a+b$ exists, there are elements $d,e\in E$  such that $a+b=
 d+a=b+e$.
\item If $1+a$ or $a+1$ exists, then $a=0$.
\end{enumerate}
Suppose that $E$ is a pseudo-effect algebra. If $a,b\in E$, define
$a\leq b$ iff there exists an element $c\in E$ such that $a+c=b$;
then $\leq$ is a partial ordering on $E$ such that $0\leq a\leq 1$ for
all $a\in E$. It is possible to show that $a\leq b$ iff $b=a+c=d+a$ for
some $c,d\in E$. We write $c=:a\diagup b$ and $d=:b\diagdown a$. Then
$(b\diagdown a)+a=a+(a\diagup b)=b$, and $a=(b\diagdown a)\diagup b=b
\diagdown(a\diagup b)$.
If $a\leq b\leq c$, then
\begin{eqnarray*} & (c\diagdown a)\diagdown (b\diagdown a)=c\diagdown b;
\ (a\diagup b)\diagup(a\diagup c)=b\diagup c;\\
& (c\diagdown b)\diagup(c\diagdown a)=b\diagdown a;\ (a\diagup c)\diagdown
(b\diagup c)=a\diagup b.
\end{eqnarray*}
We define $x^-:=1\diagdown x$ and $x^{\sim}:=x\diagup 1$ for any $x\in E$.
For a given element $e\in E$, we denote the order interval from $0$ to
$e$ by $E[0,e]:=\{x\in E: 0\leq x\leq e\}$ and we define the partial
binary operation $+_e$ on $E[0,e]$ as follows: for $f,g\in E[0,e]$,
$f+_e g$ exists iff $f+g$ exists in $E$ and $f+g\in E[0,e]$, in which
case $f+_e g=f+g$. Then $(E[0,e];\,+_e,\,0,\,e)$ is a pseudo-effect algebra.
For any $x\in E[0,e]$ we have $x^{-_e}:=e\diagdown x$, $x^{\sim_e}
:=x\diagup e$, and $e=x^{-_e}+x=x+x^{\sim_e}$.

If $a,b\in E$, we write an existing least upper bound (respectively,
greatest lower bound) of $a$ and $b$ in the partially ordered set $E$
as $a\vee b$ (respectively, as $a\wedge b$). Similarly, $\bigvee\sb
{i\in I}e\sb{i}$ and $\bigwedge\sb{i\in I}e\sb{i}$ denote, respectively,
the least upper bound in $E$ (if it exists) and the greatest lower bound
in $E$ (if it exists) of a family $(e\sb{i})\sb{i\in I}\subseteq E$.
Elements $a,b\in E$ are \emph{disjoint} iff $a\wedge b=0$. We say that
$E$ is \emph{lattice-ordered} iff $a\vee b$ and $a\wedge b$ exist
in $E$ for all $a,b\in E$.

\begin{example} \label{ex:IntervalPEA}
Let $G$ be any partially ordered (not necessarily abelian)
additively-written group, choose any element $0\leq u\in G$, and
let $G[0,u]=\{g\in G:0\leq g\leq u\}$. Then $(G[0,u];\,+,\,0,\,u)$
is a pseudo-effect algebra if we restrict the group operation $+$ to
$G[0,u]$.
\end{example}
If $x_1,x_2,\ldots,x_n$ are elements of a pseudo-effect algebra $E$,
we define the \emph{orthosum} $x_1+x_2+\cdots +x_n$  by recurrence:
$x_1+x_2+\cdots +x_n$ exists iff $x_1+x_2+\cdots +x_{n-1}$ and
$(x_1+x_2+\cdots+x_{n-1})+x_n$ exists, in which case we put $x_1+
x_2+\cdots +x_n :=(x_1+x_2+\cdots +x_{n-1})+x_n$. Owing to
associativity, we may omit parentheses, but the order of elements
is important.

Let $E$ and $F$ be pseudo-effect algebras. A mapping $\phi:E\to F$ is
a \emph{morphism} of pseudo-effect algebras (PEA-morphism) iff $\phi(1_E)
=1_F$ (where $1_E$ and $1_F$ are the unit elements in $E$ and $F$,
respectively), and $\phi(a)+\phi(b)$ exists whenever $a+b$ exists, with
$\phi(a+b)=\phi(a)+\phi(b)$. A morphism is an \emph{isomorphism} of
pseudo-effect algebras (PEA-isomorphism) iff it is a bijection and
$\phi^{-1}$ is also a morphism.

For more about basic properties of pseudo-effect algebras see \cite{DV1,DV2}.

\section{Central elements of pseudo-effect algebras} \label{sc:Central}


\begin{assumption}
In the sequel, $(E;\,+,\,0,\,1)$ is a pseudo-effect algebra.
\end{assumption}

\begin{definition}\label{de:central} {\rm \cite[Definition 2.1]{Dv03}} An
element $c$ of $E$ is said to be \emph{central} if there exists an
isomorphism\footnote{With coordinatewise operations, the cartesian product
of PEAs is again a PEA.}
$$
f_c:E\to E[0,c]\times E[0,c^{\sim}]
$$
such that $f_c(c)=(c,0)$ and if $f_c(x)=(x_1,x_2)$, then $x=x_1+x_2$ for
any $x\in E$.
\end{definition}
We denote by $\Gamma(E)$ the set of all central elements of $E$, and
we refer to $\Gamma(E)$ as the \emph{center} of $E$. Clearly, $0,1\in
\Gamma(E)$. In the next proposition, we collect some properties of
central elements (see \cite[Propositions 2.2, 2.4, and 2.5]{Dv03}).
\begin{proposition}\label{pr:central} Let $c$ be a central element of
$E$, and let $f_c$ be the corresponding mapping from Definition
\ref{de:central}. Then, for all $x,y,x_1,x_2\in E$:
\begin{enumerate}
\item $f_c(c^{\sim})=(0,c^{\sim})$.
\item If $x\leq c$, then $f_c(x)=(x,0)$.
\item $c\wedge c^{\sim}=0$.
\item If $y\leq c^{\sim}$ then $f_c(y)=(0,y)$.
\item $c^{\sim}=c^-$.
\item $x\wedge c\in E$, $x\wedge c^{\sim}\in E$, and
$$
 f_c(x)=(x\wedge c, x\wedge c^{\sim}).
$$
\item If $f_c(x)=(x_1,x_2)$, then $x=x_1\vee x_2$, $x_1\wedge x_2=0$,
 and $x_1+x_2=x$.
\item $x\wedge c=0$ iff $x\leq c^-$ iff $x\leq c^{\sim}$ iff
 $c\leq x^-$ iff $c\leq x^{\sim}$.
\item $c+c\in E$ implies $c=0$.
\item Let $c_1,c_2,\ldots,c_n\in \Gamma(E)$, $c_i\wedge c_j=0$ for
 $i\neq j$, and $c_1+c_2+\cdots +c_n=1$. Then $x=x\wedge c_1+x\wedge c_2
 +\cdots +x\wedge c_n$.
\end{enumerate}
\end{proposition}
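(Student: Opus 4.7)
The overall plan is to process the ten items roughly in order, each time exploiting a single principle: $f_c$ is a PEA-isomorphism onto the direct product $E[0,c]\times E[0,c^{\sim}]$, so it preserves $+$, $\vee$, $\wedge$, and the order in both directions, and sends $0$ and $1$ to $(0,0)$ and $(c,c^{\sim})$ respectively.

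For (i), apply $f_c$ to $1=c+c^{\sim}$ to get $(c,c^{\sim})=(c,0)+f_c(c^{\sim})$, so $f_c(c^{\sim})=(0,c^{\sim})$. Items (ii) and (iv) then follow from order-preservation: $x\leq c$ forces $f_c(x)\leq(c,0)$, hence its second coordinate is $0$ and $x=x_1+0=x_1$; the dual argument, using (i), gives (iv). For (iii), a common lower bound $z$ of $c$ and $c^{\sim}$ satisfies $f_c(z)=(z,0)=(0,z)$, so $z=0$. For (v), the equality $f_c(c^-)+(c,0)=f_c(c^-+c)=(c,c^{\sim})$ yields $f_c(c^-)=(0,c^{\sim})=f_c(c^{\sim})$, and injectivity gives $c^-=c^{\sim}$.

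The key step is (vi). If $f_c(x)=(x_1,x_2)$, the coordinates are common lower bounds of $\{x,c\}$ and $\{x,c^{\sim}\}$ respectively. Given $z\leq x$ with $z\leq c$, item (ii) supplies $f_c(z)=(z,0)\leq(x_1,x_2)$, forcing $z\leq x_1$; thus $x_1=x\wedge c$, and symmetrically $x_2=x\wedge c^{\sim}$. Item (vii) is then immediate: the meet $x_1\wedge x_2$ is a common lower bound of $c$ and $c^{\sim}$, hence $0$ by (iii); and any common upper bound of $x_1,x_2$ has image dominating $(x_1,0)\vee(0,x_2)=(x_1,x_2)=f_c(x)$, so it dominates $x$. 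For (viii), the equation $x\wedge c=0$ is equivalent, via (vi) and (iv), to $x\leq c^{\sim}$; (v) identifies $c^-$ with $c^{\sim}$; and the passage between $x\leq c^-$ and $c\leq x^{\sim}$ (respectively between $x\leq c^{\sim}$ and $c\leq x^-$) comes from rewriting $1=c^-+c=x+x^{\sim}$ and invoking one-sided cancellation in the noncommutative sum.

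For (ix), if $c+c$ exists then $c+c\leq 1=c+c^{\sim}$; left-cancellation of $c$ yields $c\leq c^{\sim}$, whence $c=c\wedge c^{\sim}=0$ by (iii). Item (x) is an induction on $n$: the base case $n=2$ is (vi) since then $c_2=c_1^{\sim}$; for the step, (vi) gives $x=(x\wedge c_1)+(x\wedge c_1^{\sim})$, and one applies the inductive hypothesis inside $E[0,c_1^{\sim}]$ to the family $c_2,\ldots,c_n$. The main obstacle I anticipate is precisely this last reduction: one must verify that each $c_i$ with $i\geq 2$ is central in the interval $E[0,c_1^{\sim}]$ and that the restricted family remains disjoint with sum $c_1^{\sim}$. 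Centrality in the interval should follow by restricting $f_{c_i}$ along the inclusion $E[0,c_1^{\sim}]\hookrightarrow E$ after using (viii) to see $c_i\leq c_1^{\sim}$, while disjointness and the summation identity transfer directly from $E$; the bookkeeping is routine but is where the most care is needed.
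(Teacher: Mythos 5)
Your argument is essentially correct, but note that the paper itself offers no proof of this proposition: it is quoted verbatim from Dvure\v{c}enskij \cite[Propositions 2.2, 2.4, 2.5]{Dv03}, so there is no internal proof to compare against. What you give is the natural derivation straight from Definition \ref{de:central}, and items (i)--(ix) are complete and sound: you correctly exploit that $f_c$ is an order-isomorphism onto the product, that cancellation ($a+b=a+d\Rightarrow b=d$ and its right-handed twin) holds in any PEA, and that $a+b$ exists iff $a\le b^-$ iff $b\le a^{\sim}$. The only place that remains a sketch is (x). Your inductive plan is workable, but the step you flag --- that each $c_i$ ($i\ge 2$) is central \emph{in the interval algebra} $E[0,c_1^{\sim}]$ --- is not free: you must check that $(c_i)^{\sim}$ computed in $E[0,c_1^{\sim}]$ equals $c_1^{\sim}\wedge c_i^{\sim}$ (which follows from your (vi) applied to $x=c_1^{\sim}$), that the restricted map lands in and is surjective onto $E[0,c_i]\times E[0,c_1^{\sim}\wedge c_i^{\sim}]$, and that the partial sum $+_{c_1^{\sim}}$ agrees with $+$ on the relevant elements. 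In the paper's own development this inclusion $\Gamma(E)[0,d]\subseteq\Gamma(E[0,d])$ only appears later (Theorem \ref{th:subcentr} and Lemma \ref{le:lemma3.13}, the latter proved via the characterization in Theorem \ref{th:charcent}), so a self-contained proof of (x) must carry out that verification rather than cite it; alternatively you can iterate the $n=2$ case entirely inside $E$, writing $x\wedge c_1^{\sim}=(x\wedge c_1^{\sim})\wedge c_2+(x\wedge c_1^{\sim})\wedge c_2^{\sim}$ and using $c_i\le c_1^{\sim}\wedge\cdots\wedge c_{i-1}^{\sim}$ from (viii), though the final identification of the last remainder with $x\wedge c_n$ then needs its own small argument. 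Either way the gap is one of bookkeeping, not of ideas, and you have located it accurately.
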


In view of Proposition \ref{pr:central} (v), if $c\in \Gamma(E)$, then
we shall write $c\,':=c^-=c^{\sim}$. Also, we say that elements
$c,d\in \Gamma(E)$ are \emph{orthogonal} iff $c\wedge d=0$.

\begin{theorem}\label{th:central}{\rm \cite[Theorem 2.3]{Dv03}} If
$c,d\in \Gamma(E)$, then $c\wedge d$ exists in $E$ and belongs to
$\Gamma(E)$, and $\Gamma(E)=(\Gamma(E);\,\wedge,\,\vee,\,',\,0,\,1)$
is a Boolean algebra.
\end{theorem}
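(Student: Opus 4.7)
The plan is to extract closure of $\Gamma(E)$ under $\wedge$ and the full Boolean structure from iterated uses of the two central decompositions $f_c$ and $f_d$, together with a subsequent regrouping of factors.

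Existence of $c \wedge d$ is immediate from Proposition~\ref{pr:central}(vi): the first coordinate of $f_c(d)$ is exactly $d \wedge c$. Iterating this observation, the four meets $e_1 := c \wedge d$, $e_2 := c \wedge d^{\sim}$, $e_3 := c^{\sim} \wedge d$, and $e_4 := c^{\sim} \wedge d^{\sim}$ all exist and satisfy $c = e_1 + e_2$, $c^{\sim} = e_3 + e_4$, and $c + c^{\sim} = 1$. Associativity then combines these into $e_1 + e_2 + e_3 + e_4 = 1$, so that, setting $w := e_2 + e_3 + e_4$, uniqueness in axiom~(ii) of the PEA definition forces $w = e_1^{\sim}$.

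To show $e_1 \in \Gamma(E)$, I would assemble the isomorphism of Definition~\ref{de:central} as a composite. Restricting $f_d$ to $E[0, c]$ and to $E[0, c^{\sim}]$ produces PEA-isomorphisms $E[0, c] \cong E[0, e_1] \times E[0, e_2]$ and $E[0, c^{\sim}] \cong E[0, e_3] \times E[0, e_4]$, so that, composed with $f_c$, one obtains $E \cong \prod_{i=1}^{4} E[0, e_i]$ via $x \mapsto (x \wedge e_1, x \wedge e_2, x \wedge e_3, x \wedge e_4)$. Regrouping the last three factors through the map $(y_2, y_3, y_4) \mapsto y_2 + y_3 + y_4$, which I claim is a PEA-iso onto $E[0, w] = E[0, e_1^{\sim}]$, then yields the required PEA-iso $f_{e_1} : E \to E[0, e_1] \times E[0, e_1^{\sim}]$ sending $e_1$ to $(e_1, 0)$ and $x$ to $\bigl(x \wedge e_1,\, (x \wedge e_2) + (x \wedge e_3) + (x \wedge e_4)\bigr)$.

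The Boolean structure then follows. Closure under $c \mapsto c\,'$ is immediate, since the coordinate swap of $f_c$ witnesses $c\,' \in \Gamma(E)$. Joins in $\Gamma(E)$ are supplied by De Morgan, $c \vee d := (c\,' \wedge d\,')\,'$, and a short computation using Proposition~\ref{pr:central}(viii) together with the four-fold decomposition above shows this is simultaneously the join in $E$. Distributivity---which upgrades this bounded complemented lattice to a Boolean algebra---follows by considering a third central element $e$ and reading off $a \wedge (b \vee c) = (a \wedge b) \vee (a \wedge c)$ from the eight-fold decomposition induced by $a$, $b$, $c$.

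The main technical obstacle is the regrouping step in the third paragraph, namely verifying that $(y_2, y_3, y_4) \mapsto y_2 + y_3 + y_4$ really is a PEA-iso from $E[0, e_2] \times E[0, e_3] \times E[0, e_4]$ onto $E[0, e_1^{\sim}]$. Bijectivity should fall to further uses of Proposition~\ref{pr:central}(vi) and~(x); the delicate point is the morphism property, which requires orthosums in $E[0, e_1^{\sim}]$ to split coordinatewise, and this rests on the pairwise disjointness of the $e_i$ (from parts~(iii) and~(viii)) together with the centrality of each $e_i$ inside the appropriate subinterval.
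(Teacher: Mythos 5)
The paper offers no proof of this theorem: it is imported verbatim from \cite[Theorem 2.3]{Dv03}, so there is no in-paper argument to compare yours against. Judged on its own terms, your outline follows what is essentially the standard route (the one used in \cite{Dv03} and, for effect algebras, in \cite{GFP}): compose $f_c$ with the restrictions of $f_d$ to get a four-fold product decomposition $E\cong\prod_{i=1}^{4}E[0,e_i]$, regroup three factors to exhibit $f_{e_1}$, then get the Boolean structure from De Morgan and multi-fold decompositions. The skeleton is sound, and the facts you lean on (Proposition~\ref{pr:central}(vi)--(viii)) are properties of a single central element, so there is no circularity in invoking them before closure under $\wedge$ is known.

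Two places need more than you give them. In the regrouping step, the justification you cite --- ``centrality of each $e_i$ inside the appropriate subinterval'' --- is not the right ingredient: the sum $y_2+y_3+y_4$ crosses the $c/c^{\sim}$ boundary ($e_2\leq c$ while $e_3,e_4\leq c^{\sim}$), so the coordinatewise splitting of orthosums requires the commutation law $x\leq c,\ y\leq c^{\sim}\Rightarrow x+y=y+x$ (and the same for $d$). Fortunately this is available directly from Definition~\ref{de:central}: apply the morphism $f_c^{-1}$ to $(x,0)+(0,y)=(x,y)=(0,y)+(x,0)$. With that in hand the regrouping map is indeed an isomorphism; for surjectivity onto $E[0,e_1^{\sim}]$ you also need $e_1\wedge e_1^{\sim}=0$, which does not come for free before $e_1$ is known to be central but can be extracted from $e_1\leq c\wedge d$ and $e_1^{\sim}\wedge d=e_3\leq c^{\sim}$. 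The Boolean-algebra paragraph is the thinnest part: you must show $(c\,'\wedge d\,')'$ is the supremum of $c$ and $d$ in all of $E$, not merely an upper bound among central elements (write $(c\,'\wedge d\,')'=c+(c\,'\wedge d)$ and use $u=c+(u\wedge c\,')$ for any upper bound $u$), and the ``read off distributivity from the eight-fold decomposition'' step silently uses that meets with a fixed element distribute over joins of orthogonal central elements. All of this is true and standard, but as written these are assertions rather than arguments.
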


If $c\in \Gamma(E)$, then the mapping $p_c:E\to E[0,c]$ defined by
$$
p_c(x):=x\wedge c\text{\ for all\ }x\in E
$$
is a morphism from $E$ onto $E[0,c]$ whose kernel is $E[0,c']$.

\begin{proposition}\label{pr:p_e}{\rm \cite[Proposition 2.6]{Dv03}}
Let $x\in E$ and $c,d\in \Gamma(E)$. Then:
\begin{enumerate}
\item $p_{c\wedge d}=p_cp_d=p_dp_c$.
\item If $c\wedge d=0$, then $c+d=c\vee d=d+c$ and $p_{c\vee d}(x)=
 p_c(x)+p_d(x)=p_d(x)+p_c(x)$.
\item If $d\leq c$, then $c\diagdown d=c\wedge d\,'=d\diagup c$ and
 $p_{c\wedge d\,'}(x)=p_c(x)\diagdown p_d(x)=p_d(x)\diagup p_c(x)$.
\end{enumerate}
\end{proposition}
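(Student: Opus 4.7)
For part (i), the claim $p_{c\wedge d}=p_c\circ p_d=p_d\circ p_c$ is essentially associativity of meet: Theorem \ref{th:central} places $c\wedge d$ in $\Gamma(E)$, Proposition \ref{pr:central}(vi) ensures every meet in sight exists, and a direct comparison of lower bounds yields $(x\wedge d)\wedge c=x\wedge(c\wedge d)=(x\wedge c)\wedge d$.

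Part (ii) is the crux, and its hard step is proving $c+d=d+c$ in the generally noncommutative $+$. The plan is to push the problem into the product $E[0,c]\times E[0,c']$ via the isomorphism $f_c$ of Definition \ref{de:central}. Since $c\wedge d=0$, Proposition \ref{pr:central}(viii) gives $d\leq c'$, and then parts (ii) and (iv) of Proposition \ref{pr:central} give $f_c(c)=(c,0)$ and $f_c(d)=(0,d)$. Coordinatewise one has $(c,0)+(0,d)=(0,d)+(c,0)=(c,d)=(c,0)\vee(0,d)$, and transporting back through $f_c^{-1}$ yields $c+d=d+c=c\vee d$. For the projection identity, the pairwise orthogonal central elements $c,d,(c\vee d)'$ sum to $1$, so Proposition \ref{pr:central}(x) decomposes $x$ as $p_c(x)+p_d(x)+p_{(c\vee d)'}(x)$, and also in the order $d,c,(c\vee d)'$ (legitimately, since $c+d=d+c$). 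Apply the morphism $p_{c\vee d}$, whose kernel is $E[0,(c\vee d)']$, and use (i) to reduce $p_{c\vee d}\circ p_c$ to $p_c$ and $p_{c\vee d}\circ p_d$ to $p_d$; both orderings of the decomposition then collapse to $p_{c\vee d}(x)=p_c(x)+p_d(x)=p_d(x)+p_c(x)$.

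For part (iii), $d\leq c$ makes $c\wedge d=d$. Applying Proposition \ref{pr:central}(x) to the orthogonal central pair $d,d'$ at the element $c$ yields $c=d+(c\wedge d')$, so $d\diagup c=c\wedge d'$; the commutativity from (ii) rewrites this as $c=(c\wedge d')+d$, forcing $c\diagdown d=c\wedge d'$. For the companion identity on projections, apply the morphism $p_c$ to the two-way decomposition $x=p_d(x)+p_{d'}(x)=p_{d'}(x)+p_d(x)$ coming from Proposition \ref{pr:central}(x) applied to $d,d'$ in both orders; by (i), $p_c\circ p_d=p_d$ and $p_c\circ p_{d'}=p_{c\wedge d'}$, so $p_c(x)=p_d(x)+p_{c\wedge d'}(x)=p_{c\wedge d'}(x)+p_d(x)$, whence $p_{c\wedge d'}(x)=p_d(x)\diagup p_c(x)=p_c(x)\diagdown p_d(x)$.
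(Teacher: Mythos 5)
The paper offers no proof of this proposition---it is imported from \cite[Proposition 2.6]{Dv03}---so there is nothing to match your argument against line by line; I can only assess it on its own terms, and on those terms it is essentially correct. Part (i) is the standard associativity-of-meets computation, part (iii) correctly reduces to Proposition \ref{pr:central}(x) applied to the pair $d,d\,'$ followed by the morphism $p_c$, and using $f_c$ to transport $c+d=d+c=c\vee d$ from the coordinatewise computation in $E[0,c]\times E[0,c^{\sim}]$ is a clean way to get the noncommutative heart of (ii). The one step you should shore up is the appeal to the triple $c,\,d,\,(c\vee d)'$ in (ii): writing $(c\vee d)'$ and $p_{c\vee d}$ presupposes $c\vee d\in\Gamma(E)$, which you have not established (you have only shown that $c\vee d=c+d=d+c$ exists in $E$). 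Within this paper that centrality is exactly Theorem \ref{th:sum-sup}(i), which you could quote, but that result already contains the identity $p_{c\vee d}(x)=p_c(x)+p_d(x)$ as a special case (and in \cite{Dv03} it is Proposition 2.7, proved \emph{after} the present statement), so leaning on it would make the second half of (ii) nearly vacuous. A self-contained fix: take the third summand to be $e:=c\,'\wedge d\,'\in\Gamma(E)$ (Theorem \ref{th:central}); since $d\leq c\,'$, Proposition \ref{pr:central}(x) gives $c\,'=(c\,'\wedge d)+(c\,'\wedge d\,')=d+e$, hence $1=c+c\,'=c+d+e$, and $e$ is orthogonal to both $c$ and $d$; the centrality of $c\vee d$ itself follows because $f_c$ carries it to $(c,d)$, which is central in $E[0,c]\times E[0,c^{\sim}]$ (the center of a product is the product of the centers, and $d\in\Gamma(E)[0,c\,']=\Gamma(E[0,c\,'])$ by Theorem \ref{th:subcentr}). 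With that inserted, the remainder of your argument goes through as written.
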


\begin{theorem}\label{th:sum-sup}{\rm \cite[Proposition 2.7]{Dv03}}
Let $c_1,c_2,\cdots+c_n\in \Gamma(E)$ with $c_i\wedge c_j=0$ for $i\neq j$.
Then:
\begin{enumerate}
\item $c:=\bigvee_{i=1}^n c_i=c_1+c_2+\cdots +c_n\in \Gamma(E)$, and
 for all $x\in E$,
$$
x\wedge c=\bigvee_{i=1}^n(x\wedge c_i)=x\wedge c_1+\cdots +x\wedge c_n.
$$
\item If $x_i\leq c_i$ for $i=1,2,\ldots,n$, then $x_1+x_2+\cdots +x_n
 =x_1\vee x_2\vee \cdots \vee x_n=x_{i_1}+x_{i_2}+\cdots +x_{i_n}$,
 where $(i_1,i_2,\ldots,i_n)$ is any permutation of $(1,2,\ldots,n)$.
\item If $a_1,a_2,\ldots a_n\in \Gamma(E)$, then for all $x\in E$,
$$
x\wedge(\bigvee_{i=1}^n a_i)=\bigvee_{i=1}^n(x\wedge a_i).
$$
\end{enumerate}
\end{theorem}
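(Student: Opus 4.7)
The plan is to establish (i) and (ii) together by induction on $n$, using Proposition \ref{pr:p_e}(ii) as the two-element atom, and then to deduce (iii) from (i) via a disjointification argument inside the Boolean algebra $\Gamma(E)$ supplied by Theorem \ref{th:central}.

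For the inductive step of (i), I would set $d := \bigvee_{i=1}^{n-1} c_i$ and invoke the inductive hypothesis to get $d = c_1 + \cdots + c_{n-1} \in \Gamma(E)$. Because $\Gamma(E)$ is Boolean and its meet agrees with the meet in $E$, distributivity in $\Gamma(E)$ yields $d \wedge c_n = \bigvee_{i<n}(c_i \wedge c_n) = 0$, so Proposition \ref{pr:p_e}(ii) delivers $d + c_n = d \vee c_n = \bigvee_{i=1}^n c_i \in \Gamma(E)$ together with $p_{d \vee c_n}(x) = p_d(x) + p_{c_n}(x)$; combined with the inductive formula for $p_d(x)$ this closes the $+$-formula in (i), and the $\vee$-formula follows once the $n$-fold case of (ii) is available. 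For (ii) I would first handle $n = 2$: if $x_1 \leq c_1$ and $x_2 \leq c_2$ with $c_1 \wedge c_2 = 0$, Proposition \ref{pr:central}(viii) gives $x_2 \leq c_1^{\sim}$, so under the product isomorphism $f_{c_1}$ of Definition \ref{de:central} one has $f_{c_1}(x_1) = (x_1, 0)$ and $f_{c_1}(x_2) = (0, x_2)$ by Proposition \ref{pr:central}(ii),(iv). Coordinatewise the sums in both orders exist and equal $(x_1, x_2)$, and Proposition \ref{pr:central}(vii) identifies the result with $x_1 \vee x_2$. For general $n$ the induction proceeds by noting that $y := x_1 + \cdots + x_{n-1} = x_1 \vee \cdots \vee x_{n-1}$ lies below $d$ while $x_n \leq c_n \leq d^{\sim}$, so the two-element case applied to the orthogonal central pair $(d, d^{\sim})$ finishes the job; permutation invariance propagates by applying the same two-element step to each adjacent transposition.

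For (iii), induction on $n$ reduces matters to $n = 2$. Given $a, b \in \Gamma(E)$, I would set $b_0 := b \wedge a\,' \in \Gamma(E)$, observe that $a \wedge b_0 = 0$ and $a \vee b_0 = a \vee b$ by Boolean distributivity in $\Gamma(E)$, and apply Proposition \ref{pr:p_e}(ii) together with (ii) of the present theorem to obtain $x \wedge (a \vee b) = (x \wedge a) + (x \wedge b_0) = (x \wedge a) \vee (x \wedge b_0)$. Since $x \wedge b_0 \leq x \wedge b$, any upper bound of $\{x \wedge a, x \wedge b\}$ also majorizes $x \wedge b_0$, forcing $(x \wedge a) \vee (x \wedge b) = (x \wedge a) \vee (x \wedge b_0) = x \wedge (a \vee b)$. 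The inductive step for general $n$ then follows by writing $\bigvee_{i=1}^n a_i = (\bigvee_{i=1}^{n-1} a_i) \vee a_n$ and applying the $n = 2$ case with $a := \bigvee_{i=1}^{n-1} a_i$, which belongs to $\Gamma(E)$ by Theorem \ref{th:central}.

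The main obstacle I anticipate is bookkeeping around the existence of the various suprema while respecting the logical order of the argument: one cannot invoke the distributive identity of (iii) for arbitrary $x \in E$ before it has been proved, which is why (i) must be threaded through in its $+$-formulation, and the conversion between $+$ and $\vee$ must be made only after the relevant case of (ii) is in hand.
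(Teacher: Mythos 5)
The paper does not actually prove this theorem: it is imported verbatim from Dvure\v{c}enskij \cite[Proposition 2.7]{Dv03}, so there is no in-paper proof to compare against. Judged on its own, your reconstruction is sound and uses exactly the tools the paper makes available (Proposition \ref{pr:central}, Theorem \ref{th:central}, Proposition \ref{pr:p_e}). The interleaving of the inductions for (i) and (ii) is handled correctly --- you need (i) at stage $n-1$ to know $d=c_1+\cdots+c_{n-1}\in\Gamma(E)$ before running the two-element commutation argument for (ii) at stage $n$, and you only convert $+$ to $\vee$ after the relevant case of (ii) is in hand --- and the disjointification $b_0:=b\wedge a\,'$ is the standard way to get (iii) from the orthogonal case. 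The one spot that deserves an explicit sentence is your silent identification of the Boolean join in $\Gamma(E)$ with the supremum in $E$: when you assert $a\vee b_0=a\vee b$ "by Boolean distributivity," the join on the right in statement (iii) is the supremum in $E$, so you should note that $a+b_0=a\vee_E b_0$ exists by Proposition \ref{pr:p_e}(ii) and dominates $b=(b\wedge a)\vee(b\wedge a\,')$ by Proposition \ref{pr:central}(vi)--(vii), whence it is the least upper bound of $\{a,b\}$ in all of $E$; a similar remark justifies applying distributivity to compute $d\wedge c_n=0$. With that footnote added, the argument is complete.
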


\begin{theorem}\label{th:theorem3.7} Suppose that $c_1,c_2,\ldots,c_n$
are pairwise orthogonal elements of $\Gamma(E)$ with $c_1+c_2+
\cdots+c_n=1$, let $X:=E[0,c_1]\times E[0,c_2]\times\cdots\times
E[0,c_n]$, and define $\Phi:X\to E$ by $\Phi(e_1,e_2,\ldots,e_n)
:=e_1+e_2+\cdots +e_n=e_1\vee e_2\vee \cdots \vee e_n$ for all
$(e_1,e_2,\ldots,e_n)\in X$. Then : {\rm(i)} $\Phi:X\to E$ is a
PEA-isomorphism. {\rm(ii)} If $e\in E$, then $\Phi^{-1}(e)=
(e\wedge c_1, e\wedge c_2,\ldots, e\wedge c_n)$.
\end{theorem}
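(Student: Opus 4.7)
The plan is to show $\Phi$ is a bijection with inverse $\Psi\colon E\to X$ given by $\Psi(e):=(e\wedge c_1,\ldots,e\wedge c_n)$, and then verify that both $\Phi$ and $\Psi$ preserve the unit and the partial operation $+$. This simultaneously establishes (i) and (ii).

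First, $\Phi$ is well-defined: for $(e_1,\ldots,e_n)\in X$ we have $e_i\leq c_i$, so Theorem~\ref{th:sum-sup}(ii) guarantees that $e_1+\cdots+e_n$ exists, equals $e_1\vee\cdots\vee e_n$, and is independent of the summation order. Proposition~\ref{pr:central}(x) immediately gives $\Phi(\Psi(e))=(e\wedge c_1)+\cdots+(e\wedge c_n)=e$ for every $e\in E$. For the reverse composite, suppose $\Phi(e_1,\ldots,e_n)=e$. Since the map $p_{c_i}\colon x\mapsto x\wedge c_i$ is a PEA-morphism, $e\wedge c_i=p_{c_i}(e_1+\cdots+e_n)=(e_1\wedge c_i)+\cdots+(e_n\wedge c_i)$. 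For $j\neq i$, orthogonality of $c_i,c_j$ gives $c_j\leq c_i'$, hence $e_j\leq c_i'$, and Proposition~\ref{pr:central}(viii) yields $e_j\wedge c_i=0$; for $j=i$, trivially $e_i\wedge c_i=e_i$. Therefore $e\wedge c_i=e_i$, which proves (ii) and shows $\Psi\circ\Phi=\mathrm{id}_X$.

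Clearly $\Phi(c_1,\ldots,c_n)=c_1+\cdots+c_n=1$ and $\Psi(1)=(c_1,\ldots,c_n)$, so units are preserved. For the partial operation, assume $(e_1,\ldots,e_n)+(f_1,\ldots,f_n)$ exists in $X$, i.e.\ each $e_i+f_i$ exists and lies in $E[0,c_i]$. Then $h:=\sum_i(e_i+f_i)$ and $g:=\sum_i e_i$ exist in $E$ by well-definedness, and $g\leq h$ because $e_i\leq e_i+f_i$ and joins preserve order. Applying $p_{c_i}$ to $h=g+(h\diagdown g)$ (and noting that any morphism preserves $\diagdown$, since $a\leq b$ implies $b=a+(b\diagdown a)$), we obtain $p_{c_i}(h\diagdown g)=(e_i+f_i)\diagdown e_i=f_i=p_{c_i}(\sum_j f_j)$ for every $i$; Proposition~\ref{pr:central}(x), applied to both $h\diagdown g$ and $\sum_j f_j$, then forces $h\diagdown g=\sum_j f_j$. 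Consequently $\Phi(e)+\Phi(f)=g+(h\diagdown g)=h=\Phi(e+f)$. Conversely, if $e+f$ exists in $E$, then $p_{c_i}(e+f)=(e\wedge c_i)+(f\wedge c_i)$ for each $i$, so $\Psi(e)+\Psi(f)$ exists coordinate-wise in $X$ and equals $\Psi(e+f)$.

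The main obstacle is the $+$-preservation step for $\Phi$: coordinate-wise existence of a sum in $X$ does not visibly imply that the orthosums $\sum_i e_i$ and $\sum_i f_i$ are themselves summable in $E$ in the required way. The trick is to first form the larger orthosum $h=\sum_i(e_i+f_i)$, observe $\sum_i e_i\leq h$, and then recover $\sum_i f_i$ as $h\diagdown\sum_i e_i$ by projecting via the central morphisms $p_{c_i}$ and invoking the rigidity of central decompositions supplied by Proposition~\ref{pr:central}(x).
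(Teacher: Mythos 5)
Your proof is correct and its overall skeleton coincides with the paper's: both construct $\Psi(e)=(e\wedge c_1,\ldots,e\wedge c_n)$, verify $\Phi\circ\Psi=\mathrm{id}$ via Proposition \ref{pr:central}(x) and $\Psi\circ\Phi=\mathrm{id}$ via the morphism property of the $p_{c_i}$, and reduce additivity of $\Psi$ to that of the $p_{c_i}$. The one genuinely different step is the additivity of $\Phi$, which is where most of the work lies. The paper argues directly: from $e_i\leq f_i^-$ (existence of $e_i+f_i$) and $e_i\leq c_i\leq c_j^-\leq f_j^-$ for $i\neq j$ it gets $\Phi(e)\leq\bigwedge_j f_j^-=\Phi(f)^-$, so $\Phi(e)+\Phi(f)$ exists, and then rearranges $(e_1+\cdots+e_n)+(f_1+\cdots+f_n)=(e_1+f_1)+\cdots+(e_n+f_n)$ using associativity together with the order-independence of $\Gamma$-orthogonal sums (Theorem \ref{th:sum-sup}(ii)). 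You instead form $h=\sum_i(e_i+f_i)$ first, note $g=\sum_i e_i\leq h$, and identify the residual $g\diagup h$ with $\sum_i f_i$ by projecting with the $p_{c_i}$, cancelling, and invoking the uniqueness of central decompositions from Proposition \ref{pr:central}(x). This buys you a proof that never has to commute any summands past one another, at the cost of leaning on cancellation and on the rigidity of central decompositions; the paper's version makes the existence of $\Phi(e)+\Phi(f)$ more transparent. One caution: with the paper's conventions $(b\diagdown a)+a=b=a+(a\diagup b)$, the element you want is $g\diagup h$ (the \emph{right} residual), not $h\diagdown g$; in a noncommutative PEA these differ, and likewise $f_i=e_i\diagup(e_i+f_i)$ rather than $(e_i+f_i)\diagdown e_i$. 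Your parenthetical remark shows you are consistently using $\diagdown$ for what the paper calls $\diagup$, so the argument itself is sound, but the notation should be corrected.
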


\begin{proof}  If $(e_1,e_2,\ldots, e_n)\in X$, then $e_1+e_2+\cdots
+e_n=e_1\vee e_2\vee \cdots \vee e_n$ by Theorem \ref{th:sum-sup}
(ii). Clearly, $\Phi(1)=\Phi((c_1,c_2,\ldots,c_n))=c_1+c_2+\cdots+c_n
=1$. Assume that $e:=(e_1,e_2,\ldots, e_n), f:=(f_1,f_2,\ldots,f_n)\in X$
are such that $(e_1,e_2,\ldots,e_n)+(f_1,f_2,\ldots,f_n)=(e_1+f_1,
e_2+f_2,\ldots e_n+f_n)$ exists in $X$. Then $\Phi((e_1,e_2,\ldots,e_n))
=e_1+e_2+\cdots +e_n=e_1\vee \cdots \vee e_n$, $\Phi((f_1,f_2,
\ldots,f_n))=f_1+f_2+\cdots +f_n=f_1\vee \cdots \vee f_n$. Since $e_i
+f_i$ exists for $i=1,2,\ldots,n$, we have $e_i\leq f_i^-$ for $i=1,2,
\ldots,n$, and for $i\neq j$, we have $e_i\leq c_i, f_j\leq c_j$,
whence $e_i\leq c_i\leq c_j^-\leq f_j^-$. Then $\Phi(e)=\Phi((e_1,\ldots,e_n))
=e_1\vee e_2\vee \cdots \vee e_n\leq f_1^-\wedge f_2^-\wedge\cdots
\wedge f_n^-=\Phi(f)^-$, so that $\Phi(e)+\Phi(f)$ exists. Moreover, by associativity and Theorem \ref{th:sum-sup} (ii),
$$
\Phi((e_1,\ldots, e_n))+\Phi((f_1,\ldots f_n))=(e_1+e_2+\cdots +e_n)
+(f_1+f_2+\cdots +f_n)=
$$

$$
(e_1+f_1)+(e_2+f_2)+\cdots+(e_n+f_n)=
\Phi((e_1,e_2,\ldots,e_n)+(f_1,f_2,\ldots,f_n)).
$$

This shows that $\Phi$ is additive. For each $e\in E$, define $\Psi:
E\to X$ by $\Psi(e):=(e\wedge c_1, e\wedge c_2,\ldots, e\wedge c_n)
=(p_{c_1}(e),\ldots, p_{c_n}(e))$. Clearly, $\Psi(1)=1$ in $X$, and
if $e+f$ exists, then $\Psi(e+f)=\Psi(e)+\Psi(f)$, since $p_{c_i}$
are morphisms for all $i$.  Then $\Phi \circ \Psi(e)=e\wedge c_1+
e\wedge c_2+\cdots +e\wedge c_n=e$ by Proposition \ref{pr:central}
(x). If $(e_i)_{i=1}^n\subseteq X$, then $\Psi \circ \Phi((e_i)_{i=1}^n)=
\Psi((e_1+e_2+\cdots +e_n))=(p_{c_i}(e_1+\cdots +e_n))_{i=1}^n=
(e_i)_{i=1}^n$, since $p_{c_i}$, $i=1,2,\ldots,n$ is a morphism, and
$e_i\leq c_j$ for $i=j$, while $e_i\leq c_j\,'$ if $i\neq j$. It
follows that $\Phi^{-1}=\Psi$, and $\Psi$ is a morphism, hence $\Phi$
is an isomorphism.
\end{proof}

\begin{theorem}\label{th:subcentr}{\rm \cite[Proposition 2.8]{Dv03}}
For all $c\in \Gamma(E)$, $\Gamma(E[0,c])=\Gamma(E)[0,c]$.
\end{theorem}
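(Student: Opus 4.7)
The stated equality amounts to the two inclusions $\Gamma(E)[0,c]\subseteq\Gamma(E[0,c])$ and $\Gamma(E[0,c])\subseteq\Gamma(E)[0,c]$, which I would prove separately.

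For the first inclusion, let $d\in\Gamma(E)$ with $d\le c$. By Proposition \ref{pr:p_e}(iii) the complement of $d$ in $E[0,c]$ is $d^{\sim_c}=c\wedge d'$. The natural candidate for the witnessing isomorphism is the restriction to $E[0,c]$ of the global $f_d:E\to E[0,d]\times E[0,d']$, which by Proposition \ref{pr:central}(vi) is $x\mapsto(x\wedge d,x\wedge d')$. For $x\le c$ one has $x\wedge d'\le c\wedge d'$, so the restriction has values in $E[0,d]\times E[0,c\wedge d']$; conversely if $a\le d$ and $b\le c\wedge d'$, Theorem \ref{th:sum-sup}(ii) gives $a+b=a\vee b\le c$, so the restriction is onto. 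Additivity, the image of the top, $f_d(d)=(d,0)$, and the decomposition $x=x_1+x_2$ all transfer directly from the corresponding properties of $f_d$ on $E$.

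For the harder inclusion, take $d\in\Gamma(E[0,c])$ and write $d':=d^{\sim_c}=d^{-_c}$. I would first compose
\[
E\xrightarrow{f_c}E[0,c]\times E[0,c']\xrightarrow{f_d\times\mathrm{id}}E[0,d]\times E[0,d']\times E[0,c']
\]
to get a three-fold PEA-iso $\phi$ with $\phi(x)=(x\wedge d,\,x\wedge d',\,x\wedge c')$. Set $d^*:=d'+c'$; this exists because $c'\le(d')^{\sim_E}=d+c'$. Using $d+d'=c=d'+d$ (Proposition \ref{pr:central}(vii) applied in $E[0,c]$) together with $c+c'=c'+c=1$, one obtains $d+d^*=1$ and $d^*+d=1$, where the equality $c'+d'=d'+c'$ is verified by transporting through $f_c$: in $E[0,c]\times E[0,c']$ both orders give the same pair $(d',c')$ coordinatewise. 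Hence $d^*=d^{\sim_E}$. Next I would show that $\sigma:E[0,d']\times E[0,c']\to E[0,d^*]$, $(b_1,b_2)\mapsto b_1+b_2$, is a PEA-iso: injectivity by applying $p_c$ and $p_{c'}$, surjectivity by writing any $z\le d^*$ as $(z\wedge c)+(z\wedge c')$ after checking $z\wedge c\le p_c(d^*)=p_c(d')+p_c(c')=d'$. Composing $\phi$ with $\mathrm{id}\times\sigma^{-1}$ then yields the desired iso $E\cong E[0,d]\times E[0,d^*]$; the conditions $f_d^E(d)=(d,0)$ and $x=x_1+x_2$ follow by unpacking $\phi$ and using Proposition \ref{pr:central}(vii) for $d$ inside $E[0,c]$ together with $x=(x\wedge c)+(x\wedge c')$ for $c$ in $E$.

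The main obstacle is the non-commutative bookkeeping in the second direction: every commutation such as $c'+d'=d'+c'$ or the additivity of $\sigma$ reduces to reordering a sum of an element $\le c$ with an element $\le c'$. All such reorderings can be dispatched uniformly by transporting through $f_c$, where coordinatewise sums are visibly independent of order. Thus the difficulty is organizational rather than conceptual, and once this commutation principle is acknowledged, the two inclusions combine to give $\Gamma(E[0,c])=\Gamma(E)[0,c]$.
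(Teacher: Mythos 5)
Your plan is correct, but note that the paper offers no proof of this statement at all: it is quoted from \cite[Proposition 2.8]{Dv03}, so there is nothing in the text to match your argument against. What you supply is a self-contained proof working directly from Definition \ref{de:central}: the easy inclusion by restricting the global $f_d$ (the codomain bookkeeping via $d^{\sim_c}=c\wedge d'$ from Proposition \ref{pr:p_e}(iii) is right, and the closure of the restricted product under $+$ follows from Theorem \ref{th:sum-sup}(ii) as you say), and the hard inclusion by splicing the three-fold decomposition $E\cong E[0,d]\times E[0,d^{\sim_c}]\times E[0,c']$ back into a two-fold one via $\sigma$. All the commutations you need ($c'+d^{\sim_c}=d^{\sim_c}+c'$, the additivity of $\sigma$ in both directions) do indeed reduce to reordering an element below $c$ with an element below $c'$, which Theorem \ref{th:sum-sup}(ii) handles; injectivity and surjectivity of $\sigma$ via $p_c,p_{c'}$ and the computation $d^*\wedge c=d^{\sim_c}$ also check out. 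One typo: the final composition should be with $\mathrm{id}\times\sigma$, not $\mathrm{id}\times\sigma^{-1}$, since $\sigma$ already points toward $E[0,d^*]$. For comparison, a shorter route inside this paper's own toolkit would be to wait for the intrinsic characterization of centrality (Theorem \ref{th:charcent}): its conditions (i)--(iii) localize cleanly to intervals, Lemma \ref{le:lemma3.13} with $p=c$ then gives the easy inclusion in one line, and the hard inclusion becomes a verification of three order-theoretic conditions for $d$ in $E$ rather than the construction of an explicit isomorphism. Your approach costs more bookkeeping but has the advantage of using only material stated before the theorem in the paper's ordering.
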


\begin{lemma}\label{le:lemma3.9} Suppose that $e\in E$, $(f_i)_{i\in I}
\subseteq E$, $e+f_i$ {\rm(}respectively, $f_i+e${\rm)} exists for all
$i\in I$, and $\bigvee _{i\in I}f_i$ exists in $E$. Then $\bigvee_{i
\in I}(e+f_i)$ {\rm(}respectively, $\bigvee_{i\in I}(f_i+e)${\rm)}
exists in $E$ and $e+\bigvee_{i\in I}f_i=\bigvee_{i\in I}(e+f_i)$
\rm{(} respectively, $\bigvee_{i\in I}f_i+e=\bigvee_{i\in I}
(f_i+e)${\rm)}.
\end{lemma}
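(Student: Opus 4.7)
The plan is to prove the assertion for $e + f_i$; the companion claim for $f_i + e$ follows by the same argument with $e^\sim$, $\diagup$ replaced by $e^-$, $\diagdown$ throughout. The backbone I need is the standard equivalence: $e + y$ exists in $E$ iff $y \leq e^\sim$. Indeed, $e + e^\sim = 1$ by definition, so if $y \leq e^\sim$, writing $e^\sim = y + w$ and using associativity (axiom~(i)) gives $1 = e + (y + w) = (e + y) + w$, whence $e + y$ exists. Conversely, if $(e + y) + z = 1$, then $e + (y + z) = 1$, and the uniqueness in axiom~(ii) forces $y + z = e^\sim$, so $y \leq e^\sim$.

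\emph{Existence and upper-bound property.} Since $e + f_i$ exists for every $i$, the preceding step gives $f_i \leq e^\sim$ for all $i$, hence $\bigvee_i f_i \leq e^\sim$, so $e + \bigvee_i f_i$ exists. Associativity also yields monotonicity of left-addition: if $a \leq b$ with $b = a + c$, then $e + b = (e + a) + c$, so $e + a \leq e + b$. Applying this with $a = f_i$ and $b = \bigvee_j f_j$ shows $e + f_i \leq e + \bigvee_j f_j$ for every $i$, i.e., $e + \bigvee_j f_j$ is an upper bound of $\{e + f_i\}_{i \in I}$.

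\emph{Least upper bound.} Let $h \in E$ satisfy $e + f_i \leq h$ for all $i$. Fixing any $i_0 \in I$ and using $e \leq e + f_{i_0} \leq h$ gives $e \leq h$, so $e \diagup h$ is defined. For each $i$, writing $h = (e + f_i) + k_i$ and applying associativity yields $h = e + (f_i + k_i)$. Comparing with $h = e + (e \diagup h)$ and invoking left cancellation in $E$ (equivalently, the uniqueness inherent in the $\diagup$ notation of Section~\ref{sc:PEAs}) gives $f_i + k_i = e \diagup h$, whence $f_i \leq e \diagup h$. Taking the supremum over $i$ produces $\bigvee_i f_i \leq e \diagup h$, and one final application of monotonicity yields
\[
 e + \bigvee_i f_i \;\leq\; e + (e \diagup h) \;=\; h,
\]
which, together with the upper-bound property, identifies $\bigvee_i (e + f_i) = e + \bigvee_i f_i$.

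The only mildly delicate point is the left-cancellation step used to pass from $h = e + (f_i + k_i)$ to $f_i + k_i = e \diagup h$; but this is guaranteed by the well-definedness of the $\diagup$ operation introduced in Section~\ref{sc:PEAs}, which itself rests on axioms~(i) and~(ii). Everything else is a routine unwinding of associativity.
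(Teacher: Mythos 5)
Your proof is correct and follows essentially the same route as the paper: establish existence of $e+\bigvee_i f_i$ from $f_i\leq e^{\sim}$, note it is an upper bound by monotonicity, and for any upper bound $r$ use $r=e+(e\diagup r)$ together with cancellation to get $f_i\leq e\diagup r$, hence $\bigvee_i f_i\leq e\diagup r$ and $e+\bigvee_i f_i\leq r$. The only difference is that you spell out the cancellation step that the paper leaves implicit, and you handle the $f_i+e$ case by symmetry rather than writing it out.
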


\begin{proof} Let $f:=\bigvee_{i\in I}f_i$. Assume that $e+f_i$
exists for all $i\in I$. Then $f_i\leq e^{\sim}$ for all $i\in I$,
so that $f\leq e^{\sim}$. Also $e+f_i\leq e+f$ for all $i\in I$.
Suppose that $r\in E$ and $e+f_i\leq r$ for all $i\in I$. It suffices
to prove that $e+f\leq r$. We have $e\leq e+f_i\leq r=e+(e\diagup r)$,
whence $f_i\leq e\diagup r$ for all $i\in I$, and it follows that
$f\leq e\diagup r$, hence $e+f\leq r$.
Now assume that $f_i+e$ exists for all $i\in I$. Then $f_i\leq e^-$,
whence $f\leq e^-$. Then $f_i+e\leq f+e$, and let $r\in E$ be such
that $f_i+e\leq r$ for all $i\in I$. Then $f_i\leq r\diagdown e$ for
all $i\in I$, whence $f\leq r\diagdown e$, and this implies $f+e
\leq r$.
\end{proof}

\begin{lemma}\label{le:lemma3.10} Suppose that $\phi :E\to E$ satisfies
the conditions $\phi(e)+f$ exists $\Rightarrow$ $e+\phi(f)$ exists,
and $f+\phi(e)$ exists $\Rightarrow$ $\phi(f)+e$ exists for all $e,f
\in E$. Then {\rm (i)} $\phi$ is order preserving. {\rm(ii)} If
$(e_i)_{i\in I}\subseteq E$ and $e:=\bigvee e_i$ exists in $E$, then
$\bigvee \phi(e_i)$ exists in $E$ and $\phi(e)=\bigvee_{i\in I}\phi(e_i)$.
\end{lemma}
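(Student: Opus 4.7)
My plan is to recast the existence of sums in terms of inequalities, using the basic fact that $x+y$ exists in $E$ iff $y\leq x^{\sim}$ iff $x\leq y^{-}$, together with the standard PEA facts that $e\leq f$ iff $f^{\sim}\leq e^{\sim}$ iff $f^{-}\leq e^{-}$. With this reformulation each of the two hypotheses reads as a transport rule for inequalities through $\phi$, and the whole proof becomes an exercise in chaining the two rules.

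For (i), let $e\leq f$, so $f^{\sim}\leq e^{\sim}$. I want to reach $\phi(e)\leq\phi(f)$, which is the same as the existence of $\phi(e)+\phi(f)^{\sim}$. To prime the argument I use the tautology $\phi(f)+\phi(f)^{\sim}=1$. Feeding this into the first hypothesis (with roles $f$ and $\phi(f)^{\sim}$) produces the existence of $f+\phi(\phi(f)^{\sim})$, hence $\phi(\phi(f)^{\sim})\leq f^{\sim}\leq e^{\sim}$, so $e+\phi(\phi(f)^{\sim})$ exists. The second hypothesis (with roles $e$ and $\phi(f)^{\sim}$) then yields the existence of $\phi(e)+\phi(f)^{\sim}$, i.e. $\phi(e)\leq\phi(f)$.

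For (ii), set $e:=\bigvee_{i}e_{i}$. By (i), $\phi(e)$ is an upper bound of $\{\phi(e_{i})\}_{i\in I}$. To show it is the least such, let $r$ be an arbitrary upper bound, so that $\phi(e_{i})+r^{\sim}$ exists for every $i$. The first hypothesis converts this into the existence of $e_{i}+\phi(r^{\sim})$, i.e. $\phi(r^{\sim})\leq e_{i}^{\sim}$ for every $i$. A short verification (using that $e_{i}\leq e$ and that $x\leq y$ iff $y^{\sim}\leq x^{\sim}$) shows that $^{\sim}$ carries the join $\bigvee_{i}e_{i}=e$ to the meet $\bigwedge_{i}e_{i}^{\sim}=e^{\sim}$; hence $\phi(r^{\sim})\leq e^{\sim}$ and $e+\phi(r^{\sim})$ exists. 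One final appeal to the second hypothesis gives the existence of $\phi(e)+r^{\sim}$, i.e. $\phi(e)\leq r$. Thus $\bigvee_{i}\phi(e_{i})$ exists and equals $\phi(e)$.

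The only real obstacle is the opening move in (i): the hypotheses only fire when $\phi$ already appears inside an orthosum, so one cannot enter the argument from the sum $e+(e\diagup f)=f$ directly. The trick is to bootstrap from the automatic sum $\phi(f)+\phi(f)^{\sim}$, which lets the first hypothesis manufacture an inequality involving $\phi(\phi(f)^{\sim})$ that can be transported via $e\leq f$ before the second hypothesis closes the loop. Once this bootstrap is identified, (ii) is just the pointwise version of (i) applied to an upper bound $r$, using only that $^{\sim}$ is an order anti-isomorphism.
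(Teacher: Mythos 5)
Your proof is correct and follows essentially the same route as the paper's: the same bootstrap from $\phi(f)+\phi(f)^{\sim}=1$ through the two hypotheses for (i), and the same transport of an upper bound $r$ via $\phi(e_i)+r^{\sim}$ for (ii). The only cosmetic difference is that in (ii) the paper reads the existence of $e_i+\phi(r^{\sim})$ as $e_i\leq(\phi(r^{\sim}))^{-}$ and passes to $e$ directly by the definition of supremum, whereas you read it as $\phi(r^{\sim})\leq e_i^{\sim}$ and invoke $(\bigvee_i e_i)^{\sim}=\bigwedge_i e_i^{\sim}$; both are valid.
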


\begin{proof} (i) Suppose $e\leq f$. Then $f^{\sim}\leq e^{\sim}$,
and as $\phi(f)+\phi(f)^{\sim}$ exists, then $f+\phi(\phi(f)^{\sim})$
exists $\Rightarrow$ $\phi(\phi(f)^{\sim})\leq f^{\sim}\leq e^{\sim}$
$\Rightarrow$ $e+\phi(\phi(f)^{\sim})$ exists $\Rightarrow$ $\phi(e)+
\phi(f)^{\sim}$ exists $\Rightarrow$  $\phi(e)\leq \phi(f)$.
(ii) Assume the hypothesis of (ii). As $e_i\leq e$, it follows from (i)
that $\phi(e_i)\leq \phi(e)$ for all $i\in I$. Suppose that $f\in E$
and $\phi(e_i)\leq f$ for all $i\in I$. Then $\phi(e_i)+f^{\sim}$ exists
$\Rightarrow$ $e_i+\phi(f^{\sim})$ exists $\Rightarrow$ $e_i\leq(\phi
(f^{\sim}))^-$ $\Rightarrow$ $e\leq (\phi(f^{\sim}))^-$ $\Rightarrow$
$e+\phi(f^{\sim})$ exists $\Rightarrow$ $\phi(e)+f^{\sim}$ exists
$\Rightarrow$  $\phi(e)\leq f$, proving (ii).
\end{proof}

\begin{theorem}\label{th:theorem3.11} Let $c\in \Gamma(E)$ and let
$(e_i)_{i\in I}$ be a family of elements of $E$. Then:

{\rm(i)} If $\bigvee_{i\in I}e_i$ exists in $E$, then $c\wedge
\bigvee_{i\in I}e_i=\bigvee_{i\in I}(c\wedge e_i)$.

{\rm(ii)} For every $e\in E$, $c=c\wedge e +c\wedge e^{\sim}$.
\end{theorem}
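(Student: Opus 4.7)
My plan is to build both parts on the PEA-isomorphism $f_c : E \to E[0,c] \times E[0,c^{\sim}]$ witnessing centrality of $c$ (Definition \ref{de:central}), combined with the explicit formula $f_c(x)=(x\wedge c,\,x\wedge c^{\sim})$ from Proposition \ref{pr:central}(vi) and the fact $f_c(y)=(y,0)$ for $y\le c$ from Proposition \ref{pr:central}(ii). This lets me convert every sum/existence question in $E$ into a transparent coordinate-wise question in the product PEA.

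For (ii), I would apply the morphism $f_c$ to the defining identity $e+e^{\sim}=1$. Since $f_c$ is a PEA-morphism, $f_c(1)=(c,c^{\sim})$, and sums in the product are computed coordinate-wise, we obtain
\[
(c\wedge e,\,c^{\sim}\wedge e)+(c\wedge e^{\sim},\,c^{\sim}\wedge e^{\sim})=(c,c^{\sim}).
\]
Reading off the first coordinate yields $(c\wedge e)+(c\wedge e^{\sim})=c$, first as an identity in $E[0,c]$ and hence in $E$.

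For (i), I would apply Lemma \ref{le:lemma3.10} to $\phi:=p_c$, $\phi(x):=x\wedge c$. The only thing to verify is its hypothesis: $\phi(e)+f$ exists $\Rightarrow$ $e+\phi(f)$ exists, and the right-sided analogue. Transporting both sides through $f_c$, and using Proposition \ref{pr:central}(ii) to see that $f_c(c\wedge e)=(c\wedge e,0)$ and $f_c(c\wedge f)=(c\wedge f,0)$, each existence assertion collapses to the single condition that $(c\wedge e)+(c\wedge f)$ exists in $E[0,c]$. The hypothesis therefore holds, and Lemma \ref{le:lemma3.10}(ii) delivers $c\wedge\bigvee_i e_i=\bigvee_i(c\wedge e_i)$. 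The only step demanding care is this bookkeeping with coordinate-wise sums; there is no deeper obstacle.
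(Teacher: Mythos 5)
Your proof is correct and follows essentially the same route as the paper: part (ii) applies the projection morphism to $e+e^{\sim}=1$ exactly as the paper does with $p_c$, and part (i) rests on the same key step of verifying the hypothesis of Lemma \ref{le:lemma3.10} for $\phi=p_c$. The only (harmless) difference is in that verification: the paper argues order-theoretically inside $E$, showing $e=(c\wedge e)\vee(c\,'\wedge e)\leq(c\wedge f)^-$ via Proposition \ref{pr:central}(vi)--(vii), whereas you transport everything through the isomorphism $f_c$ and observe that both existence conditions reduce to the existence of $(c\wedge e)+(c\wedge f)$ in $E[0,c]$ --- a slightly cleaner bookkeeping that reaches the same conclusion.
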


\begin{proof} (i) Define $\phi:E\to E$ by $\phi(e):=c\wedge e$
for all $e\in E$. Suppose $e,f\in E$ and assume that $\phi(e)+f$
exists. Then $c\wedge e\leq f^-\leq (c\wedge f)^-$. Also,
$c^-\wedge e\leq c^-\vee f^-=(c\wedge f)^-$, and by Proposition
\ref{pr:central} (vi) and (vii), $e=(c\wedge e)\vee(c\,'\wedge e)
\leq (c\wedge f)^-$, end consequently, $e+\phi(f)$ exists. Now assume
that $f+\phi(e)$ exists, then $c\wedge e\leq f^{\sim}\leq
(c\wedge f)^{\sim}$, and $c^{\sim}\wedge e\leq c^{\sim}\vee f^{\sim}
=(c\wedge f)^{\sim}$, and consequently $e=(c\wedge e)\vee (c\,'\wedge e)
\leq (c\wedge f)^{\sim}$, and so $\phi(f)+e$ exists. Therefore (i) follows
from Lemma \ref{le:lemma3.10}.

(ii) Put $e_1=e, e_2=e^{\sim}$. Then $e_1+e_2=1$, and  $c=p_c(e_1+e_2)
=p_c(e_1)+p_c(e_2)=c\wedge e+c\wedge e^{\sim}$.
\end{proof}

In the next theorem, we give an intrinsic characterization of central
elements. (For a similar result see \cite{YY}).
\begin{theorem}\label{th:charcent} An element $c$ in a PEA $E$ is
central if and only if the following properties are satisfied:
\begin{enumerate}
\item For all $a\in E$, there are $a_1,a_2\in E$, $a_1\leq c,\,
 a_2\leq c^{\sim}$ and $a=a_1+a_2$.
\item If $a,b\leq c$ {\rm(}respectively, $a,b\leq c^{\sim}${\rm)}
 and $a+b$ is defined, then $a+b\leq c$ \rm{(}respectively, $a+b
 \leq c^{\sim}${\rm)};
\item If $x,y\in E$, $x\leq c,\,y\leq c^{\sim}$, then $x+y=y+x$.
\end{enumerate}
\end{theorem}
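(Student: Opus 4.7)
The plan is to prove the two implications separately. The forward implication is essentially bookkeeping on the isomorphism from Definition \ref{de:central}, while the reverse implication requires constructing that isomorphism from scratch, with the main work being uniqueness of the decomposition promised by (i).

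For the forward direction, assume $c$ is central with $f_c$ as in the definition. Property (i) is essentially the definition combined with Proposition \ref{pr:central}(vi), which gives $a = (a\wedge c) + (a\wedge c^{\sim})$ with $a\wedge c\leq c$ and $a\wedge c^{\sim}\leq c^{\sim}$. For (ii), if $a,b\leq c$, then by Proposition \ref{pr:central}(ii) we have $f_c(a)=(a,0)$ and $f_c(b)=(b,0)$; if $a+b$ exists, additivity of $f_c$ gives $f_c(a+b)=(a+b,0)$, and since this lies in $E[0,c]\times E[0,c^{\sim}]$, we must have $a+b\leq c$ (and symmetrically for $c^{\sim}$). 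For (iii), the computations $f_c(x)+f_c(y)=(x,0)+(0,y)=(x,y)=f_c(y)+f_c(x)$ in the product PEA, together with the bijectivity of $f_c$ and the fact that its inverse is also a morphism, force $x+y=y+x$ in $E$.

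For the reverse direction, assume (i)--(iii). First I would establish $c\wedge c^{\sim}=0$: any $z\leq c,c^{\sim}$ makes $c+z$ exist (from $z\leq c^{\sim}$), and (ii) applied to $c,z\leq c$ yields $c+z\leq c$, so $c+z=c=c+0$ and cancellation gives $z=0$. The central technical step is then to prove uniqueness of the decomposition in (i). Given $a=a_1+a_2=b_1+b_2$ with the usual constraints, apply (i) to $a^{\sim}$ to obtain $a^{\sim}=s+t$ with $s\leq c$, $t\leq c^{\sim}$. Then $1=a+a^{\sim}=a_1+a_2+s+t$, and using (iii) to swap $a_2$ with $s$ (plus associativity) gives $1=(a_1+s)+(a_2+t)$, where by (ii), $a_1+s\leq c$ and $a_2+t\leq c^{\sim}$. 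The key observation is that such a decomposition of $1$ is forced: if $1=u+v$ with $u\leq c$, $v\leq c^{\sim}$, then $v=u^{\sim}\geq c^{\sim}$ (since $u\leq c$ reverses under $^{\sim}$) while $v\leq c^{\sim}$, so $u=c$, $v=c^{\sim}$. Hence $a_1+s=c$, which determines $a_1=c\diagdown s$; the same reasoning with $b_1$ gives $b_1=c\diagdown s=a_1$, and then $a_2=b_2$ follows by cancellation.

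With uniqueness in hand, define $f_c(a):=(a_1,a_2)$. Additivity is the same rearrangement trick: $a+b=a_1+a_2+b_1+b_2=(a_1+b_1)+(a_2+b_2)$ using (iii) to swap $a_2$ with $b_1$, and then (ii) to confirm $a_1+b_1\leq c$ and $a_2+b_2\leq c^{\sim}$. The candidate inverse $(x,y)\mapsto x+y$ is well-defined, since $x\leq c$ and $y\leq c^{\sim}$ together with $c^{\sim}\leq x^{\sim}$ guarantee existence of $x+y$, and the same rearrangement shows it is a morphism. Finally $f_c(c)=(c,0)$ is immediate, since $c=c+0$ is already of the required form.

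I expect the main obstacle to be uniqueness of the decomposition in (i): there is no obvious direct way to compare $a_1$ and $b_1$ when neither dominates the other, and the trick of lifting to a decomposition of $1$ via $a^{\sim}$ (and then exploiting the built-in uniqueness from axiom (2) of a PEA) is the crux of the whole argument. Once that is through, the rest is a straightforward verification that the coordinate-swap permitted by (iii) transports additivity in both directions through $f_c$.
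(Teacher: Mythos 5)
Your proposal is correct and follows essentially the same route as the paper's proof: the forward direction is the same bookkeeping on $f_c$, and the reverse direction hinges on the same uniqueness argument obtained by lifting $a=a_1+a_2=b_1+b_2$ to a decomposition $1=(a_1+s)+(a_2+t)=c+c^{\sim}$ via a decomposition of $a^{\sim}$, followed by the same commutation trick for additivity and the same candidate inverse $(x,y)\mapsto x+y$. Your explicit justification that a decomposition $1=u+v$ with $u\leq c$, $v\leq c^{\sim}$ forces $u=c$, $v=c^{\sim}$ is a step the paper leaves implicit, and is a welcome addition.
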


\begin{proof} Observe first that (i)--(iii) imply that $c^{\sim}
=c^-$ and $c\wedge c^{\sim}=0$. Indeed, by (iii), $1=c+c^{\sim}
=c^{\sim}+c$, whence $c^{\sim}=c^-$. If $x\leq c,\,x\leq c^{\sim}$,
then by (ii), $c+x\leq c$, whence $x=0$.
If $c$ is central then property (i) follows by the definition of
central elements.\\ (ii): Let $a,b\leq c$, and $a+b$ exist. Then
$f_c(a)=(a,0),\,f_c(b)=(b,0)$ and $f_c(a+b)=(a,0)+(b,0)=(a+b,0)$,
hence $a+b\leq c$. Part (iii) follows by Theorem \ref{th:sum-sup}
(ii).

To prove the converse, define $f_c:E\to E[0,c]\times E[0,c^{\sim}]$
by $f_c(a)=(a_1,a_2)$ when $a=a_1+a_2,\,a_1\leq c,\,a_2\leq c^{\sim}$
by (i). We shall prove that $f_c$ satisfies Definition \ref{de:central} in the following steps.

(1) Assume that for $a\in E$, $a=a_1+a_2=b_1+b_2$ with $a_1,\,b_1\leq c,
\,a_2,b_2\leq c^{\sim}$ be two decompositions of $a$ by (i), and let
$a^{\sim}=d_1+d_2$, $d_1\leq c, d_2\leq c^{\sim}$ be any decomposition
of $a^{\sim}$. Then $1= a+a^{\sim}=(a_1+a_2)+(d_1+d_2)=a_1+(a_2+d_1)+
d_2$ by associativity. Since $a_2\leq c^{\sim}$ and $d_1\leq c$, we
obtain by (iii) that $a_2+d_1=d_1+a_2$. Again by associativity, $1=
(a_1+d_1)+(a_2+d_2)=c+c^{\sim}$, where $a_1+d_1\leq c$, $a_2+d_2
\leq c^{\sim}$ by (ii). It follows that $a_1+d_1=c$, $a_2+d_2=c^{\sim}$,
so $a_1=c\diagdown d_1$, $a_2=c^{\sim}\diagdown d_2$. Repeating this
reasoning with $a_1,\,a_2$ replaced by $b_1,\,b_2$, we obtain $a_1=
b_1,\,a_2=b_2$. This proves that $f_c$ is well defined.

Clearly, $f_c(c)=(c,0)$ and if $x\in E$ with $f_c(x)=(x_1,x_2)$,
then $x=x_1+x_2$.
If $f_c(a)=f_c(b)$, then $(a_1,a_2)=(b_1,b_2)$, which implies $a_1=b_1,\,
a_2=b_2$, whence $a=b$. This shows that $f_c$ is injective.

(2) Let $a,b\in E$ be such that $a+b$ exists. Let $f_c(a)=(a_1,a_2)$,
$f_c(b)=(b_1,b_2)$, and $f_c(a+b)=(d_1,d_2)$. Then $a=a_1+a_2$, $b=b_1
+b_2$, $a+b=d_1+d_2$. If follows that $(a_1+a_2)+(b_1+b_2)=a_1+b_1+a_2
+b_2=d_1+d_2$, using (iii). By (1) then, $d_1=a_1+b_1, d_2=a_2+b_2$.
Therefore $f_c(a+b)=(d_1,d_2)=(a_1+b_1,a_2+b_2)=(a_1,a_2)+(b_1,b_2)=
f_c(a)+f_c(b)$. This proves that $f_c$ is additive.

(3) Assume that $f_c(a)+f_c(b)$ exists in $E[0,c]\times E[0,c^{\sim}]$.
Then $(a_1,a_2)+(b_1,b_2)=(a_1+b_1, a_2+b_2)$, whence $a_1+b_1,\,a_2+b_2$
exist in $E$, and hence $(a_1+b_1)+(a_2+b_2)=a_1+a_2+b_1+b_2=a+b$. It
follows that $a+b$ exists iff $f_c(a)+f_c(b)$ exists.
To prove surjectivity, let $(x,y)\in E[0,c]\times E[0,c^{\sim}]$. Put
$z=x+y$, then $f_c(z)=(x,y)$.

Paerts (1),(2) and (3) imply that $f_c$ is a bijection such that $f_c(a)+
f_c(b)$ exists iff $a+b$ exists, and $f_c(a+b)=f_c(a)+f_c(b)$, hence
it is an isomorphism.
\end{proof}

\begin{lemma}\label{le:lemma3.13} If $p\in E$,  then: {\rm(i)} $c\in \Gamma(E)$
$\Rightarrow$ $p\wedge c\in \Gamma(E[0,p])$. {\rm(ii)} The mapping
$c\mapsto p\wedge c$ for $c\in \Gamma(E)$ is a boolean homomorphism
of $\Gamma(E)$ into the center $\Gamma(E[0,p])$ of $E[0,p]$.
\end{lemma}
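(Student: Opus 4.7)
My plan is to prove (i) by verifying the intrinsic characterization of centrality from Theorem~\ref{th:charcent}, then derive (ii) by a direct check that the assignment respects the Boolean operations.

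The key preliminary computation is that $(p\wedge c)^{\sim_p}=p\wedge c\,'$. Indeed, applying Proposition~\ref{pr:central} (vi)--(vii) with $x=p$ gives $p=(p\wedge c)+(p\wedge c^{\sim})$ with $p\wedge c^{\sim}\leq p$, so $p\wedge c^{\sim}=p\wedge c\,'$ is precisely the unique element complementing $p\wedge c$ in $E[0,p]$; hence $(p\wedge c)^{-_p}=(p\wedge c)^{\sim_p}=p\wedge c\,'$. With this in hand I would verify the three conditions of Theorem~\ref{th:charcent} for $p\wedge c$ inside $E[0,p]$. For condition~(i), any $a\in E[0,p]$ decomposes as $a=a\wedge c+a\wedge c\,'$ by Proposition~\ref{pr:central} (vii), and $a\leq p$ forces $a\wedge c\leq p\wedge c$ and $a\wedge c\,'\leq p\wedge c\,'=(p\wedge c)^{\sim_p}$. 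For condition~(ii), if $a,b\leq p\wedge c$ and $a+_p b$ exists, then applying condition~(ii) of Theorem~\ref{th:charcent} to $c$ in $E$ gives $a+b\leq c$; since $a+b\leq p$ as well, we get $a+_p b\leq p\wedge c$ (and analogously for $p\wedge c\,'$). For condition~(iii), if $x\leq p\wedge c$ and $y\leq p\wedge c\,'=p\wedge c^{\sim}$, then $x\leq c$ and $y\leq c^{\sim}$, so $x+y=y+x$ by condition~(iii) of Theorem~\ref{th:charcent} applied to $c$ in $E$, and this equality transfers verbatim to $E[0,p]$.

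For (ii), I would check preservation of $0,1,\wedge,'$ (from which $\vee$ follows by de Morgan, since the codomain is a Boolean algebra by Theorem~\ref{th:central}). The constants: $p\wedge 0=0$ and $p\wedge 1=p$, where $p$ is the unit of $E[0,p]$. Complements: $(p\wedge c)^{\prime_{E[0,p]}}=(p\wedge c)^{\sim_p}=p\wedge c\,'$ by the preliminary computation. Meets: I would show $p\wedge(c\wedge d)=(p\wedge c)\wedge(p\wedge d)$ purely order-theoretically---the left side is a lower bound of $p\wedge c$ and $p\wedge d$, and conversely any common lower bound of the two is below $p$, $c$, and $d$ and hence below $p\wedge(c\wedge d)$ (which exists by Theorem~\ref{th:central} and Proposition~\ref{pr:central} (vi)). This meet in $E$ coincides with the meet in $E[0,p]$, and hence with the meet in the Boolean algebra $\Gamma(E[0,p])$ by part~(i).

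I do not expect a serious obstacle: once the formula $(p\wedge c)^{\sim_p}=p\wedge c\,'$ is established, everything reduces to applying Theorem~\ref{th:charcent} to the ambient central element $c$ and restricting, plus elementary order-theoretic manipulations. The only place requiring a little care is distinguishing the partial operation $+_p$ on $E[0,p]$ from $+$ on $E$ when transferring condition~(ii) of Theorem~\ref{th:charcent}, but since $+_p$ agrees with $+$ whenever both operands and their sum lie in $E[0,p]$, this is purely bookkeeping.
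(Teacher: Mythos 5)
Your proof is correct and follows essentially the same route as the paper: part (i) is established by verifying the intrinsic characterization of Theorem~\ref{th:charcent} for $p\wedge c$ in $E[0,p]$ after computing $(p\wedge c)^{\sim_p}=(p\wedge c)^{-_p}=p\wedge c\,'$, and part (ii) reduces to the projection identities of Proposition~\ref{pr:p_e} (which you simply spell out more explicitly). The only point worth making explicit is that equating the left and right complements in $E[0,p]$ uses the commutation $p\wedge c+p\wedge c\,'=p\wedge c\,'+p\wedge c$ from Theorem~\ref{th:sum-sup}(ii), which your condition~(iii) already supplies.
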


\begin{proof} (i) Let $e\in E[0,p]$. Then $e=e\wedge c+e\wedge c\,'
=e\wedge p\wedge c+e\wedge p\wedge c\,'$.
From $p=p\wedge c+p\wedge c\,'=p\wedge c
\,'+p\wedge c$, we find that $p\wedge c\,'=(p\wedge c)\diagup p=p
\diagdown (p\wedge c)$. This implies that $(p\wedge c)^{\sim_p}=
(p\wedge c)^{-_p}=p\wedge c\,'$. Moreover, for every $e\in E[0,p]$
we have a decomposition $e=e_1+e_2$, $e_1\leq p\wedge c,\, e_2\leq p
\wedge c\,'=(p\wedge c)\diagup p$. If $e,f\leq p\wedge c$,
and $e+f$ exists in $E[0,p]$, then $e+f\leq p$, and $e,f\leq c$
implies $e+f\leq c$, hence $e+f\leq p\wedge c$. The same argument
holds if $e,f\leq (p\wedge c)\sp{\prime\sb{p}} :=p\wedge c\,'$. If
$x\leq p\wedge c,\, y\leq p\wedge c\,'$, then $x\leq c,\,y\leq c\,'$
implies that $x+y=y+x$. This proves that $p\wedge c\in\Gamma(E[0,p])$.
(ii) Part (ii) follows from Proposition \ref{pr:p_e}.
\end{proof}

\section{Centrally orthocomplete PEAs} \label{sc:COPEAs} 
\begin{definition}\label{de:gammaorthog} Two elements $p,q\in E$
are said to be \emph{$\Gamma$-orthogonal} iff there are orthogonal
central elements $c,\,d\in\Gamma(E)$ such that $p\leq c$ and $q
\leq d$. A family $(e_i)_{i\in I}$ is \emph{$\Gamma$-orthogonal}
iff there is a pairwise orthogonal family of elements
$(c_i)_{i\in I}\subseteq\Gamma(E)$ of central elements in $E$ such
that $e_i\leq c_i$ for all $i\in I$.
\end{definition}

Observe that, owing to Theorem \ref{th:sum-sup} (ii), if $e_1,\,e_2,
\ldots e_n$ are pairwise $\Gamma$-orthogonal elements, then their
orthosum exists and does not depend on the order of its summands;
moreover, $\sum_{i=}^n e_{i}=e_1+e_2+\cdots+e_n=e_1\vee e_2
\vee \cdots \vee e_n$.

\begin{definition} \label{df:COPEA}
Let $(e_i)_{i\in I}$ be a $\Gamma$-orthogonal family in $E$ and
let ${\mathcal F}$ be the collection of all finite subsets of the
indexing set $I$. Then $(e_i)_{i\in I}$ is \emph{orthosummable} iff
$$
\sum_{i\in I}e_i :=\bigvee\sb{F\in{\mathcal F}}\sum_{i\in F}e\sb{i}
$$
exists in $E$, in which case we refer to $\sum_{i\in I}e_i$ as the
\emph{orthosum} of the family. By definition, $E$ is a \emph{centrally
orthocomplete pseudo-effect algebra} (COPEA) iff every $\Gamma$-orthogonal
family in $E$ is orthosummable.
\end{definition}

\begin{lemma}\label{le:lemma4.3} {\rm (i)} If $e$ and $f$ are
$\Gamma$-orthogonal elements of $E$, then $e\leq f \ \Rightarrow \ e=0$.
{\rm(ii)} A family of central elements is $\Gamma$-orthogonal iff it is
pairwise orthogonal iff it is pairwise disjoint. {\rm(iii)} Every finite
$\Gamma$-orthogonal family in $E$ is orthosummable and its orthosum is
its supremum in $E$. {\rm(iv)} An arbitrary $\Gamma$-orthogonal family
in $E$ is orthosummable iff it has an orthosum iff it has a supremum in
$E$, and if it is orthosummable, then its orthosum coincides with its
supremum. {\rm(v)} $E$ is a COPEA iff every $\Gamma$-orthogonal family
in $E$ has a supremum in $E$.
\end{lemma}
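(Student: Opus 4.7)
The plan is to handle the five parts in order, with the unifying observation that for any $\Gamma$-orthogonal family, Theorem~\ref{th:sum-sup}(ii) collapses the orthosum of any finite subfamily to its supremum. No new machinery will be required; each part is a short application of the results of Section~\ref{sc:Central} together with the definitions of $\Gamma$-orthogonality and orthosummability.

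For (i), I would exploit that the centrals $c,d$ witnessing the $\Gamma$-orthogonality of $e,f$ satisfy $c \wedge d = 0$ in $\Gamma(E)$; then $e \le f \le d$ together with $e \le c$ forces $e \le c \wedge d = 0$. For (ii), the equivalence of \emph{pairwise orthogonal} and \emph{pairwise disjoint} for centrals is immediate from the definition of orthogonality in $\Gamma(E)$; the implication $\Gamma$-orthogonal $\Rightarrow$ pairwise disjoint follows from $c_i \le d_i$ combined with $d_i \wedge d_j = 0$; and the converse is trivial by choosing the witness family to be the $c_i$'s themselves.

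Parts (iii), (iv), and (v) form the core. In (iii), I apply Theorem~\ref{th:sum-sup}(ii) to the witnesses of a finite $\Gamma$-orthogonal family to conclude that the orthosum equals the supremum, and note that the orthosummability condition of Definition~\ref{df:COPEA} reduces in the finite case to the existence of this single sum. In (iv), I would combine (iii) with a standard cofinality argument: each finite partial sum $\sum_{i\in F} e_i$ coincides with the corresponding finite supremum $\bigvee_{i\in F} e_i$, and since every element of the family appears in a singleton finite subset, the collection of finite partial sums and the original family share the same set of upper bounds in $E$; hence one has a supremum iff the other does, and the two suprema coincide. Tying back to Definition~\ref{df:COPEA} yields the triple equivalence. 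Finally, (v) is an immediate restatement of the COPEA condition via (iv).

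The only step warranting real care is the cofinality argument in (iv), where one must explicitly verify that the net of finite partial sums and the original family share their set of upper bounds; this is a routine poset fact, but easy to leave implicit. Everything else is bookkeeping against Proposition~\ref{pr:central}, Theorem~\ref{th:sum-sup}, and the definitions.
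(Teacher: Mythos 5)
Your proposal is correct and follows essentially the same route as the paper: part (i) reduces to $e\leq c\wedge d=0$ (the paper phrases it as $e\leq c\wedge c\,'=0$ via Proposition \ref{pr:central}), part (ii) is definitional, part (iii) is Theorem \ref{th:sum-sup}(ii), and parts (iv)--(v) follow from (iii) and Definition \ref{df:COPEA}. Your explicit cofinality/upper-bound argument in (iv) is exactly the detail the paper leaves implicit in ``follows by (iii) and the definition of the orthosum.''
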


\begin{proof} (i) If $e,f\in E$ and $c,d\in \Gamma(E)$ with $e\leq c$
and $f\leq d\leq c\,'$, then $e\leq f \ \Rightarrow \ e\leq c\wedge c\,'
=0$  by Proposition \ref{pr:central} (iii) and (v).
(ii) Follows directly from the definitions of $\Gamma$-orthogonality and
orthogonality of central elements.
(iii) Follows from Theorem \ref{th:sum-sup} (ii).
(iv) Follows by (iii) and the definition of the orthosum.
(v) Follows from (iv).
\end{proof}

\begin{assumption}
In the sequel, we assume that $E$ is a COPEA.
\end{assumption}

\begin{theorem}\label{th:theorem4.5} Let $(c_i)_{i\in I}$ be a pairwise
orthogonal family of elements in $\Gamma(E)$, and let $(e_i)_{i\in I}$,
$(f_i)_{i\in I}$ be families in $E$ such that $e_i,f_i\leq c_i$ and
$e_i+f_i$ exists for all $i\in I$. Then:
{\rm(i) } $c:=\sum_{i\in I}c_i=\bigvee_{i\in I} c_i$, $e:=\sum_{i
\in I}e_i=\bigvee_{i\in I}e_i\leq c$, $f:=\sum_{i\in I}f_i=\bigvee_{i
\in I}f_i\leq c$, and $e+f$ exists.
{\rm(ii)} $e+f=\sum_{i\in I}(e_i+f_i)=\bigvee_{i\in I}(e_i+f_i)\leq c$.
\end{theorem}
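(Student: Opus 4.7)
The plan is to handle (i) and (ii) together in three steps: verify $\Gamma$-orthogonality and orthosummability for the four families involved; establish existence of $e+f$ by two cascaded applications of Lemma \ref{le:lemma3.9}; and finally collapse the resulting iterated join to $\bigvee_i(e_i+f_i)$. First I check that each of $(c_i)$, $(e_i)$, $(f_i)$, $(e_i+f_i)$ is $\Gamma$-orthogonal. For $(c_i)$ this is Lemma \ref{le:lemma4.3}(ii); for $(e_i)$ and $(f_i)$ the $c_i$'s themselves serve as the central witnesses since $e_i,f_i\le c_i$; and for $(e_i+f_i)$ I invoke the absorption clause of Theorem \ref{th:charcent}(ii) with $c=c_i$ to conclude $e_i+f_i\le c_i$. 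Because $E$ is a COPEA, Lemma \ref{le:lemma4.3}(iv) then delivers all four orthosums and identifies each with the corresponding supremum: $c=\sum c_i=\bigvee c_i$, $e=\sum e_i=\bigvee e_i$, $f=\sum f_i=\bigvee f_i$, and (for later use) $\sum_i(e_i+f_i)=\bigvee_i(e_i+f_i)$. The bounds $e,f\le c$ follow from $e_i,f_i\le c_i\le c$.

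Next, I show that every cross-sum $e_i+f_j$ exists. The diagonal case $i=j$ is a hypothesis. For $i\ne j$, Proposition \ref{pr:central}(viii) converts $c_i\wedge c_j=0$ into $c_i\le c_j^-$; since $^-$ is order-reversing, $f_j\le c_j$ yields $c_j^-\le f_j^-$, so $e_i\le c_i\le f_j^-$ and $e_i+f_j$ exists. I then apply Lemma \ref{le:lemma3.9} twice. For each fixed $j$, the right-addition version of the lemma applied to the family $(e_i)_i$ with right summand $f_j$ gives $e+f_j=\bigl(\bigvee_i e_i\bigr)+f_j=\bigvee_i(e_i+f_j)$; in particular $e+f_j$ exists for every $j$. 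A second application---this time the left-addition version, with $e$ fixed as the left summand and varying $f_j$---gives $e+f=e+\bigvee_j f_j=\bigvee_j(e+f_j)$, which establishes existence of $e+f$ and completes (i).

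To identify $e+f$ with $\bigvee_i(e_i+f_i)$, observe that for $i\ne j$ the pair $e_i,f_j$ is $\Gamma$-orthogonal via the witnesses $c_i,c_j$, so Theorem \ref{th:sum-sup}(ii) gives $e_i+f_j=e_i\vee f_j$. Substituting into the formula for $e+f_j$ produced in Step~2 yields $e+f_j=(e_j+f_j)\vee\bigvee_{i\ne j}e_i$, and taking the join over $j$ absorbs each stray $e_i$ into the corresponding $e_i+f_i$, leaving $e+f=\bigvee_j(e_j+f_j)=\sum_j(e_j+f_j)$ by Step~1; the bound $\le c$ follows from $e_j+f_j\le c_j\le c$. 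The main obstacle is Step~2: because Lemma \ref{le:lemma3.9} is phrased one variable at a time, the combinatorial fact that every cross-sum $e_i+f_j$ exists must be checked by hand---combining the hypothesis on the diagonal with orthogonality of central supports off the diagonal---before the lemma can be chained. The subsequent join manipulation in Step~3 is then routine absorption.
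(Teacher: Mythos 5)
Your proof is correct and follows essentially the same route as the paper's: both rest on the $\Gamma$-orthogonality of $(e_i+f_i)_{i\in I}$ via Theorem \ref{th:charcent}(ii), a cascaded double application of Lemma \ref{le:lemma3.9} over the index pairs $(i,j)$, and collapsing the off-diagonal cross terms $e_i+f_j$ using orthogonality of the central supports. The only cosmetic differences are that you obtain the existence of $e+f$ as a by-product of the Lemma \ref{le:lemma3.9} cascade rather than from $e\leq\bigwedge_j f_j^-=f^-$ directly, and you absorb the cross terms via $e_i+f_j=e_i\vee f_j$ where the paper uses $e_s+f_t\leq(e_s+f_s)\vee(e_t+f_t)$.
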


\begin{proof} (i) Part (i) follows from parts (ii) and (iv) of Lemma
\ref{le:lemma4.3}. For instance, the existence of $e+f$ is proved as
follows. As $e_i+f_i$ exists for all $i\in I$, we have $e_i\leq f_i^-$.
If $i\neq j$, then $e_i\leq c_i, f_j\leq c_j$, $c_i\wedge c_j=0$,
hence $e_i+f_j$ exists, so that $e_i\leq f_j^-$. Then $e=\bigvee_{i
\in I}e_i\leq f_j^-$ $\forall j\in I$, whence $e\leq \bigwedge_{j
\in I}f_j^-=(\bigvee_{j\in I}f_j)^-=f^-$, hence $e+f$ exists.

(ii) If $i\in I$, then $e_i,f_i\leq c_i$ implies that $e_i+f_i\leq c_i$
by Theorem \ref{th:charcent} (ii). From this it follows that
$(e_i+f_i)_{i\in I}$ is a $\Gamma$-orthogonal family in $E$, so by
Lemma \ref{le:lemma4.3} (iv) and (v),
$$
\sum_{i\in I}(e_i+f_i)=\bigvee_{i\in I}(e_i+f_i)\leq \bigvee_{i\in I} c_i=c.
$$
By Lemma \ref{le:lemma3.9}, $e+f=(\bigvee_{s\in I}e_s)+f=\bigvee_{s\in I}
(e_s+f)$, and for each $s\in I$, $e_s+f=e_s+\bigvee_{t\in I}f_t=
\bigvee_{t\in I}(e_s+f_t)$, and so
$$
\bigvee_{i\in I}(e_i+f_i)\leq \bigvee_{s,t\in I}(e_s+f_t)=e+f.
$$
Suppose $s,t\in I$. If $s=t$, then $e_s+f_t=e_s+f_s\leq \bigvee_{i
\in I}(e_i+f_i)$. If $s\neq t$, then $e_s+f_t\leq (e_s+f_s)+(e_t+f_t)
=(e_s+f_s)\vee(e_t+f_t)$. Consequently,
$$
e+f=\bigvee_{s,t\in I}(e_s+f_t)\leq \bigvee_{i\in I}(e_i+f_i).
$$
Combining the results obtained above, we get (ii).
\end{proof}

\begin{corollary}\label{co:cor4.6} Let $(c_i)_{i\in I}$ be a
pairwise orthogonal family of elements in $\Gamma(E)$ and let $d
\in E$. Put $c:=\bigvee_{i\in I}c_i$, $e:=\bigvee_{i\in I}(d
\wedge c_i)$, and $f:=\bigvee_{i\in I}(d^{\sim}\wedge c_i)$. Then:
{\rm(i)} $e\leq d$, $f\leq d^{\sim}$, and $c=e+f$.
{\rm(ii)} If $d\in E[0,c]$, then $d=\sum_{i\in I}(d\wedge c_i)
 =\bigvee_{i\in I}(d\wedge c_i)$.
\end{corollary}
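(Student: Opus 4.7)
The plan is to derive both parts as essentially immediate consequences of the machinery already assembled, most notably Theorem \ref{th:theorem4.5} for part (i) and Theorem \ref{th:theorem3.11} for part (ii).

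For part (i), I would first note that since each $d\wedge c_i \leq d$ and each $d^{\sim}\wedge c_i \leq d^{\sim}$, the suprema $e$ and $f$ (which exist because $E$ is a COPEA and $(d\wedge c_i)_{i\in I}$, $(d^{\sim}\wedge c_i)_{i\in I}$ are $\Gamma$-orthogonal via the witnesses $(c_i)_{i\in I}$) satisfy $e\leq d$ and $f\leq d^{\sim}$. The heart of (i) is the identity $c=e+f$. The trick is to apply Theorem \ref{th:theorem3.11} (ii) \emph{inside each slice}: by that result, applied with $c$ replaced by the central element $c_i$ and the element $e$ replaced by $d$, one has $c_i=(c_i\wedge d)+(c_i\wedge d^{\sim})$ for every $i\in I$. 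Hence setting $e_i:=d\wedge c_i$ and $f_i:=d^{\sim}\wedge c_i$, the families $(e_i)$ and $(f_i)$ satisfy the hypotheses of Theorem \ref{th:theorem4.5}, with $e_i+f_i=c_i$. That theorem then delivers
\[
e+f=\sum_{i\in I}(e_i+f_i)=\sum_{i\in I}c_i=\bigvee_{i\in I}c_i=c,
\]
where the last two equalities use Lemma \ref{le:lemma4.3} (iii)--(iv) together with Theorem \ref{th:sum-sup} (i).

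For part (ii), assume $d\leq c$. Since $c\in\Gamma(E)$ (as the supremum/orthosum of pairwise orthogonal central elements, by Theorem \ref{th:sum-sup} (i)), Theorem \ref{th:theorem3.11} (i) yields
\[
d=d\wedge c=d\wedge \bigvee_{i\in I}c_i=\bigvee_{i\in I}(d\wedge c_i)=e.
\]
The orthosum $\sum_{i\in I}(d\wedge c_i)$ equals $\bigvee_{i\in I}(d\wedge c_i)$ by Lemma \ref{le:lemma4.3} (iv), because $(d\wedge c_i)_{i\in I}$ is $\Gamma$-orthogonal, with the $c_i$ themselves serving as the witnessing central elements.

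There is no real obstacle; the only point requiring a small check is that the suprema appearing in the definitions of $e$ and $f$ actually exist, and this is where the standing COPEA hypothesis is used, via Lemma \ref{le:lemma4.3} (v), once one observes that both $(d\wedge c_i)_{i\in I}$ and $(d^{\sim}\wedge c_i)_{i\in I}$ are $\Gamma$-orthogonal thanks to the central witnesses $(c_i)_{i\in I}$. Everything else is direct bookkeeping using the identities already established in Sections \ref{sc:Central} and \ref{sc:COPEAs}.
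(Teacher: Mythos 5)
Part (i) of your proposal is correct and is essentially the paper's argument: set $e_i:=d\wedge c_i$, $f_i:=d^{\sim}\wedge c_i$, use Theorem \ref{th:theorem3.11}~(ii) to get $e_i+f_i=c_i$, and apply Theorem \ref{th:theorem4.5}~(ii). No issues there.

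Part (ii), however, has a genuine gap. You justify $d=d\wedge\bigvee_{i\in I}c_i=\bigvee_{i\in I}(d\wedge c_i)$ by appealing to the centrality of $c$ and to Theorem \ref{th:theorem3.11}~(i), but neither appeal works at this point in the development. First, Theorem \ref{th:sum-sup}~(i) gives $c=\bigvee_i c_i\in\Gamma(E)$ only for \emph{finite} families; for infinite pairwise orthogonal families the centrality of $c$ is precisely Theorem \ref{th:theorem4.7}~(1), whose proof \emph{uses} Corollary \ref{co:cor4.6}, so invoking it here would be circular. Second, even granting $c\in\Gamma(E)$, Theorem \ref{th:theorem3.11}~(i) distributes a single \emph{central} element over the join of an \emph{arbitrary} family; what you need is the opposite configuration, namely an arbitrary $d$ distributing over a join of central elements, and for infinite index sets this is exactly the nontrivial content of part (ii) (the finite case is Theorem \ref{th:sum-sup}~(iii)). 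The inequality $\bigvee_i(d\wedge c_i)\leq d\wedge c$ is easy; the reverse is what must be proved. The paper does this by a cancellation argument: with $e:=\bigvee_i(d\wedge c_i)\leq d\leq c$ one has $e+d^{\sim}=\bigvee_i\bigl((d\wedge c_i)+d^{\sim}\bigr)$ by Lemma \ref{le:lemma3.9}, each term computes to $c_i\vee d^{\sim}$ using centrality of $c_i$, so $e+d^{\sim}\geq c\vee c^{\sim}=1=e+e^{\sim}$, whence $e^{\sim}\leq d^{\sim}$ and $d\leq e$. You need some such argument to close part (ii).
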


\begin{proof} In Theorem \ref{th:theorem4.5}, let $e_i:=d\wedge c_i$
and $f_i:=d^{\sim}\wedge c_i$.
(i) As $e_i\leq d$ and $f_i\leq d^{\sim}$ for all $i\in I$, we get
$e=\bigvee_{i\in I}e_i\leq d$, and $f=\bigvee_{i\in I} f_i
\leq d^{\sim}$. By Theorem \ref{th:theorem3.11} (ii), $e_i+f_i=c_i$ for
all $i\in I$ , whence by Theorem \ref{th:theorem4.5} (ii), $e+f=
\bigvee_{i\in I}(e_i+f_i)=\bigvee_{i\in I} c_i=c$.
(ii) Assume that $d\in E[0,c]$. Then $e\leq d\leq c$ by (i). Thus
$e\leq (d^{\sim})^-$, hence $e+d^{\sim}$ exists, and $e+d^{\sim}=
\bigvee_{i\in I}(e_i+d^{\sim})$  by Lemma \ref{le:lemma3.9}. As $c_i\in
\Gamma(E)$, we have
$$
e_i+d^{\sim}=(d\wedge c_i)+d^{\sim}=(d\wedge c_i)+(d^{\sim}
  \wedge c_i)+(d^{\sim}\wedge c_i^{\sim})=c_i+(d^{\sim}
  \wedge c_i^{\sim})
$$

$$
=c_i\vee (d^{\sim}\wedge c_i^{\sim})=c_i\vee(d^{\sim}\wedge c_i)
\vee(d^{\sim}\wedge c_i^{\sim})= c_i\vee d^{\sim}, \text{\ so}
$$

$$
e+d^{\sim}=\bigvee_{i\in I}(d^{\sim}\vee c_i)\geq \bigvee_{i\in I}
(c^{\sim}\vee c_i)=c^{\sim}\vee c=1= e+e^{\sim}.
$$

By cancellation, $d^{\sim}\geq e^{\sim}$, whence $d\leq e$, and we
have $e=d$.
\end{proof}

\begin{theorem}\label{th:theorem4.7} {\rm(1)} Let $(c_i)_{i\in I}$ be a
pairwise orthogonal family of central elements, let $c:=\bigvee_{i
\in I} c_i$. Then $c\in \Gamma(E)$, and $\Gamma(E)$ is a complete
boolean algebra. {\rm(2)} For each $e\in E$ there is a smallest
element $d\in \Gamma(E)$ such that $e\leq d$.
\end{theorem}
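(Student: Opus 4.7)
For Part (1), the family $(c_i)$ is $\Gamma$-orthogonal by Lemma \ref{le:lemma4.3}(ii), so the COPEA hypothesis produces $c := \bigvee_i c_i \in E$. The plan is to verify conditions (i)--(iii) of the characterization Theorem \ref{th:charcent} and then bootstrap to arbitrary joins by Zorn's lemma. One preliminary observation is used throughout: because $x \mapsto x^-$ and $x \mapsto x^\sim$ are order-reversing bijections on $E$ that turn suprema into infima, $c^\sim = \bigwedge_i c_i^\sim = \bigwedge_i c_i^- = c^-$; denote this common value by $c'$, and note $c^\sim \leq c_i'$ for every $i$.

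Conditions (ii) and (iii) of Theorem \ref{th:charcent} go quickly. For $a, b \leq c$ with $a + b$ existing, Corollary \ref{co:cor4.6}(ii) gives $a = \bigvee_i(a \wedge c_i)$ and $b = \bigvee_i(b \wedge c_i)$; each pointwise sum $(a \wedge c_i) + (b \wedge c_i)$ exists (since $a \leq b^-$ forces $a \wedge c_i \leq (b \wedge c_i)^-$), and Theorem \ref{th:theorem4.5}(ii) yields $a + b = \bigvee_i((a\wedge c_i) + (b \wedge c_i)) \leq \bigvee_i c_i = c$. For $a, b \leq c^\sim$, each $c_i'$ is central and $a, b \leq c_i'$, so $a + b \leq c_i'$ for every $i$, hence $a + b \leq \bigwedge_i c_i' = c^\sim$. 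For condition (iii), write $x = \bigvee_i(x \wedge c_i)$; since $y \leq c^\sim \leq c_i'$, centrality of $c_i$ gives $(x \wedge c_i) + y = y + (x \wedge c_i)$, and Lemma \ref{le:lemma3.9} promotes the equality through the supremum to $x + y = y + x$.

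The main obstacle is condition (i): given $a \in E$, producing $a = a_1 + a_2$ with $a_1 \leq c$ and $a_2 \leq c^\sim$. Take $a_1 := \bigvee_i(a \wedge c_i)$ (orthosum, exists by COPEA) and $a_2 := a_1 \diagup a$; then $a_1 \leq c$ and $a = a_1 + a_2$, so the crux is verifying $a_2 \leq c^\sim$. Fix $i$. Regrouping the $\Gamma$-orthogonal family $\{a \wedge c_k\}_{k \in I}$ using $\{i\}$ and $I \setminus \{i\}$ (the second piece sits under $c_i'$ because $c_k \leq c_i'$ for $k \neq i$ by Proposition \ref{pr:central}(viii)), Lemma \ref{le:lemma4.3}(iii) yields $a_1 = (a \wedge c_i) + \bigvee_{k \neq i}(a \wedge c_k)$. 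Associativity turns $a = a_1 + a_2$ into $a = (a \wedge c_i) + \bigl(\bigvee_{k \neq i}(a \wedge c_k) + a_2\bigr)$; comparing with the central decomposition $a = (a \wedge c_i) + (a \wedge c_i')$ from Theorem \ref{th:theorem3.11}(ii) and left-cancelling gives $\bigvee_{k \neq i}(a \wedge c_k) + a_2 = a \wedge c_i' \leq c_i'$, so $a_2 \leq c_i'$. Intersecting over $i$ yields $a_2 \leq c^\sim$, completing the centrality of $c$.

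To upgrade to arbitrary joins, given $(d_j)_{j \in J} \subseteq \Gamma(E)$ apply Zorn's lemma to the set of pairwise orthogonal families $(g_\alpha) \subseteq \Gamma(E)$ with each $g_\alpha$ below some $d_{j_\alpha}$. A maximal such family has orthogonal join $g \in \Gamma(E)$ by the case just proved; if some $d_{j_0} \not\leq g$, then by centrality of $g$ and Theorem \ref{th:theorem3.11}(ii), $d_{j_0} \wedge g'$ is a nonzero central element orthogonal to all the $g_\alpha$ and below $d_{j_0}$, contradicting maximality. Any central upper bound for $(d_j)$ dominates each $c_\alpha$ and hence $g$, so $g = \bigvee_{\Gamma(E)} d_j$ and $\Gamma(E)$ is a complete Boolean algebra. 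For Part (2), take $\mathcal{S} := \{c \in \Gamma(E) : e \leq c\}$ (contains $1$) and $d := \bigwedge_{\Gamma(E)} \mathcal{S}$, so $d' = \bigvee_{\Gamma(E)}\{c' : c \in \mathcal{S}\}$. Applying the same Zorn construction to this join, $d' = \bigvee_\alpha g_\alpha$ with $(g_\alpha)$ pairwise orthogonal in $\Gamma(E)$ and each $g_\alpha \leq c_\alpha'$ for some $c_\alpha \in \mathcal{S}$. Corollary \ref{co:cor4.6}(ii) applied to $e \wedge d' \in E[0, d']$ gives $e \wedge d' = \bigvee_\alpha(e \wedge g_\alpha)$, and each summand vanishes since $e \wedge g_\alpha \leq c_\alpha \wedge c_\alpha' = 0$. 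Proposition \ref{pr:central}(viii) then yields $e \leq (d')^- = d$, as required.
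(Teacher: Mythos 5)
Your proof is correct and follows essentially the same route as the paper: both verify conditions (i)--(iii) of Theorem \ref{th:charcent} for the orthogonal join $c$ (with condition (i) reduced, as in the paper, to showing that the complementary summand of $a$ lies below every $c_i^{\sim}$), and both obtain completeness of $\Gamma(E)$ and the central cover from maximal pairwise orthogonal families via Zorn's lemma. The only local differences are cosmetic: you get condition (i) by a per-index cancellation instead of the paper's computation of $e\diagup d=\bigwedge_i(d\wedge c_i^{\sim})$ via Corollary \ref{co:cor4.6} (note the decomposition $a=(a\wedge c_i)+(a\wedge c_i^{\sim})$ you use is really Proposition \ref{pr:central}(vi)--(vii) rather than Theorem \ref{th:theorem3.11}(ii)), you spell out the completeness argument the paper delegates to Sikorski, and in part (2) you define $d$ as the infimum of the central upper bounds of $e$ rather than as the complement of a maximal orthogonal join inside $\Gamma(E)\cap E[0,e^{\sim}]$.
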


\begin{proof}  (1) We have to prove properties (i)--(iii) of Theorem
\ref{th:charcent} for $c$.

(i) Let $d\in E$. By Corollary \ref{co:cor4.6}, $c=e+f$, $e\leq d$,
$f\leq d^{\sim}$. Then $d=e+e\diagup d$, and $e\diagup d=\bigvee_{i
\in I}(d\wedge c_i)\diagup d\leq d\wedge c_i\diagup d$ for all $i
\in I$. Let $x\in E$ be such that $x\leq d\wedge c_i\diagup d$ for
all $i\in I$. Then $d\wedge c_i+x\leq d$, hence $d\wedge c_i\leq d
\diagdown x$, so $\bigvee_{i\in I}(d\wedge c_i)\leq d\diagdown x$,
and therefore $x\leq \bigvee_{i\in I}(d\wedge c_i)\diagup d$. This
proves that $\bigvee_{i\in I}(d\wedge c_i)\diagup d=\bigwedge_{i\in I}
(d\wedge c_i\diagup d)=\bigwedge_{i\in I}d\wedge c_i^{\sim}\leq
\bigwedge_{i\in I}c_i^{\sim}=(\bigvee_{i\in I}c_i)^{\sim}=c^{\sim}$.
Finally we obtain $d=e+e\diagup d$, $e\leq c,\,e\diagup d\leq c^{\sim}$.
(ii) Let $e,f\leq c$ and suppose $e+f$ exists. Then $e_i:=e\wedge c_i
\leq c_i$, $f_i:=f\wedge c_i\leq c_i$, $(e_i)_{i\in I},\,(f_i)_{i\in I}$
are  $\Gamma$-orthogonal, and $e_i+f_i$ exists for all $i\in I$. By
Theorem \ref{th:theorem4.5}, $e=\bigvee_{i\in I} e_i$, $f=\bigvee_{i
\in I}f_i$, and $e+f=\bigvee_{i\in I}(e_i+f_i)\leq c$.
Let $e,f\leq c^{\sim}$ and suppose $e+f$ exists. From $c^{\sim}=
(\bigvee_{i\in I}c_i)^{\sim}=\bigwedge_{i\in I}c_i^{\sim}$ we obtain
that $e,f\leq c_i^{\sim}$ for all $i\in I$, and since $c_i$ is central,
$e+f\leq c_i^{\sim}$ for all $i\in I$. It follows that $e+f\leq
\bigwedge_{i\in I}c_i^{\sim}=c^{\sim}$.
(iii) Let $x,y\in E$, $x\leq c$, $y\leq c^{\sim}$. Then $x\wedge c_i
\leq c_i$, $y\leq c^{\sim}\leq c_i^{\sim}$ for all $i\in I$, and $x
=\bigvee_{i\in I} x\wedge c_i$ by Theorem \ref{th:theorem3.11}. Since $c_i$
is central, we have $x\wedge c_i+y=y+x\wedge c_i$, and by Lemma
\ref{le:lemma3.9}, $x+y=\bigvee_{i\in I}(x\wedge c_i+y)=\bigvee_{i\in I}
(y+x\wedge c_i)=y+x$. This proves (iii).
Therefore $c\in \Gamma(E)$, and by \cite[\S 20.1]{Sikor}, $\Gamma(E)$ is
a complete boolean algebra.

(2) Put $f=e^{\sim}$. Using Zorn's lemma we choose a maximal pairwise
orthogonal family $(c_i)_{i\in I}$ in $\Gamma(E)\cap E[0,f]$. As $c_i
\leq f$ for all $i\in I$, we have $c:=\bigvee_{i\in I}c_i\leq f$, and
$c\in \Gamma(E)$ by part (i) of this proof. Then $d:=c^-=\bigwedge_{i
\in I}c_i^-$, and $e=f^-\leq c^-=d\in \Gamma(E)$. To show that $d$ is
the smallest element in $\Gamma(E)$ such that $e\leq d$, let $e\leq k
\in \Gamma(E)$. Then $k^{\sim}\leq e^{\sim}=f$, so $k^{\sim}\wedge d\in
\Gamma(E)\cap E[0,f]$. Then $k^{\sim}\wedge d\leq d=c^-\leq c_i^-=
c\sb{i}\sp{\,\prime}$ for all $i\in I$, hence $k^{\sim}\wedge d$ is
orthogonal to all $c_i, i\in I$, and by maximality of $(c_i)_{i\in I}$,
$k^{\sim}\wedge d=k\,'\wedge d=0$. Since $k,d\in \Gamma(E)$, $d\leq k$,
which proves (2).
\end{proof}

\begin{definition}\label{de:centcov} If $e\in E$, then the smallest
element $d\in \Gamma(E)$ such that $e\leq d$ (Theorem \ref{th:theorem4.7}
(2) is called the \emph{central cover} of $e$, and we shall denote
it by $\gamma e:=d$.
\end{definition}

In the following definition, we extend the notion of a hull mapping
\cite{FoPuC, FoPuHD} to pseudo-effect algebras.
\begin{definition}\label{de:hull} A mapping $\eta\colon E\to\Gamma(E)$
such that {\rm(1)} $\eta 0=0$, {\rm(2)} $e\in E\ \Rightarrow \ e\leq
\eta e$, and {\rm(3)} $e,f\in E\ \Rightarrow \ \eta(e\wedge \eta f)
=\eta e\wedge \eta f$ is called a \emph{hull mapping} on $E$.
\end{definition}

\begin{theorem}\label{th:centcov} The central cover mapping $\gamma
\colon E\to\Gamma(E)$ is a surjective hull mapping\footnote{In
\cite{FoPuC}, a surjective hull mapping from an effect algebra
$E$ onto $\Gamma(E)$ (which is unique if it exists) is called a
\emph{discrete hull mapping}.} on $E$.
\end{theorem}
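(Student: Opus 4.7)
The plan is to verify, in turn, the three axioms of a hull mapping from Definition \ref{de:hull} for $\gamma$, and then surjectivity. Items (1) and (2) come essentially for free: $0$ is itself central, hence the smallest central element above $0$ is $0$, giving $\gamma 0 = 0$; and $e \leq \gamma e$ is built into the definition of $\gamma e$ (Definition \ref{de:centcov}). Surjectivity is equally short: for any $c \in \Gamma(E)$, the element $c$ itself is central with $c \leq c$, so by minimality $\gamma c = c$, and the restriction of $\gamma$ to $\Gamma(E)$ is the identity.

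The substantive content is condition (3), the identity $\gamma(e \wedge \gamma f) = \gamma e \wedge \gamma f$. Writing $d := \gamma f$, so $d \in \Gamma(E)$ and $d' = d^{\sim} \in \Gamma(E)$, I would prove the two inequalities separately. For $\gamma(e\wedge d) \leq \gamma e \wedge d$, note that $\gamma e \wedge d$ belongs to $\Gamma(E)$ by Theorem \ref{th:central}, and clearly $e \wedge d \leq \gamma e \wedge d$; minimality of $\gamma(e\wedge d)$ as a central upper bound of $e\wedge d$ then forces the inequality.

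For the reverse inequality $\gamma e \wedge d \leq \gamma(e\wedge d)$, the key step is to exhibit a central upper bound of $e$ that meets $d$ inside $\gamma(e \wedge d)$. By Theorem \ref{th:theorem3.11}(ii) applied with the central element $d$, we have $e = (e\wedge d) + (e\wedge d')$, and by Theorem \ref{th:sum-sup}(ii) this sum equals $(e\wedge d) \vee (e\wedge d')$. Hence
\[
e \;\leq\; \gamma(e\wedge d) \,\vee\, \gamma(e\wedge d').
\]
Since $e\wedge d \leq d$ forces $\gamma(e\wedge d)\leq d$ (as $d$ itself is a central upper bound of $e\wedge d$), and similarly $\gamma(e\wedge d')\leq d'$, these two central hulls are orthogonal; by Theorem \ref{th:theorem4.7}(1) their join is again in $\Gamma(E)$, so it is a central upper bound of $e$, and therefore dominates $\gamma e$. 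Meeting both sides with $d$ and applying distributivity in the Boolean algebra $\Gamma(E)$ (Theorem \ref{th:central}) yields
\[
\gamma e \wedge d \;\leq\; (\gamma(e\wedge d)\wedge d) \,\vee\, (\gamma(e\wedge d')\wedge d) \;=\; \gamma(e\wedge d) \vee 0 \;=\; \gamma(e\wedge d),
\]
since $\gamma(e\wedge d')\wedge d \leq d'\wedge d = 0$. Combining the two inequalities gives the required identity.

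The main obstacle I anticipate is purely notational: making sure that each hull in the chain is handled as a \emph{central} element (so that Boolean-algebraic distributivity in $\Gamma(E)$ is available) and that the decomposition $e = (e\wedge d)+(e\wedge d^{\sim})$ really does produce a supremum as in Theorem \ref{th:sum-sup}(ii). Once those two ingredients are in place, the argument reduces to meet-join bookkeeping in the Boolean algebra $\Gamma(E)$.
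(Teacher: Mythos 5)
Your proof is correct and follows essentially the same route as the paper: the inequality $\gamma(e\wedge d)\leq\gamma e\wedge d$ by minimality, and the reverse by decomposing $e=(e\wedge d)\vee(e\wedge d^{\sim})$, passing to a central upper bound of $e$, and meeting with $d$ (the paper uses the slightly larger bound $\gamma(e\wedge d)\vee d\,'$ in place of your $\gamma(e\wedge d)\vee\gamma(e\wedge d\,')$, which changes nothing). One small correction: the decomposition $e=(e\wedge d)+(e\wedge d^{\sim})$ comes from Proposition \ref{pr:central} (vi)--(vii), not from Theorem \ref{th:theorem3.11}(ii), which decomposes the central element rather than $e$.
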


\begin{proof} Obviously, $\gamma 0=0$ and $e\leq \gamma e$ for all
$e\in E$. Let $e,f\in E$ and put $c:=\gamma f$. We have to prove that
$\gamma(e\wedge c)=\gamma e\wedge c$. Since $e\leq \gamma e$, we have
$e\wedge c\leq \gamma e\wedge c$, and hence $\gamma(e\wedge c)\leq
\gamma e\wedge c$. Since $c\in \Gamma(E)$, we have $e=(e\wedge c)
\vee(e\wedge c\,')\leq \gamma(e\wedge c)\vee c\,'\in \Gamma(E)$,
whence $\gamma e\leq \gamma(e\wedge c)\vee c\,'$. It follows that
$\gamma e\wedge c\leq \gamma(e\wedge c)\wedge c\leq \gamma(e\wedge c)$,
as desired. Since $\gamma(\gamma e)=\gamma(1\wedge \gamma e)=\gamma 1
\wedge\gamma e=\gamma e$, we obtain that $\gamma E:=\{\gamma e:e\in E\}
=\Gamma(E)$.
\end{proof}

\begin{lemma}\label{le:lemma4.11} Suppose that $(p_i)_{i\in I}\subseteq E$
is a $\Gamma$-orthogonal family in $E$. Let $p:=\bigvee_{i\in I}p_i$,
and let $c_i:=\gamma p_i$ for all $i\in I$ with $c=\bigvee_{i\in I} c_i$.
Then:
\begin{enumerate}
\item $p\leq\gamma p=c\in\Gamma(E)$.
\item $p\wedge c_i=p_i$ for all $i\in I$.
\item If $e\in E[0,p]$, then $e\wedge c_i=e\wedge p_i$ for all
 $i\in I$ and $e=\bigvee_{i\in I}(e\wedge p_i)$.
\end{enumerate}
\end{lemma}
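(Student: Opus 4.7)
The plan is to establish the three parts in order, leveraging the pairwise orthogonality of the central covers $c_i$ together with the distributivity results in Theorem \ref{th:theorem3.11}(i) and Corollary \ref{co:cor4.6}(ii). Before starting, I would record the key preliminary fact: since $(p_i)_{i \in I}$ is $\Gamma$-orthogonal, there exists a pairwise orthogonal family $(d_i)_{i \in I} \subseteq \Gamma(E)$ with $p_i \leq d_i$. Minimality of the central cover (Theorem \ref{th:theorem4.7}(2) / Definition \ref{de:centcov}) gives $c_i \leq d_i$, so $(c_i)_{i \in I}$ is itself pairwise orthogonal in $\Gamma(E)$, and hence $c = \bigvee_{i \in I} c_i \in \Gamma(E)$ by Theorem \ref{th:theorem4.7}(1).

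For (i), the inclusion $p_i \leq c_i \leq c$ for every $i$ yields $p \leq c$, and since $c \in \Gamma(E)$, minimality of $\gamma p$ gives $\gamma p \leq c$. Conversely, $p_i \leq p \leq \gamma p \in \Gamma(E)$ forces $c_i = \gamma p_i \leq \gamma p$ for every $i$, so $c = \bigvee c_i \leq \gamma p$, establishing $\gamma p = c$.

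For (ii), the inequality $p_i \leq p \wedge c_i$ is immediate. For the reverse, I would apply Theorem \ref{th:theorem3.11}(i) to the central element $c_i$ and the family $(p_j)_{j \in I}$ with supremum $p$:
\[
p \wedge c_i \;=\; c_i \wedge \textstyle\bigvee_{j \in I} p_j \;=\; \bigvee_{j \in I}\,(c_i \wedge p_j).
\]
When $j = i$ the term equals $p_i$; when $j \neq i$, the bound $p_j \leq c_j$ together with pairwise orthogonality of the central covers gives $c_i \wedge p_j \leq c_i \wedge c_j = 0$. Hence $p \wedge c_i = p_i$.

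For (iii), since $e \leq p \leq c$ by (i), I would invoke Corollary \ref{co:cor4.6}(ii) applied to the pairwise orthogonal central family $(c_i)$ (with supremum $c$) to obtain $e = \bigvee_{i \in I}(e \wedge c_i)$. To match this against the desired formula I need $e \wedge c_i = e \wedge p_i$: the inequality $e \wedge p_i \leq e \wedge c_i$ is clear since $p_i \leq c_i$, while conversely $e \wedge c_i \leq e \leq p$ combined with (ii) gives $e \wedge c_i \leq p \wedge c_i = p_i$, hence $e \wedge c_i \leq e \wedge p_i$. Substituting into the displayed supremum yields $e = \bigvee_{i \in I}(e \wedge p_i)$. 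There is no genuine obstacle here; the only point requiring care is ensuring $e \in E[0,c]$ so that Corollary \ref{co:cor4.6}(ii) actually applies, which is precisely what part (i) delivers.
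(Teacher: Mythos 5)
Your proposal is correct and follows essentially the same route as the paper's proof: pairwise orthogonality of the central covers, the sandwich argument for $\gamma p = c$, Theorem \ref{th:theorem3.11}(i) for part (ii), and Corollary \ref{co:cor4.6}(ii) for part (iii). The only difference is cosmetic --- you spell out the two inequalities for $e\wedge c_i = e\wedge p_i$ where the paper writes $e\wedge c_i = e\wedge p\wedge c_i = e\wedge p_i$ directly from (ii).
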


\begin{proof} Since $(p_i)_{i\in I}$ is a $\Gamma$-orthogonal family,
$(c_i)_{i\in I}$ is an orthogonal family in $\Gamma(E)$, so $p$ and
$c$ are well-defined. Since $p_i\leq p$ for all $i\in I$, we have
$\bigvee_{i\in I}\gamma p_i=c\leq \gamma p$. On the other hand,
$p_i\leq \gamma p_i\leq c$ implies $\gamma p\leq c$. This proves (i).
Suppose that $i,j\in I$. If $i=j$, then $p_i\wedge c_i=p_i\wedge
\gamma p_i=p_i$; and if $i\neq j$, then $c_i\wedge c_j=0$, so $c_i
\wedge p_j=0$. Therefore, by Theorem \ref{th:theorem3.11} (i), $p
\wedge c_i=(\bigvee_{j\in I}p_j)\wedge c_i=\bigvee_{j\in I}(p_j
\wedge c_i)=p_i$, which proves (ii).
To prove (iii), suppose $e\in E[0,p]$. Then for each $i\in I$,
$e\wedge c_i=e\wedge p\wedge c_i=e\wedge p_i$ by (ii). Thus by
Corollary \ref{co:cor4.6} (ii), $e=e\wedge c=\bigvee_{i\in I}
(e\wedge c_i)=\bigvee_{i\in I}(e\wedge p_i)$.
\end{proof}

The following theorem extends Theorem \ref{th:theorem3.7} in the
setting of COPEAs. Since the proof is analogous to
\cite[Theorem 6.14]{FoPuC}, we omit it.

\begin{theorem}\label{th:theorem4.12} Let $(p_i)_{i\in I} \subseteq E$
be a $\Gamma$-orthogonal family in $E$, let $p:=\sum_{i\in I}p_i
=\bigvee_{i\in I}p_i$, and let $X:=\prod_{i\in }E[0,p_i]$. Define
the mapping $\Phi:X\to E[0,p]$ by
$$
\Phi((e_i)_{i\in I}):=\sum_{i\in I} e_i=\bigvee_{i\in I} e_i \
 \mbox{for every} \ (e_i)_{i\in I}\in X.
$$
Then $\Phi$ is a PEA-isomorphism of $X$ onto $E[0,p]$ and
$$
\Phi^{-1}(e):=(e\wedge \gamma p_i)_{i\in I}\ \mbox{for all} \ e\in E[0,p].
$$
\end{theorem}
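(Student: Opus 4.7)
The strategy is to mirror the proof of Theorem \ref{th:theorem3.7}, but with the infinite-index machinery developed in Lemma \ref{le:lemma4.11}, Corollary \ref{co:cor4.6}, and Theorem \ref{th:theorem4.5} doing the work that was previously handled by finite additivity. Set $c_i:=\gamma p_i$, so $(c_i)_{i\in I}$ is a pairwise orthogonal family in $\Gamma(E)$ with $p_i\leq c_i$, $p\wedge c_i=p_i$, and $\gamma p=\bigvee_{i\in I}c_i=:c\in\Gamma(E)$. The natural candidate for the inverse of $\Phi$ is $\Psi\colon E[0,p]\to X$ defined by $\Psi(e):=(e\wedge c_i)_{i\in I}=(e\wedge\gamma p_i)_{i\in I}$; by Lemma \ref{le:lemma4.11} (iii), $e\wedge c_i=e\wedge p_i\leq p_i$, so $\Psi$ indeed lands in $X$. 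I plan to verify in turn: (a) $\Phi$ is well defined into $E[0,p]$; (b) $\Phi$ and $\Psi$ are inverse bijections; (c) both maps preserve the partial sum.

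For (a), if $(e_i)_{i\in I}\in X$ then $e_i\leq p_i\leq c_i$, so the family is $\Gamma$-orthogonal; since $E$ is a COPEA, its orthosum exists and by Lemma \ref{le:lemma4.3} (iv) coincides with $\bigvee_{i\in I}e_i$, which is bounded by $\bigvee_{i\in I}p_i=p$. Clearly $\Phi(1_X)=\Phi((p_i))=p$. For (b), $\Psi\circ\Phi=\mathrm{id}_X$ follows from Theorem \ref{th:theorem3.11} (i): for each $j\in I$,
\[
\Phi((e_i))\wedge c_j=\Bigl(\bigvee_{i\in I}e_i\Bigr)\wedge c_j=\bigvee_{i\in I}(e_i\wedge c_j)=e_j,
\]
because $e_j\wedge c_j=e_j$ while $e_i\wedge c_j\leq c_i\wedge c_j=0$ for $i\neq j$. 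Conversely, $\Phi\circ\Psi=\mathrm{id}_{E[0,p]}$ is exactly the content of Lemma \ref{le:lemma4.11} (iii) (or Corollary \ref{co:cor4.6} (ii) applied with $d=e\in E[0,p]\subseteq E[0,c]$).

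For (c), additivity of $\Phi$ is immediate from Theorem \ref{th:theorem4.5} (ii): if $(e_i)+(f_i)=(e_i+f_i)$ exists in $X$, meaning each $e_i+f_i$ exists and lies below $p_i\leq c_i$, then $\Phi((e_i))+\Phi((f_i))=\sum_{i\in I}e_i+\sum_{i\in I}f_i=\sum_{i\in I}(e_i+f_i)=\Phi((e_i+f_i))$. For the reverse direction, if $e+f$ exists in $E[0,p]$, then applying the morphism $p_{c_i}\colon E\to E[0,c_i]$ (which is a morphism since $c_i\in\Gamma(E)$) coordinate by coordinate yields $(e+f)\wedge c_i=(e\wedge c_i)+(f\wedge c_i)$, so $\Psi(e)+\Psi(f)$ exists coordinatewise in $X$ and equals $\Psi(e+f)$. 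Combining (a)--(c) shows $\Phi$ is a PEA-isomorphism with $\Phi^{-1}=\Psi$.

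The only genuinely new difficulty compared with the finite case of Theorem \ref{th:theorem3.7} is ensuring that infinite orthosums commute properly with meets by central elements and with the coordinatewise partial operation on $X$. However, this is precisely what Theorems \ref{th:theorem3.11} (i) and \ref{th:theorem4.5} (ii) and Lemma \ref{le:lemma4.11} were designed for, so no further ad hoc argument is required.
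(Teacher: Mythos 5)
Your proof is correct. The paper itself omits the argument (referring to \cite[Theorem 6.14]{FoPuC}), but what you have written is exactly the intended adaptation of the finite case (Theorem \ref{th:theorem3.7}): $\Psi(e)=(e\wedge\gamma p_i)_{i\in I}$ as the inverse, Lemma \ref{le:lemma4.11} (iii) and Theorem \ref{th:theorem3.11} (i) for the mutual-inverse computations, and Theorem \ref{th:theorem4.5} (ii) together with the morphisms $p_{c_i}$ for additivity in both directions, so no comparison beyond this is needed.
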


\section{Type-determining sets} \label{sc:TDsets} 

\emph{The assumption that $E$ is a COPEA remains in force}.
As usual, a \emph{closure operator} on the set of all subsets
$Q$ of $E$ is a mapping $Q\mapsto Q^c$ such that, for all
$Q,R\subseteq E$, (1) $Q\subseteq Q^c$, (2) $Q\subseteq R \
\Rightarrow \ Q^c\subseteq R^c$, and (3) $(Q^c)^c=Q^c$. A subset
$Q$ is said to be \emph{closed} (with respect to $^c$) iff $Q^c=Q$.
The intersection of closed subsets is necessarily closed.
Generalizing the analogous notions for effect algebras in
\cite{FoPuType}, we introduce the following closure operators:
$Q\mapsto [Q]$, $Q\mapsto Q^{\gamma}$, $Q\mapsto Q^{\downarrow}$,
and $Q\mapsto Q''$, where
\begin{enumerate}
\item $[Q]$ is the set of all suprema of $\Gamma$-orthogonal
 families of elements of $Q$. We define $[\emptyset ]=\{0\}$.
\item $Q^{\gamma}:=\{ q\wedge c:q\in Q,\,c\in \Gamma(E)\}$.
\item $Q^{\downarrow}:= \bigcup_{q\in Q}E[0,q]$.
\item $Q':=\{ e\in E:q\wedge e=0 \ \forall q\in Q\}$.
\item $Q'':=(Q')'$.
\end{enumerate}

\begin{definition} We say that a subset $K\subseteq E$ is
\emph{type-determining} (TD) iff $K=[K]=K^{\gamma}$, and $K$
is \emph{strongly type-determining} (STD) iff $K=[K]=K^{\downarrow}$.
\end{definition}
Clearly, the intersection of TD (respectively, STD) subsets of
$E$ is again TD (respectively, STD).

\begin{theorem}\label{th:theorem5.2} Let $Q\subseteq E$. Then:
{\rm(i)} $[Q^{\gamma}]$ is the smallest TD subset of $E$ containing
$Q$. {\rm(ii)} $[Q^{\downarrow}]$ is the smallest STD subset of $E$
containing $Q$. {\rm(iii)} $Q'$ and $Q''$ are STD subsets of $E$.
{\rm(iv)} $Q'=[Q^{\gamma}]'=[Q^{\downarrow}]'$.
\end{theorem}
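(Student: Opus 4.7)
My plan is to verify each of the four closure identities directly from the definitions, leveraging the distributivity result in Theorem \ref{th:theorem3.11}(i) and the fact that in a COPEA, $\Gamma$-orthogonal families have suprema equal to their orthosums (Lemma \ref{le:lemma4.3}). The one step that is not a routine verification is showing $[[S]] = [S]$ for a subset $S \subseteq E$, because it requires \emph{flattening} an iterated $\Gamma$-orthogonal family into a single $\Gamma$-orthogonal family.

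For that flattening step, suppose $(f_j)_{j \in J}$ is $\Gamma$-orthogonal in $E$ with witnesses $d_j \in \Gamma(E)$, and each $f_j = \bigvee_{i \in I_j} q_{j,i}$ arises from a $\Gamma$-orthogonal family $(q_{j,i})_{i \in I_j}$ witnessed by central elements $c_{j,i}$. Set $c'_{j,i} := c_{j,i} \wedge d_j \in \Gamma(E)$ (by Theorem \ref{th:central}). Then $q_{j,i} \leq f_j \leq d_j$ and $q_{j,i} \leq c_{j,i}$ give $q_{j,i} \leq c'_{j,i}$, and the $c'_{j,i}$ are pairwise disjoint (distinct $j$'s by disjointness of $d_j$'s, same $j$ with distinct $i$'s by disjointness of $c_{j,i}$'s). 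Thus $(q_{j,i})_{(j,i)}$ is a single $\Gamma$-orthogonal family whose supremum equals $\bigvee_j f_j$ by associativity of suprema. This shows $[[S]] \subseteq [S]$.

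With this in hand, part (i) reduces to checking that $[Q^\gamma]$ is closed under $[\cdot]$ and under $^\gamma$. Closure under $[\cdot]$ is the flattening above applied with $S = Q^\gamma$. Closure under $^\gamma$: take $r = \bigvee_i (q_i \wedge c_i) \in [Q^\gamma]$ and $c \in \Gamma(E)$; by Theorem \ref{th:theorem3.11}(i), $r \wedge c = \bigvee_i (q_i \wedge c_i \wedge c)$, each factor lies in $Q^\gamma$, and the family remains $\Gamma$-orthogonal (witnesses unchanged), so $r \wedge c \in [Q^\gamma]$. Minimality: any TD set containing $Q$ must contain $Q^\gamma$ (by $^\gamma$-closure) and hence $[Q^\gamma]$ (by $[\cdot]$-closure). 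Part (ii) is parallel: $[Q^\downarrow]$ is closed under $[\cdot]$ by flattening, and closed under $\downarrow$ because if $e \leq r = \bigvee_i e_i$ with $e_i \leq q_i \in Q$ and the $(e_i)$ $\Gamma$-orthogonal via witnesses $c_i$, then by Theorem \ref{th:theorem4.12} (or Corollary \ref{co:cor4.6}(ii)) one has $e = \bigvee_i (e \wedge c_i)$, and $e \wedge c_i \leq e_i \leq q_i$ gives $e \wedge c_i \in Q^\downarrow$; the family $(e \wedge c_i)$ is $\Gamma$-orthogonal via the same witnesses $c_i$, so $e \in [Q^\downarrow]$.

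For (iii), to show $Q'$ is STD, downward-closedness is immediate from $a \leq b \Rightarrow q \wedge a \leq q \wedge b$, and $[\cdot]$-closedness uses Theorem \ref{th:theorem3.11}(i): if $(e_i)$ is $\Gamma$-orthogonal in $Q'$ and $q \in Q$, then $q \wedge \bigvee_i e_i = \bigvee_i (q \wedge e_i) = 0$. Since $Q''$ is of the form $R'$ for $R = Q'$, it is STD as well. For (iv), the inclusion $Q \subseteq Q^\gamma \subseteq [Q^\gamma]$ (using $c = 1$ and singleton families) gives $[Q^\gamma]' \subseteq Q'$; conversely, for $e \in Q'$ and $r = \bigvee_i (q_i \wedge c_i) \in [Q^\gamma]$, Theorem \ref{th:theorem3.11}(i) yields $e \wedge r = \bigvee_i (e \wedge q_i \wedge c_i) \leq \bigvee_i (e \wedge q_i) = 0$, so $e \in [Q^\gamma]'$. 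The identity $Q' = [Q^\downarrow]'$ follows by the same distributive computation, noting $Q \subseteq Q^\downarrow \subseteq [Q^\downarrow]$ and that $e_i \leq q_i$ forces $e \wedge e_i \leq e \wedge q_i = 0$.
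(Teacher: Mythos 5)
Your overall strategy matches the paper's: verify closure under each of the operators separately, using distributivity of meets over suprema of $\Gamma$-orthogonal families, and get minimality from monotonicity of the operators. Your explicit flattening argument for $[[S]]\subseteq [S]$ is correct and is in fact more careful than the paper, which simply takes for granted that $Q\mapsto [Q]$ is a closure operator. Parts (i) and (ii) as you present them are sound; in (ii) the unstated inequality $e\wedge c_i\leq e_i$ does need the small computation $r\wedge c_i=\bigvee_j(e_j\wedge c_i)=e_i$, but that is a legitimate application of Theorem \ref{th:theorem3.11}(i) because $c_i$ is central.

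The genuine problem is in (iii) and (iv), where you cite Theorem \ref{th:theorem3.11}(i) to write $q\wedge\bigvee_i e_i=\bigvee_i(q\wedge e_i)$ for $q\in Q$, and $e\wedge r=\bigvee_i(e\wedge q_i\wedge c_i)$ for $e\in Q'$. That theorem only applies when the element being met against the supremum is \emph{central}, and here $q$ and $e$ are arbitrary elements of $E$; unrestricted distributivity of $\wedge$ over $\bigvee$ fails in PEAs (it already fails in orthomodular lattices), so the step as justified does not go through. The correct tool is Lemma \ref{le:lemma4.11}(iii): if $x$ is a common lower bound of $q$ and of $p:=\bigvee_i p_i$ with $(p_i)_{i\in I}$ a $\Gamma$-orthogonal family, then $x=\bigvee_i(x\wedge p_i)$, and each $x\wedge p_i$ (which exists, being $x\wedge\gamma p_i$) is a common lower bound of $q$ and $p_i$, hence equals $0$ when $q\wedge p_i=0$; therefore $q\wedge p=0$. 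This common-lower-bound argument via Lemma \ref{le:lemma4.11}(iii) is exactly how the paper proves (iii) and (iv), and substituting it for your invocation of Theorem \ref{th:theorem3.11}(i) repairs the proof without disturbing anything else.
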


\begin{proof} Obviously, $Q\subseteq [Q^{\gamma}]$ and if $K$ is TD
and $Q\subseteq K$, then $[Q^{\gamma}]\subseteq K$. Also,
$[[Q^{\gamma}]]\subseteq [Q^{\gamma}]$, so to prove (i) it suffices
to show that $[Q^{\gamma}]^{\gamma}\subseteq [Q^{\gamma}]$. Let
$e\in[Q^{\gamma}]^{\gamma}$, then there exist $d\in \Gamma(E)$ and
$p\in [Q^{\gamma}]$ with $e=p\wedge d$. As $p\in [Q^{\gamma}]$, there
is a $\Gamma$-orthogonal family $(p_i)_{i\in I}\subseteq Q^{\gamma}$
with $p=\bigvee_{i\in I} p_i$, and for each $i\in I$, we can write
$p_i=q_i\wedge d_i$ with $q_i\in Q$ and $d_i\in \Gamma(E)$. Since
$e\leq p$, by Lemma \ref{le:lemma4.11} (iii), $e\wedge p_i$ exists for
all $i\in I$; moreover, $e\wedge p_i=p\wedge d\wedge p_i=p_i\wedge d
=q_i\wedge d_i\wedge d$. As $d_i\wedge d\in \Gamma(E)$, it follows
the $e\wedge p_i\in Q^{\gamma}$ for all $i\in I$, and the family
$(e\wedge p_i)_{i\in I}$ is $\gamma$-orthogonal.  Consequently, by
Lemma \ref{le:lemma4.11} (iii), $e=\bigvee_{i\in I}(e\wedge p_i)\in
[Q^{\gamma}]$. This proves (i). The proof of (ii) is quite similar to
the proof of (i), and we omit it.
To prove (iii), let $e\in Q'$ and $f\leq e$. Then $e\wedge q=0$ for
all $q\in Q$, whence $f\wedge q=0$ for all $q\in Q$, hence $f\in Q'$,
so that $Q'=Q'^{\downarrow}$.  Let $(p_i)_{i\in I}\subseteq Q'$ be
$\Gamma$-orthogonal family, and $p=\bigvee_{i\in }p_i$. Then $q
\wedge p_i=0$ for all $q\in Q$ and all $i\in I$, and since $q\wedge p
\leq p$, by Lemma \ref{le:lemma4.11} (iii), $p\wedge q=\bigvee_{i\in I}p
\wedge q\wedge p_i=0$, hence $p\in Q'$. It follows that $Q'=[Q']$,
and $Q'$ is STD. As $Q''=(Q')'$,  it follows that $Q''$ is STD.
To prove (iv), observe that $Q\subseteq [Q^{\gamma}]\subseteq
[Q^{\downarrow}]$ implies $[Q^{\downarrow}]'\subseteq [Q^{\gamma}]'
\subseteq Q'$. Let $e\in Q'$, and $(p_i)_{i\in I}$ be a
$\Gamma$-orthogonal family of elements in $Q^{\downarrow}$ with
$p=\bigvee_{i\in I}p_i$. Then each $p_i\leq q_i$ for some $q_i\in Q$,
and $e\wedge p_i\leq e\wedge q_i=0$ for all $i\in I$. By Lemma
\ref{le:lemma4.11}(iii), $e\wedge p=\bigvee_{i\in I}e\wedge p\wedge p_i
=0$, which shows that $e\in [Q^{\downarrow}]'$, proving (iv).
\end{proof}

\begin{theorem}\label{th:theorem5.3} Let $K\subseteq E$ be a TD set. Then:
{\rm(i)} $K\cap \gamma K=K\cap \Gamma(E)\subseteq \gamma K\subseteq
\Gamma(E)$. {\rm(ii)} There exists $c\in \Gamma(E)$ such that $\gamma K
= \Gamma(E)[0,c]$. {\rm(iii)} There exists $d\in \Gamma(E)$ such
that $K\cap \gamma K=\Gamma(E)[0,d]$.
\end{theorem}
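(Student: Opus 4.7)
The overall plan is to prove the three parts in sequence, with (i) being a direct unpacking of definitions and (ii), (iii) following parallel arguments that exhibit $\gamma K$ and $K\cap\gamma K$ as principal downsets of the complete Boolean algebra $\Gamma(E)$ (completeness from Theorem \ref{th:theorem4.7}). For (i), since $\gamma E=\Gamma(E)$ by Theorem \ref{th:centcov}, we have $\gamma K\subseteq\Gamma(E)$, so $K\cap\gamma K\subseteq K\cap\Gamma(E)$. Conversely, any $e\in K\cap\Gamma(E)$ is central and so $\gamma e=e$, placing $e$ in $\gamma K$ and giving the reverse inclusion.

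For (ii), I will verify that $\gamma K$ is (a) downward closed in $\Gamma(E)$ and (b) closed under pairwise orthogonal suprema, and then use a Zorn/maximality argument to produce a maximum element. For (a): given $c=\gamma q\in\gamma K$ with $q\in K$ and $c_0\in\Gamma(E)$ with $c_0\leq c$, the element $q\wedge c_0$ lies in $K^{\gamma}=K$; applying the hull identity of Definition \ref{de:hull}(3) with $f:=c_0$ (so $\gamma f=c_0$) yields $\gamma(q\wedge c_0)=\gamma q\wedge c_0=c_0$, placing $c_0$ in $\gamma K$. For (b): for a pairwise orthogonal family $(c_i)_{i\in I}\subseteq\gamma K$ with $c_i=\gamma q_i$, $q_i\in K$, the family $(q_i)$ is $\Gamma$-orthogonal (the $c_i$ serve as central witnesses), so $\bigvee q_i\in[K]=K$; a two-sided inequality (using $q_i\leq c_i$ and $q_i\leq\gamma(\bigvee q_i)$) shows $\gamma(\bigvee q_i)=\bigvee c_i\in\gamma K$. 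Now Zorn's lemma gives a maximal pairwise orthogonal family in $\gamma K$ whose supremum $c$ lies in $\gamma K$ by (b). For any $c'\in\gamma K$, the element $c'\wedge c\,'\in\gamma K$ by (a) is orthogonal to every member of the maximal family, so maximality forces $c'\wedge c\,'=0$, i.e.\ $c'\leq c$. Downward closure then gives $\gamma K=\Gamma(E)[0,c]$.

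Part (iii) follows by applying the same template to $L:=K\cap\gamma K=K\cap\Gamma(E)$. Downward closure in $\Gamma(E)$ is immediate: $c_0\leq c\in L$ gives $c_0=c\wedge c_0\in K^{\gamma}=K$ and $c_0\in\Gamma(E)$. A pairwise orthogonal family of central elements in $L$ is $\Gamma$-orthogonal by Lemma \ref{le:lemma4.3}(ii), so its supremum is in $[K]=K$ and is central by Theorem \ref{th:theorem4.7}, hence in $L$. The same Zorn-maximality argument as in (ii) then produces $d\in L$ with $L=\Gamma(E)[0,d]$. The main obstacle is merely bookkeeping: every step is driven by one of the defining closure properties $K=[K]$ or $K=K^{\gamma}$, by the hull identity $\gamma(e\wedge c)=\gamma e\wedge c$ for $c\in\Gamma(E)$, or by the Boolean completeness of $\Gamma(E)$. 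The one computational input worth highlighting is that $\gamma$ sends $\Gamma$-orthogonal suprema to Boolean suprema; this is not part of the hull-map axioms but follows immediately from the characterization of $\gamma q$ as the least central majorant of $q$.
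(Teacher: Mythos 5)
Your argument is correct. Note that the paper itself omits the proof of this theorem, deferring to the effect-algebra analogue in \cite{FoPuType}; your write-up supplies exactly the standard argument that reference uses --- establish that $\gamma K$ (resp.\ $K\cap\Gamma(E)$) is downward closed in $\Gamma(E)$ via $K=K^{\gamma}$ and the hull identity, closed under orthogonal suprema via $K=[K]$ and the fact that $\gamma$ carries $\Gamma$-orthogonal suprema to suprema of central covers (Lemma \ref{le:lemma4.11}(i)), and then exhaust by a maximal orthogonal family in the complete Boolean algebra $\Gamma(E)$. The only cosmetic point is that the maximal pairwise orthogonal family should be taken to consist of nonzero elements so that adjoining a nonzero orthogonal element genuinely enlarges it; with that understood, every step checks out.
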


\begin{proof} We omit the proof since it is analogous to the proof of
\cite[Theorem 4.5]{FoPuType}.
\end{proof}

Obviously, for every $c\in \Gamma(E)$, the central interval
$\Gamma(E)[0,c]=\Gamma(E)\cap E[0,c]$ is a TD subset of $E$.
\begin{corollary}\label{co:cor5.4} If $K$ is a TD subset of $E$,
then so are $\gamma K$  and $K\cap \gamma K$.
\end{corollary}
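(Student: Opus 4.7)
The plan is to deduce this corollary as a one-line consequence of Theorem \ref{th:theorem5.3} together with the observation made immediately before the corollary, namely that $\Gamma(E)[0,c]$ is a TD subset of $E$ for every $c \in \Gamma(E)$.

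First, I would invoke Theorem \ref{th:theorem5.3}(ii) applied to the TD set $K$ to produce an element $c \in \Gamma(E)$ with $\gamma K = \Gamma(E)[0,c]$; the preceding observation then immediately yields that $\gamma K$ is TD. Similarly, Theorem \ref{th:theorem5.3}(iii) furnishes $d \in \Gamma(E)$ with $K \cap \gamma K = \Gamma(E)[0,d]$, and once again the observation gives that $K \cap \gamma K$ is TD.

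The only step that requires any actual work is the unstated verification behind the observation that central intervals are TD. For closure under $^\gamma$, if $q \in \Gamma(E)[0,c]$ and $e \in \Gamma(E)$, then $q \wedge e \in \Gamma(E)$ by Theorem \ref{th:central}, and clearly $q \wedge e \leq q \leq c$, so $q \wedge e \in \Gamma(E)[0,c]$. For closure under $[\,\cdot\,]$, a $\Gamma$-orthogonal family $(p_i)_{i \in I}$ in $\Gamma(E)[0,c]$ is a pairwise orthogonal family of central elements each bounded by $c$; by Theorem \ref{th:theorem4.7}(1), its supremum belongs to $\Gamma(E)$, and since each $p_i \leq c$ the supremum also lies in $E[0,c]$, hence in $\Gamma(E)[0,c]$.

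There is effectively no obstacle: Theorem \ref{th:theorem5.3} has already done the substantive work of identifying $\gamma K$ and $K \cap \gamma K$ as central intervals, and the TD property of central intervals is a direct consequence of $\Gamma(E)$ being a complete Boolean algebra that is closed under meets with arbitrary central elements. Accordingly, the authors rightly record this as a corollary needing no separate proof beyond the pointer to Theorem \ref{th:theorem5.3} and the preceding remark.
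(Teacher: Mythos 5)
Your proof is correct and follows exactly the route the paper intends: the corollary is recorded without proof precisely because Theorem \ref{th:theorem5.3} (ii) and (iii) identify $\gamma K$ and $K\cap\gamma K$ as central intervals $\Gamma(E)[0,c]$, which the preceding remark notes are TD. Your verification that central intervals are indeed TD (closure under $^{\gamma}$ via Theorem \ref{th:central}, closure under $[\,\cdot\,]$ via Lemma \ref{le:lemma4.3} and Theorem \ref{th:theorem4.7}) is sound and merely fills in what the paper labels ``obvious.''
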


\begin{definition}\label{de:de4.7t} Let $K$ be a TD subset of $E$.
The (unique) element $c\in \gamma K$ such that $\gamma K=\Gamma(E)
[0,c]$ (Theorem \ref{th:theorem5.3} (ii)) is denoted by $c_K$ and is
called the \emph{type-cover} of $K$. The type cover $c_{K\cap
\gamma K}$ of the TD set $K\cap\gamma K$ is called the \emph{restricted
type-cover} of $K$.
\end{definition}

The following definition is analogous to \cite[Definition 5.1]{FoPuType}.
The terminology is borrowed from \cite[pp. 28--29]{Top65}.
\begin{definition}\label{de:de5.1t} Let $K$ be a TD subset of the
COPEA $E$ and let $c\in \Gamma(E)$. Then:
\begin{enumerate}
\item $c$ is type-$K$ iff $c\in K$.
\item $c$ is locally type-$K$ iff $c\in \gamma K$.
\item $c$ is purely non-$K$ iff no nonzero subelement of $c$ belongs to $K$.
\item $c$ is properly non-$K$ iff no nonzero central subelement of $c$
 belongs to $K$.
\end{enumerate}
\end{definition}

If $c\in \Gamma(E)$ and $c$ is type-$K$ (respectively, locally
type-$K$, etc.), we shall also say that the direct summand $E[0,c]$
of $E$ is type-$K$ (respectively, locally type-$K$, etc.).

The proof of the next theorem is omitted since it is the same
as the proof of \cite[Theorem 5.2]{FoPuType}.

\begin{theorem} \label{th:theorem5.7}
Let $K$ be a TD subset of $E$ and
let $c\in \Gamma(E)$. Then:
\begin{enumerate}
\item $c$ is type-$K$ $\Leftrightarrow$ $\Gamma(E)[0,c]\subseteq K\cap
  \gamma K$ $\Leftrightarrow$ $c\leq c_{K\cap \gamma K}$.
\item If $K$ is STD, then $c$ is type-$K$ $\Leftrightarrow$ $E[0,c]
 \subseteq K$.
\item $c$ is locally type-$K$ $\Leftrightarrow$ $\Gamma(E)[0,c]
 \subseteq\gamma K$ $\Leftrightarrow$ $c\leq c_K$.
\item $c$ is purely non-$K$ $\Leftrightarrow$ $K\cap E[0,c]=\{ 0\}$
 $\Leftrightarrow$ $c\leq (c_K)'$.
\item $c$ is properly non-$K$ $\Leftrightarrow$ $K\cap \Gamma(E)
 [0,c]=\{ 0\}$ $\Leftrightarrow$ $c\leq (c_{K\cap \gamma K})'$.
\item $c$ is both locally type-$K$ and properly non-$K$
 $\Leftrightarrow$ $c\leq c_K\wedge (c_{K\cap \gamma K})'$
\end{enumerate}
\end{theorem}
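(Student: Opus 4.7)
The plan is to use Theorem~\ref{th:theorem5.3}, which provides central elements $c_K$ and $c_{K\cap\gamma K}$ with $\gamma K = \Gamma(E)[0,c_K]$ and $K\cap\gamma K = \Gamma(E)[0,c_{K\cap\gamma K}]$, so that every inclusion of central intervals becomes a single inequality in the boolean algebra $\Gamma(E)$. Two closure properties of TD sets do the real work: $K = K^\gamma$ lets us conclude $k\wedge d\in K$ whenever $k\in K$ and $d\in\Gamma(E)$; and Proposition~\ref{pr:central}(viii) translates $x\wedge c = 0$ (for central $c$) into $x\leq c'$, and hence $\gamma x\leq c'$.

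Items (i), (ii), and (iii) follow directly. For (i), if $c$ is type-$K$ and $d\in\Gamma(E)[0,c]$, then $d = d\wedge c\in K^\gamma = K$ and $d = \gamma d\in\gamma K$, so $\Gamma(E)[0,c]\subseteq K\cap\gamma K = \Gamma(E)[0,c_{K\cap\gamma K}]$, equivalent to $c\leq c_{K\cap\gamma K}$; the converses are immediate from $c\in\Gamma(E)[0,c]$. Item (iii) repeats this argument with $\gamma K$ in place of $K\cap\gamma K$. For (ii), the additional hypothesis $K = K^\downarrow$ promotes $c\in K$ to $E[0,c]\subseteq K$, while $c\in E[0,c]$ supplies the reverse.

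The substantive items are (iv) and (v). For the hard direction of (iv), assume $K\cap E[0,c] = \{0\}$ and choose $k\in K$ with $\gamma k = c_K$ (available because $c_K\in\gamma K$). Then $k\wedge c\in K^\gamma = K$ lies in $E[0,c]$, hence equals $0$; Proposition~\ref{pr:central}(viii) gives $k\leq c'\in\Gamma(E)$, and minimality of the central cover forces $c_K = \gamma k\leq c'$, i.e., $c\leq c_K'$. Conversely, any $x\in K\cap E[0,c]$ satisfies $\gamma x\leq c\wedge c_K = 0$, whence $x = 0$. Item (v) is parallel but simpler, since $c_{K\cap\gamma K}$ itself lies in $K$: the element $c\wedge c_{K\cap\gamma K}$ is central, lies below $c$, and belongs to $K^\gamma = K$, so in the forward direction $K\cap\Gamma(E)[0,c] = \{0\}$ forces $c\wedge c_{K\cap\gamma K} = 0$, while in the reverse direction any $d\in K\cap\Gamma(E)[0,c]$ is central and lies in $K\cap\gamma K = \Gamma(E)[0,c_{K\cap\gamma K}]$, hence $d\leq c\wedge c_{K\cap\gamma K} = 0$. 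Finally, (vi) is the conjunction of (iii) and (v). The main obstacle is the converse of (iv): one must manufacture a witness $k\in K$ with $\gamma k = c_K$ and then use the closure $K = K^\gamma$ in tandem with Proposition~\ref{pr:central}(viii) to promote the meet relation $k\wedge c = 0$ in $K$ to the central inequality $c_K\leq c'$; once this is done, everything else is bookkeeping in the boolean algebra $\Gamma(E)$.
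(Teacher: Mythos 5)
Your proof is correct, and it supplies exactly the argument the paper omits (the paper defers to the effect-algebra version in \cite[Theorem 5.2]{FoPuType}): each item reduces via Theorem~\ref{th:theorem5.3} to an inequality in $\Gamma(E)$, with the closure $K=K^{\gamma}$, Proposition~\ref{pr:central}(viii), and minimality of the central cover doing the work in (iv) and (v), including the key step of choosing $k\in K$ with $\gamma k=c_K$. No gaps.
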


\begin{corollary}\label{co:co5.4t} If $K$ is a TD subset of $E$ and $c\in \Gamma(E)$, the following conditions are
equivalent: {\rm(i)} $c$ is locally type-$K$. {\rm(ii)} Every nonzero direct summand of $E[0,c]$ contains a nonzero element of $K$.
\end{corollary}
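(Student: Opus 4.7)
The plan is to reduce this to parts (iii) and (iv) of Theorem \ref{th:theorem5.7}, since those characterize ``locally type-$K$'' and ``purely non-$K$'' in terms of the type-cover $c_K$. First I would record the description of direct summands: by Theorem \ref{th:subcentr} we have $\Gamma(E[0,c]) = \Gamma(E)[0,c]$, so a nonzero direct summand of $E[0,c]$ is precisely $E[0,d]$ for some nonzero $d \in \Gamma(E)[0,c]$. Thus condition (ii) translates to: for every nonzero $d \in \Gamma(E)[0,c]$, $K \cap E[0,d] \neq \{0\}$.

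For (i)$\Rightarrow$(ii), I would argue directly. If $c$ is locally type-$K$, then by Theorem \ref{th:theorem5.7} (iii), $\Gamma(E)[0,c] \subseteq \gamma K$. Given any nonzero $d \in \Gamma(E)[0,c]$, we can write $d = \gamma k$ for some $k \in K$; then $k \leq \gamma k = d$, so $k \in K \cap E[0,d]$, and $k \neq 0$ because $\gamma k = d \neq 0$.

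For (ii)$\Rightarrow$(i), I would argue by contraposition. Suppose (i) fails, so $c \not\leq c_K$ by Theorem \ref{th:theorem5.7} (iii). Since $\Gamma(E)$ is a Boolean algebra (Theorem \ref{th:central}), the element $d := c \wedge (c_K)'$ lies in $\Gamma(E)[0,c]$ and is nonzero, with $d \leq (c_K)'$. Theorem \ref{th:theorem5.7} (iv) then tells us that $d$ is purely non-$K$, i.e.\ $K \cap E[0,d] = \{0\}$. This exhibits a nonzero direct summand of $E[0,c]$ containing no nonzero element of $K$, contradicting (ii).

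There is no serious obstacle here: the substantive content has already been carried out in Theorem \ref{th:theorem5.7}, and the present corollary is essentially a translation between the ``type-cover'' language of that theorem and the ``direct summand'' language of the current statement. The only point that requires a small justification is the identification of the direct summands of $E[0,c]$ with $\Gamma(E)[0,c]$, which is supplied by Theorem \ref{th:subcentr}.
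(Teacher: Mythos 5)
Your proof is correct and follows essentially the same route as the paper's: both directions reduce to Theorem \ref{th:theorem5.7} via the type-cover $c_K$, with the identification of direct summands of $E[0,c]$ with $\Gamma(E)[0,c]$ supplied by Theorem \ref{th:subcentr}. The only cosmetic differences are that in (i)$\Rightarrow$(ii) the paper writes $c=\gamma k$ and passes to $k\wedge d\in K$ (using $K=K^{\gamma}$) where you apply $\Gamma(E)[0,c]\subseteq\gamma K$ directly to $d$, and in (ii)$\Rightarrow$(i) the paper argues directly from $\gamma k\leq c_K$ where you invoke Theorem \ref{th:theorem5.7}(iv) in contrapositive form; both are sound.
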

\begin{proof} (i) $\Rightarrow$ (ii). Assume (i). Then by Theorem \ref{th:theorem5.7} (iii), $\Gamma(E)[0,c]\subseteq \gamma K$, hence $c=\gamma k$ for some $k\in K$. Let $0\neq d\in\Gamma(E)[0,c]$. Then $k\wedge d\in K\cap E[0,d]$ with $\gamma(k\wedge d)=\gamma k\wedge d=c\wedge d=d\neq 0$, whence $k\wedge d\neq 0$.

(ii)$\Rightarrow$(i). Assume (ii). Then, $c\wedge c_K'\leq c$ an if $c\wedge c_K'\neq 0$, there exists $0\neq k\in K$ with $k\in E[0,c\wedge c_K']$, hence $\gamma k\leq c_K'$, contradicting Theorem \ref{th:theorem5.3} (ii). Therefore $c\wedge c_K'=0$, whence $c\leq c_K$.
\end{proof}

\begin{lemma}\label{le:lemma5.8} If $K$ is a TD subset of $E$, then
$c_{K'\cap \gamma(K')}=(c\sb{K})'$.
\end{lemma}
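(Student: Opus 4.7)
The plan is to unwind both sides by reducing to the central elements. Write $L := K'$; by Theorem \ref{th:theorem5.2}(iii), $L$ is STD (hence in particular TD), so the restricted type-cover $c_{L \cap \gamma L}$ is well-defined, and by Theorem \ref{th:theorem5.3}(iii) we have $L \cap \gamma L = \Gamma(E)[0, c_{L \cap \gamma L}]$. Therefore it suffices to prove the set equality $L \cap \gamma L = \Gamma(E)[0, (c_K)']$, from which uniqueness of the type-cover in Definition \ref{de:de4.7t} yields the result.

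First I would apply Theorem \ref{th:theorem5.3}(i) to $L$ to collapse $L \cap \gamma L$ to $L \cap \Gamma(E)$, so that the task reduces to identifying which central elements lie in $K'$. For $c \in \Gamma(E)$, the condition $c \in K'$ unfolds as $c \wedge q = 0$ for every $q \in K$. Here the centrality of $c$ lets me invoke Proposition \ref{pr:central}(viii): $c \wedge q = 0 \Leftrightarrow q \leq c^- = c'$.

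Next I would pass from $q \leq c'$ to $\gamma q \leq c'$ using the defining property of the central cover (Definition \ref{de:centcov}): since $c'$ is central and $\gamma q$ is the smallest central element dominating $q$, the two conditions are equivalent. So $c \in K' \cap \Gamma(E)$ iff $\gamma q \leq c'$ for every $q \in K$, iff $\gamma K \subseteq \Gamma(E)[0, c']$. By Theorem \ref{th:theorem5.3}(ii), $\gamma K = \Gamma(E)[0, c_K]$, so this last inclusion is equivalent to $c_K \leq c'$, i.e., $c \leq (c_K)'$. This shows $L \cap \Gamma(E) = \Gamma(E)[0, (c_K)']$, completing the reduction.

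I do not anticipate a serious obstacle; the one subtle point is correctly swapping between $q \leq c'$ and $\gamma q \leq c'$, which requires that $c'$ actually be central (guaranteed since $\Gamma(E)$ is a Boolean algebra by Theorem \ref{th:central}) together with the minimality in Definition \ref{de:centcov}. Everything else is a direct application of the already-established structural results, so the proof should be short.
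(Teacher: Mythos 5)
Your proof is correct and follows essentially the same route as the paper: both reduce the claim to showing that a central element $c$ lies in $K'$ iff $c\wedge c_K=0$, i.e.\ $c\leq (c_K)'$. The only (immaterial) difference is in the middle step: the paper picks a witness $k^*\in K$ with $\gamma k^*=c_K$ and uses the hull identity $\gamma(c\wedge k)=c\wedge\gamma k$, whereas you pass through Proposition \ref{pr:central}(viii) and the minimality of the central cover; both are valid.
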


\begin{proof} We have to prove that $K'\cap\,\gamma(K')=\Gamma(E)
[0,(c\sb{K})']$. As $K'\cap\,\gamma(K')=K'\cap\,\Gamma(E)$,
it suffices to prove that, for $c\in \Gamma(E)$, $c\in K'\
\Leftrightarrow \ c\leq(c\sb{K})'$, the latter inequality
being equivalent to $c\wedge c_K=0$.
Let $c\in \Gamma(E)$. Suppose $c\in K'$ and let $k^*\in K$ be such
that $c_K=\gamma k^*$, then $c\wedge k^*=0$, whence $c\wedge c_K
=\gamma(c\wedge k^*)=0$. Conversely, suppose $c\wedge c_K=0$ and let
$k\in K$. Then, as $\gamma k\leq c_K$, it follows that $\gamma
(c\wedge k)=c\wedge \gamma k=0$, whence $c\wedge k=0$, so $c\in K'$.
\end{proof}

\begin{theorem}\label{th:theorem5.9} Let $K$ be a TD subset of $E$.
Then there exist unique pairwise orthogonal $c_1,c_2,c_3\in \Gamma(E)$
such that $c_1+c_2+c_3=1$;
$$
E=E[0,c_1]\times E[0,c_2]\times E[0,c_3];
$$
$c_1$ is type-$K$; $c_2$ is locally type-$K$, but properly non-$K$;
and $c_3$ is purely non-$K$. Moreover, $c_1=c_{K\cap \gamma K}$,
$c_2=c_K\wedge (c_{K\cap \gamma K})'$, $c_3=(c_K)'$,
$$
K\cap \gamma K=\Gamma(E)[0,c_1],\,K\subseteq E[0,c_1+c_2],\,
\Gamma(E)[0,c_2+c_3]\cap K=\{ 0\}.
$$
\end{theorem}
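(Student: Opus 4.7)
The plan is to set up the three central elements explicitly from the type-cover and restricted type-cover, verify their pairwise orthogonality and that they partition $1$, then invoke Theorem \ref{th:theorem3.7} for the product decomposition and read off the type properties from Theorem \ref{th:theorem5.7}. First I would put $c_1 := c_{K\cap\gamma K}$, $c_2 := c_K\wedge (c_{K\cap\gamma K})'$, and $c_3 := (c_K)'$. Since $K\cap\gamma K\subseteq \gamma K$, Theorem \ref{th:theorem5.3} (ii), (iii) give $c_1\leq c_K$; thus inside the Boolean algebra $\Gamma(E)$ (Theorems \ref{th:central}, \ref{th:theorem4.7}) the three elements are pairwise disjoint, equivalently pairwise orthogonal (Lemma \ref{le:lemma4.3} (ii)), and
\[
c_1\vee c_2\vee c_3=\bigl(c_1\vee(c_K\wedge c_1')\bigr)\vee c_K'=c_K\vee c_K'=1.
\]
By Theorem \ref{th:sum-sup} (i), $c_1+c_2+c_3=1$, so Theorem \ref{th:theorem3.7} yields the direct product decomposition $E\cong E[0,c_1]\times E[0,c_2]\times E[0,c_3]$.

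Next I would check the type assignments by feeding $c_1,c_2,c_3$ into the equivalences of Theorem \ref{th:theorem5.7}. From $c_1\leq c_{K\cap\gamma K}$, part (i) gives that $c_1$ is type-$K$; from $c_2\leq c_K$ and $c_2\leq (c_{K\cap\gamma K})'$, parts (iii) and (v) (equivalently (vi)) give that $c_2$ is locally type-$K$ but properly non-$K$; from $c_3\leq (c_K)'$, part (iv) gives that $c_3$ is purely non-$K$. The three identities $K\cap\gamma K=\Gamma(E)[0,c_1]$, $K\subseteq E[0,c_1+c_2]$, and $\Gamma(E)[0,c_2+c_3]\cap K=\{0\}$ then fall out directly: the first is Definition \ref{de:de4.7t} rewritten, the second follows because $c_1+c_2=c_1\vee c_2=c_K$ and every $k\in K$ satisfies $k\leq\gamma k\leq c_K$, and the third follows because $c_2+c_3=c_1'$, so a central element $d\leq c_1'$ that lies in $K$ lies in $K\cap\gamma K=\Gamma(E)[0,c_1]$, forcing $d\leq c_1\wedge c_1'=0$.

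For uniqueness, I would suppose $d_1,d_2,d_3\in\Gamma(E)$ are pairwise orthogonal with $d_1+d_2+d_3=1$ and satisfy the three type conditions. Theorem \ref{th:theorem5.7} then forces $d_1\leq c_{K\cap\gamma K}=c_1$, $d_2\leq c_K\wedge(c_{K\cap\gamma K})'=c_2$, and $d_3\leq(c_K)'=c_3$. Since the $c_i$ are pairwise orthogonal and sum to $1$ (as do the $d_i$), the inequalities $d_i\leq c_i$ combined with the Boolean-algebra identity $(c_1\setminus d_1)\vee(c_2\setminus d_2)\vee(c_3\setminus d_3)=1\setminus 1=0$ force $d_i=c_i$ for $i=1,2,3$.

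The argument is essentially bookkeeping in the Boolean algebra $\Gamma(E)$ supplemented by the two structural results Theorem \ref{th:theorem3.7} and Theorem \ref{th:theorem5.7}; the only mildly delicate point is confirming that the property "properly non-$K$" is correctly captured by the inequality $c\leq(c_{K\cap\gamma K})'$, which is the content of Theorem \ref{th:theorem5.7} (v) and is therefore already available. No step should present a real obstacle.
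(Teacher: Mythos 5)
Your proposal is correct and follows essentially the same route as the paper's proof: the same explicit choices $c_1=c_{K\cap\gamma K}$, $c_2=c_K\wedge(c_{K\cap\gamma K})'$, $c_3=(c_K)'$, the same appeal to Theorem \ref{th:theorem5.7} for the type assignments and for the uniqueness inequalities $d_i\leq c_i$, and the same Boolean-algebra bookkeeping in $\Gamma(E)$ to close the argument. The only cosmetic difference is that you invoke Theorem \ref{th:theorem3.7} explicitly for the product decomposition and verify the three displayed identities in slightly more detail than the paper does.
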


\begin{proof} Put $c_1:=c_{K\cap\gamma K}$, $c_2:=c_K\wedge (c_{K
\cap\gamma K})'$, and $c_3:=(c_K)'$. As $c_{K\cap\gamma K}\leq c_K$,
we have $c_1+c_2+c_3=1$, $c_1+c_2=c_K$, and $c_2+c_3=(c_{K\cap
\gamma K})'$. Thus, by part (i) of Theorem \ref{th:theorem5.7} (i),
$c_1$ is of type-$K$; by part (vi) of Theorem \ref{th:theorem5.7},
$c_2$ is locally type-$K$ and properly non-$K$, and by part (iv) of
Theorem \ref{th:theorem5.7}, $c_3$ is purely non-$K$.
To prove uniqueness, suppose that $c_1,c_2$ and $c_3$ satisfy the
conditions in the first part of the theorem. Then $c_1+c_2$ is locally
type-$K$, hence $c_1+c_2\leq c_K$, and $c_3$ is purely non-$K$, hence
$c_3\leq (c_K)'$ by Theorem \ref{th:theorem5.7} (iii) and (iv). Since
$c_1+c_2+c_3=1=c_K+(c_K)'$, we have $c_1+c_2=c_K$, and $c_3=(c_K)'$.
Moreover, $c_1$ is type-$K$, hence $c_1\leq c_{K\cap \gamma K}$, $c_2$
is locally type-$K$ but properly non-$K$, hence $c_2\leq c_K\wedge
(c\sb{K\cap\gamma K})'$. Since $c_1+c_2=c_K=c_{K\cap \gamma K}+c_K
\wedge(c\sb{K\cap\gamma K})'$, we obtain $c_1=c_{K\cap \gamma K}$,
$c_2=c_K\wedge(c_{K\cap\gamma K})'$.
\end{proof}

\section{Examples of TD sets and direct decompositions} \label{sc:Examples} 

Recall that an \emph{atom} in a pseudo-effect algebra $E$ is a nonzero
element $a\in E$ such that if $x\leq a$ then either $x=0$ or $x=a$.
A pseudo-effect algebra $E$ is \emph{atomic} iff for every $e\in E$
there is an atom $a\leq e$. Let $A$ (which may be empty) denote the
set of all atoms of $E$.

\begin{lemma}\label{le:atom} If $a\in A$ is an atom in $E$, then
$\gamma a$ is an atom in $\Gamma(E)$. Consequently, if $E$ is atomic,
then $\Gamma(E)$ is atomic.
\end{lemma}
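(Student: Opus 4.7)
The plan is to prove the first (main) assertion by using centrality of any candidate central subelement of $\gamma a$ together with the atomicity of $a$, and then deduce the consequence from the definition of the central cover.

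For the first claim, I would fix an atom $a \in A$ and suppose, toward showing atomicity, that $c \in \Gamma(E)$ satisfies $0 < c \leq \gamma a$. Applying Theorem \ref{th:theorem3.11}(ii) to $a$ and $c$ yields the decomposition $a = (a \wedge c) + (a \wedge c\,') = (a \wedge c) \vee (a \wedge c\,')$. Since $a$ is an atom, each of the summands $a \wedge c$ and $a \wedge c\,'$ is either $0$ or $a$. I would then split into two cases. If $a \wedge c = a$, then $a \leq c$, and by minimality of the central cover, $\gamma a \leq c$; combined with $c \leq \gamma a$ this gives $c = \gamma a$. If instead $a \wedge c = 0$, then necessarily $a = a \wedge c\,' \leq c\,'$, whence $\gamma a \leq c\,'$; but then $c \leq \gamma a \leq c\,'$ forces $c \leq c \wedge c\,' = 0$ (using Proposition \ref{pr:central}(iii) and (v)), contradicting $c \neq 0$. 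Thus the only $c \in \Gamma(E)$ with $0 < c \leq \gamma a$ is $\gamma a$ itself, so $\gamma a$ is an atom in the Boolean algebra $\Gamma(E)$.

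For the consequence, suppose $E$ is atomic and let $0 \neq c \in \Gamma(E)$. By atomicity there exists an atom $a \in A$ with $a \leq c$. Because $c$ is central and $\gamma a$ is the \emph{smallest} central element above $a$, we have $\gamma a \leq c$. By the first part, $\gamma a$ is an atom of $\Gamma(E)$ lying below $c$, which proves that $\Gamma(E)$ is atomic.

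The only step requiring care is the dichotomy on $a \wedge c$ and $a \wedge c\,'$: it relies on both summands existing in $E$ (guaranteed by Proposition \ref{pr:central}(vi) since $c \in \Gamma(E)$) and on $a = (a \wedge c) \vee (a \wedge c\,')$ (Theorem \ref{th:theorem3.11}(ii)), after which the atomicity of $a$ cleanly forces one of the two subcases. I do not expect any genuine obstacle beyond correctly invoking these earlier results.
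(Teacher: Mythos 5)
Your proof is correct and follows essentially the same route as the paper: both arguments reduce to the dichotomy $a\wedge c\in\{0,a\}$ forced by the atomicity of $a$, and the second part is identical. The paper merely compresses the first step by writing $c=\gamma(c\wedge a)$ via the hull-mapping property of $\gamma$ (Theorem \ref{th:centcov}), where you instead use the decomposition of $a$ by the central element $c$ together with minimality of the central cover; also note that the decomposition $a=(a\wedge c)+(a\wedge c\,')$ is really Proposition \ref{pr:central} (vi)--(vii) rather than Theorem \ref{th:theorem3.11}(ii), a harmless mis-citation.
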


\begin{proof} Let $a\in A$, and $c\in \Gamma(E)$, $c\leq \gamma a$.
Then $c=\gamma(c\wedge a)$, so that $c=0$ if $c\wedge a=0$, or $c=
\gamma a$ if $c\wedge a=a$. If $E$ is atomic, then for every $c\in
\Gamma(E)\subseteq E$ there is $a\in A$ with $a\leq c$, which yields
$\gamma a\leq c$.
\end{proof}

We say that an element $p\in E$, or equivalently, that $E[0,p]$ is
\emph{atom free} iff $A\cap E[0,p]=\emptyset$.

\begin{lemma} \label{le:lemma6.2} $[A]$ is the STD subset of $E$
generated by $A$.
\end{lemma}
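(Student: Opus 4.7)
The plan is to invoke Theorem \ref{th:theorem5.2} (ii), which asserts that $[A^{\downarrow}]$ is the smallest STD subset of $E$ containing $A$, and then to show that $[A^{\downarrow}]=[A]$.

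First I would compute $A^{\downarrow}$ explicitly. By definition $A^{\downarrow}=\bigcup_{a\in A}E[0,a]$, and since each $a\in A$ is an atom, the only elements of $E[0,a]$ are $0$ and $a$. Thus $A^{\downarrow}=A\cup\{0\}$ (with the convention $A^{\downarrow}=\{0\}$ if $A=\emptyset$).

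Next I would argue $[A]=[A\cup\{0\}]$. The inclusion $[A]\subseteq[A\cup\{0\}]$ is immediate from monotonicity of $[\,\cdot\,]$. For the reverse inclusion, take any $\Gamma$-orthogonal family $(p_i)_{i\in I}\subseteq A\cup\{0\}$ with supremum $p$; removing the indices where $p_i=0$ yields a $\Gamma$-orthogonal subfamily of $A$ whose supremum is still $p$, so $p\in[A]$. (Note that $0\in[A]$ as well, obtained either from the empty family or, when $A\neq\emptyset$, from any singleton in a suitably degenerate form; the convention $[\emptyset]=\{0\}$ in Section \ref{sc:TDsets} makes this immediate.)

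Combining these two observations gives $[A]=[A\cup\{0\}]=[A^{\downarrow}]$, and Theorem \ref{th:theorem5.2} (ii) then identifies this with the smallest STD subset of $E$ containing $A$. There is no real obstacle: the lemma is essentially a bookkeeping consequence of the fact that atoms have trivial down-sets, so the closure operator $^{\downarrow}$ collapses on $A$.
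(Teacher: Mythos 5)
Your proposal is correct and follows the paper's own argument exactly: compute $A^{\downarrow}=A\cup\{0\}$ (or $\emptyset$ when $A=\emptyset$, per the definition of $^{\downarrow}$), observe $[A^{\downarrow}]=[A]$, and invoke Theorem \ref{th:theorem5.2} (ii). The extra detail you supply about discarding zero summands is a harmless elaboration of the same one-line proof.
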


\begin{proof} If $A=\emptyset$, then $A^{\downarrow}=\emptyset$,
otherwise $A^{\downarrow}=A\cup\{0\}$. In both cases,
$[A^{\downarrow}]=[A]$, and the result follows from Theorem
\ref{th:theorem5.2} (ii).
\end{proof}

An element of the STD set $[A]$ is called a \emph{polyatom}.
The following theorem for COPEAs is analogous to
\cite[Theorem 7.4]{FoPuType} for COEAs, and it enables us to
decompose $E$ into atomic and atom free parts.

\begin{theorem}\label{th:theorem6.3} {\rm(i)} The set $A'=[A]'$ is
STD and consists of all atom free elements of $E$. {\rm(ii)}
The set $A''=[A]''$ is STD and its nonzero part consists of
elements $p\in E$ such that $E[0,p]$ is atomic. {\rm(iii)}
$c_{A'\cap \gamma(A')}=c_{[A]}'$ is atom free. {\rm(iv)}
$A\subseteq [A]\subseteq E[0,c_{[A]}]$. {\rm(v)} If $p\in E$, then
$p$ is atom free iff $[A]\cap E[0,p]=\{ 0\}$. {\rm(vi)} $[A\cap
\Gamma(E)]=[A]\cap\Gamma(E)$.
\end{theorem}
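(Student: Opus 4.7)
My first move is to exploit the identity $[A^{\downarrow}]=[A]$ (since $A^{\downarrow}=A\cup\{0\}$, or is empty), which by Theorem~\ref{th:theorem5.2}(iv) gives $A'=[A]'$ and hence $A''=[A]''$; both are STD by Theorem~\ref{th:theorem5.2}(iii), so the structural assertions of (i) and (ii) are immediate. For the characterization in (i), note that an atom $a$ has only $0$ and $a$ as subelements, so $a\wedge e=0$ is equivalent to $a\not\leq e$, identifying $A'$ with the atom-free elements. For (ii), given nonzero $p\in A''$ and $0\neq q\leq p$, $q$ cannot lie in $A'$ (else $p\wedge q=q\neq 0$ contradicts $p\in (A')'$), so by (i) some atom sits below $q$; conversely, if $E[0,p]$ is atomic then for any $e\in A'$ any common nonzero lower bound of $p$ and $e$ would force an atom below $e$, so $p\wedge e=0$ and $p\in A''$.

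Part (iii) follows by applying Lemma~\ref{le:lemma5.8} to $K=[A]$: this yields $c_{A'\cap\gamma(A')}=(c_{[A]})'$, and atom-freeness is then checked by observing that any atom $a\leq (c_{[A]})'$ would satisfy $\gamma a\leq c_{[A]}$ (since $a\in A\subseteq[A]$) and $\gamma a\leq(c_{[A]})'$ (by minimality of $\gamma a$), forcing $\gamma a=0=a$. For (iv), $A\subseteq[A]$ is obvious via singleton $\Gamma$-orthogonal families, and $[A]\subseteq E[0,c_{[A]}]$ follows from $\gamma[A]=\Gamma(E)[0,c_{[A]}]$, since every $p\in[A]$ satisfies $p\leq\gamma p\leq c_{[A]}$. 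Part (v) is then a direct verification: if $p$ is atom-free and $q=\bigvee_{i} a_i\in[A]\cap E[0,p]$ with $(a_i)\subseteq A$ $\Gamma$-orthogonal, every $a_i\leq q\leq p$ must vanish, so $q=0$; conversely, any atom below $p$ lies in $[A]\cap E[0,p]$.

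The main obstacle is (vi). The inclusion $[A\cap\Gamma(E)]\subseteq [A]\cap\Gamma(E)$ is immediate from Theorem~\ref{th:theorem4.7}(1), which places the supremum of a pairwise orthogonal family of central atoms back in $\Gamma(E)$, but the reverse inclusion requires extracting the centrality of the constituent atoms from the centrality of their supremum. Given $c\in [A]\cap\Gamma(E)$, I would write $c=\bigvee_{i\in I}a_i$ with $(a_i)\subseteq A$ $\Gamma$-orthogonal, and apply Lemma~\ref{le:lemma4.11}(ii) with $p_i:=a_i$; this yields $a_i=c\wedge\gamma a_i$, and since both $c$ and $\gamma a_i$ lie in $\Gamma(E)$, Theorem~\ref{th:central} forces $a_i\in\Gamma(E)$, so $a_i\in A\cap\Gamma(E)$ and $c\in[A\cap\Gamma(E)]$. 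The delicate point is recognizing that Lemma~\ref{le:lemma4.11}(ii) is precisely the tool that promotes the \emph{a priori} non-central atoms into central ones by intersecting them with their own central covers inside the central element $c$.
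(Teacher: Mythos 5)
Your proposal is correct and follows essentially the same route as the paper's: parts (i)--(v) rest on Theorem \ref{th:theorem5.2}, Lemma \ref{le:lemma5.8}, and the downward closure of the STD sets $A'$ and $A''$, exactly as in the paper, and part (vi) uses the same reduction of a central polyatom to its constituent atoms via their central covers. The only (harmless) variations are that in (iv) you invoke $\gamma[A]=\Gamma(E)[0,c_{[A]}]$ rather than deducing $a\leq c_{[A]}$ from part (iii), and in (vi) you cite Lemma \ref{le:lemma4.11}(ii) together with the closure of $\Gamma(E)$ under meets where the paper recomputes $h=\bigvee_{i}\gamma a_i$ and cancels; both shortcuts are valid.
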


\begin{proof} By Theorem \ref{th:theorem5.2} (iii), $A'$ and $A''$
are STD subsets of $E$. Since $p\in A'$ iff $p\wedge a=0$ for all
atoms $a\in A$, $A'$ is  the set of all atom free elements. Let $p
\in A''$, then $q\wedge a=0$ for all $a\in A$ implies $q\wedge p=0$,
hence if $p\wedge a=0$ for all $a\in A$, then $p=0$. Therefore if
$0\neq p\in A''$ then there is an atom $a\in A$ with $a\leq p$. This
proves (i) and (ii). Part (iii) follows from (i) and Lemma
\ref{le:lemma5.8}.
(iv) If $a$ is an atom, then $a=(a\wedge c_{[A]})+(a\wedge c_{[A]}')$,
where $a\wedge c_{[A]}'=0$ by part (iii). It follows that $a
\leq c_{[A]}$. Therefore, $A\subseteq E[0,c_{[A]}]$, and since
$E[0,c_{[A]}]$ is STD, $[A]\subseteq E[0,c_{[A]}]$.
(v) Every atom is a nonzero polyatom, and a polyatom is nonzero iff
it dominates an atom, hence $A\cap E[0,p]=\emptyset\ \Leftrightarrow
\ [A]\cap E[0,p]=\{ 0\}$.
(vi) Since $[A]$ is a TD subset of $E$, so is $[A]\cap \gamma [A]=
[A]\cap \Gamma(E)$. Thus, as $A\cap\Gamma(E)\subseteq [A]\cap\Gamma(E)$,
we have $[A\cap\Gamma(E)]\subseteq [A]\cap\Gamma(E)$. Let $h\in [A]
\cap \Gamma(E)$. Since $h\in [A]$, there is a $\Gamma$-orthogonal
sequence $(a_i)_{i\in I}$ of atoms with $h=\sum_{i\in I}a_i=
\bigvee_{i\in I} a_i$. Then $\gamma a_i$, $i\in I$, are pairwise
orthogonal elements in $\Gamma(E)$, and since $h\in \Gamma(E)$,
$h=\gamma h=\bigvee_{i\in I}\gamma a_i=\sum_{i\in I}\gamma a_i$. It
follows that $\sum_{i\in I} a_i=\sum_{i\in I}\gamma a_i$, and from
$a_i\leq \gamma a_i$ for all $i\in I$, we deduce that $a_i=\gamma a_i
\in \Gamma(E)$, and therefore $h\in [A\cap \Gamma(E)]$.
\end{proof}

The notions of boolean and subcentral elements and monads were
introduced in \cite{FoPuC}, and they also make sense in the
setting of pseudo-effect algebras.

\begin{definition}\label{de:bool} An element $b\in E$ is
\emph{boolean} iff $E[0,b]$ is a boolean algebra, i.e., $E[0,b]
=\Gamma(E[0,b])$.
\end{definition}

By Lemma \ref{le:lemma3.13}, for every $p\in E$ and $c\in \Gamma(E)$,
the element $p\wedge c$ is central in $E[0,p]$. The next definition
concerns those elements for which the converse also holds:
\begin{definition}\label{de:monsub} An element $p\in E$ is
\emph{subcentral} iff for every $d\in \Gamma(E[0,p])$, $d=
p\wedge c$ for some $c\in \Gamma(E)$.
\end{definition}
Clearly, every central element is subcentral (Theorem \ref{th:subcentr}), and every atom is
subcentral.

\begin{definition}\label{de:monad} An element $h\in E$ is a
\emph{monad} iff for every $e\in E[0,h]$, $e=h\wedge \gamma e$.
\end{definition}

Notice that every atom is a monad. Similarly as in \cite[Theorem 3.9]
{FoPuType}, we obtain the following characterization of monads.

\begin{theorem}\label{th:theorem6.7} Let $h\in E$. Then the
following are equivalent: {\rm(i)} $h$ is a monad. {\rm(ii)} $h$ is
both subcentral and boolean. {\rm(iii)} For all $e\in E[0,h]$,
$\gamma e=\gamma h \ \Rightarrow \ e=h$. {\rm(iv)} For all $e
\in E[0,h]$, $e^{\sim_h},\,e^{-_h}\leq (\gamma e)'$. {\rm(v)} For
all $e,f\in E[0,h]$, $e+_hf$ exists $\Leftrightarrow$
$\gamma e\wedge \gamma f=0$.
\end{theorem}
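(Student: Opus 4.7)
The plan is to establish the cycle (i)$\Rightarrow$(ii)$\Rightarrow$(iii)$\Rightarrow$(iv)$\Rightarrow$(i) together with the separate equivalence (i)$\Leftrightarrow$(v). For (i)$\Rightarrow$(ii), the monad identity $e=h\wedge\gamma e$ combined with Lemma~\ref{le:lemma3.13}(i) places every $e\in E[0,h]$ into $\Gamma(E[0,h])$ with central lift $\gamma e\in\Gamma(E)$, showing $h$ is simultaneously boolean and subcentral. For (ii)$\Rightarrow$(iii), given $e\in E[0,h]$ with $\gamma e=\gamma h$, booleanness and subcentrality give $e=h\wedge c$ for some $c\in\Gamma(E)$; the hull identity yields $\gamma e=\gamma h\wedge c$, forcing $\gamma h\leq c$ and hence $e=h\wedge c=h$.

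The heart of the cycle is (iii)$\Rightarrow$(iv). For $e\in E[0,h]$, I set $f:=h\wedge(\gamma e)'$, so that the $\Gamma$-orthogonality of $e\leq\gamma e$ and $f\leq(\gamma e)'$ makes the sum $g:=e+f=e\vee f\leq h$ exist; since $\gamma$ is order-preserving, $\gamma g=\gamma e\vee\gamma f=\gamma e\vee(\gamma h\wedge(\gamma e)')=\gamma h$ by a boolean computation in $\Gamma(E)$, and hypothesis (iii) then forces $g=h$. Because $e$ and $f$ commute by Theorem~\ref{th:charcent}(iii), we have $e+f=f+e=h$, so PEA cancellation in $h=e+e^{\sim_h}=e^{-_h}+e$ identifies $e^{\sim_h}=e^{-_h}=f\leq(\gamma e)'$. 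For (iv)$\Rightarrow$(i), writing $h=e+e^{\sim_h}=e\vee e^{\sim_h}$ (the sum is a join because the summands sit in orthogonal central regions by (iv)), Theorem~\ref{th:theorem3.11}(i) yields $h\wedge\gamma e=(e\wedge\gamma e)\vee(e^{\sim_h}\wedge\gamma e)=e\vee 0=e$.

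For (i)$\Rightarrow$(v), if $\gamma e\wedge\gamma f=0$ then $e,f$ are $\Gamma$-orthogonal and $e+f=e\vee f\leq h$ gives $e+_hf$; for the converse under (i), put $c:=\gamma e\wedge\gamma f$, note that the monad identity gives $e\wedge c=f\wedge c=h\wedge c$, and apply Theorem~\ref{th:theorem3.7} inside $E[0,h]$ to the central pair $h\wedge c,\,h\wedge c'\in\Gamma(E[0,h])$ (Lemma~\ref{le:lemma3.13}) to obtain the PEA-isomorphism $E[0,h]\cong E[0,h\wedge c]\times E[0,h\wedge c']$. Under this isomorphism, the first coordinate of the existing sum $e+f$ becomes $(h\wedge c)+(h\wedge c)$, i.e.\ $1+1$ in $E[0,h\wedge c]$, which the PEA axiom ``$1+a$ exists $\Rightarrow a=0$'' forbids unless $h\wedge c=0$; then $c=\gamma h\wedge c=\gamma(h\wedge c)=0$. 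For (v)$\Rightarrow$(i), setting $f:=h\wedge(\gamma e)'$ gives $\gamma e\wedge\gamma f=0$, so (v) yields $s:=e+f\leq h$; the residue $t:=h\diagdown s$ satisfies $s+t=h$ in $E[0,h]$, so (v) gives $\gamma s\wedge\gamma t=0$, but $\gamma s=\gamma h\geq\gamma t$ forces $\gamma t=0$, hence $t=0$ and $s=h$, and central distributivity as in (iv)$\Rightarrow$(i) delivers $e=h\wedge\gamma e$. The main obstacle is the $\Rightarrow$ direction of (v) under (i), the lone step genuinely exploiting the PEA axiom forbidding $1+1$, via the product decomposition applied inside $E[0,h]$.
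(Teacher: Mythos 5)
Your proposal is correct, but it is organized differently from the paper's proof and one leg is argued by a genuinely different method. The paper proves (i)$\Leftrightarrow$(ii) separately and then closes the cycle (i)$\Rightarrow$(iii)$\Rightarrow$(iv)$\Rightarrow$(v)$\Rightarrow$(i); you instead run (i)$\Rightarrow$(ii)$\Rightarrow$(iii)$\Rightarrow$(iv)$\Rightarrow$(i) and treat (v) as a satellite equivalence. The technical heart coincides: your (iii)$\Rightarrow$(iv) is the paper's argument verbatim (set $f:=h\wedge(\gamma e)'$, compute $\gamma(e+f)=\gamma h$ via the boolean identity in $\Gamma(E)$, invoke (iii), and cancel on both sides using the commutation from Theorem \ref{th:charcent}(iii)), and your (iv)$\Rightarrow$(i) is essentially the paper's (v)$\Rightarrow$(i), using $\Gamma$-orthogonality of $e$ and $e^{\sim_h}$ plus Theorem \ref{th:theorem3.11}(i). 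What you do differently: your (ii)$\Rightarrow$(iii) via the hull identity $\gamma(h\wedge c)=\gamma h\wedge c$ is a clean new step replacing the paper's trivial (i)$\Rightarrow$(iii); and for the forward direction of (v) you give a direct argument from (i) using the product decomposition $E[0,h]\cong E[0,h\wedge c]\times E[0,h\wedge c']$ with $c=\gamma e\wedge\gamma f$ and the axiom forbidding $1+1$, whereas the paper gets (iv)$\Rightarrow$(v) in one line from $f\leq e^{\sim_h}\leq(\gamma e)'$. Your version is heavier but self-contained and illuminates why the monad condition forces disjoint central supports; the paper's is shorter but needs (iv) already in hand. Two trifles: in (v)$\Rightarrow$(i) your $t:=h\diagdown s$ satisfies $t+s=h$, not $s+t=h$ (use $t:=s\diagup h$ or apply (v) to $t+_h s$ instead -- either repair is immediate); and in (iii)$\Rightarrow$(iv) the phrase ``since $\gamma$ is order-preserving'' is not what justifies $\gamma(e\vee f)=\gamma e\vee\gamma f$ -- you need that $\gamma a\vee\gamma b$ is a central upper bound of $a\vee b$, which follows from the hull/central-cover property. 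Neither affects correctness.
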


\begin{proof} (i) $\Rightarrow$ (ii). Let $h$ be a monad. Since
$\Gamma(E[0,h])\subseteq E[0,h]$, if $d\in \Gamma(E[0,h])$, then
$d=h\wedge\gamma d$, which shows that $h$ is subcentral. Since
$e\in E[0,h]$ implies $e=h\wedge \gamma e$, and $\gamma e\in \Gamma(E)$,
by Lemma \ref{le:lemma3.13}, $e$ is central in $E[0,h]$, hence $E[0,h]
=\Gamma(E[0,h])$, so $h$ is boolean.

(ii)$\Rightarrow$ (i). As $h$ is subcentral, every $d\in \Gamma(E[0,h])$
is of the form $d=h\wedge c$ for some $c\in \Gamma(E)$. Then $d\leq c$
implies $\gamma d\leq c$, and $d=d\wedge \gamma d=h\wedge c\wedge
\gamma d=h\wedge\gamma d$. Since $h$ is also boolean, $\Gamma(E[0,h])
=E[0,h]$, whence $d=h\wedge \gamma d$ holds for all $d\in E[0,h]$.

(i)$\Rightarrow$(iii). Assume $\gamma e=\gamma h$, $e\leq h$. Then
$e=h\wedge \gamma e=h\wedge \gamma h=h$.

(iii)$\Rightarrow$(iv). Assume (iii), let $e\in E[0,h]$ and put $f
:=e+(h\wedge(\gamma e)')$. As $e\leq \gamma e$, $h\wedge(\gamma e)'
\leq (\gamma e)'$, and $\gamma e\in \Gamma(E)$, it follows that
$f=e\vee (h\wedge(\gamma e)')\in E[0,h]$. Since $\gamma e\leq \gamma h$,
we have
$$
\gamma f=\gamma e\vee \gamma(h\wedge(\gamma e)')=\gamma e\vee
 (\gamma h\wedge (\gamma e)')=(\gamma h\wedge \gamma e)\vee
 (\gamma h\wedge (\gamma e)')=\gamma h,
$$
whence by (iii), $e+(h\wedge(\gamma e)')=f=h=e+e\diagup h$ and
it follows that $h\wedge(\gamma e)'=e\diagup h=e^{\sim_h}\leq
(\gamma e)'$. We can also write $f=(h\wedge(\gamma e)')+e=h=
h\diagdown e+e$, which yields $h\wedge(\gamma e)'=h\diagdown e
=e^{-_h}\leq(\gamma e)'$.

(iv)$\Rightarrow$(v). Let $e,f\in E[0,h]$, and assume that $e+_h f$
exists. Then $f\leq e^{\sim_h}\leq (\gamma e)'$, the last inequality
following from (iv). Now $f\leq (\gamma e)'$ implies $\gamma f\leq
(\gamma e)'$ which entails (v).

(v)$\Rightarrow$(i). Let $e\in E[0,h]$, then $h=e+e^{\sim_h}=e+
(e\diagup h)$, and by (v), $\gamma(e\diagup h)\leq (\gamma e)'$. We
also have $h=h\wedge \gamma e+h\wedge (\gamma e)'$, and from $e
\leq h\wedge \gamma e$ and $e\diagup h\leq h\wedge (\gamma e)'$ we
deduce that $e=h\wedge \gamma e$, whence $h$ is a monad.
\end{proof}

Let $S$ denote the set of all subcentral elements of $E$, $B$ the
set of all boolean elements of $E$ and $H$ the set of all monads
in $E$. As in \cite{FoPuC}, it can be shown that $S$ is a TD set with
$[A]\subseteq S$, $B$ is an STD set with $[A]\subseteq B$, and $H=S\cap B$
is an STD set with $[A]\subseteq H$.

The following definition is an analogue of \cite[Definition 4.2]
{FoPuType}.

\begin{definition}\label{de:typeclass} A nonempty class $\mathcal K$
of PEAs is called a \emph{type-class} iff the following conditions
are satisfied: {\rm(1)} $\mathcal K$ is closed under the passage to
direct summands, i.e., if $H\in {\mathcal K}$ and $h\in \Gamma (H)$,
then $H[0,h]\in {\mathcal K}$. {\rm(2)} $\mathcal K$ is closed under
the formation of arbitrary direct products. {\rm(3)} If $E_1$ and $E_2$
are isomorphic PEAs and $E_1\in {\mathcal K}$, then $E_2\in{\mathcal K}$.
If, in addition to {\rm(2)} and {\rm(3)}, ${\mathcal K}$ satisfies
{\rm(1')} $H\in {\mathcal K}$, $h\in H$ $\Rightarrow$ $H[0,h]\in
{\mathcal K}$, then ${\mathcal K}$ is called a \emph{strong type-class}.
\end{definition}

We omit the proof of the next theorem as it is analogous to the
proof of \cite[Theorem 4.4]{FoPuType}.

\begin{theorem}\label{th:theorem6.9} Let $\mathcal K$ be a type-class of COPEAs
and define $K:=\{k\in E:E[0,k]\in{\mathcal K}\}$. Then $K$ is a TD
subset of $E$. If $\mathcal K$ is a strong type-class, the $K$ is STD.
\end{theorem}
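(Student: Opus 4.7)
The plan is to verify the three (or four) defining closure conditions for $K$ by translating each into a corresponding closure property of $\mathcal{K}$, using the structural results established earlier in the article. Throughout, $E[0,k]$ will denote the interval PEA, and the inclusions $K \subseteq [K]$ and $K \subseteq K^{\gamma}$ are immediate (each $k\in K$ equals $k\wedge 1$ and is the supremum of the singleton family $(k)$). The work lies in the reverse inclusions.

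First I would prove $K^{\gamma}\subseteq K$. Given $k\in K$ and $c\in\Gamma(E)$, Lemma \ref{le:lemma3.13}(i) yields $k\wedge c\in\Gamma(E[0,k])$. Because $k\wedge c\leq k$, the subinterval of $E[0,k]$ from $0$ to $k\wedge c$ coincides with $E[0,k\wedge c]$. Applying condition (1) of Definition \ref{de:typeclass} to $E[0,k]\in\mathcal{K}$ and the central element $k\wedge c$ gives $E[0,k\wedge c]\in\mathcal{K}$, so $k\wedge c\in K$. Next, to show $[K]\subseteq K$, let $(k_i)_{i\in I}$ be a $\Gamma$-orthogonal family in $K$ with $k:=\bigvee_{i\in I}k_i=\sum_{i\in I}k_i$. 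Theorem \ref{th:theorem4.12} provides a PEA-isomorphism $E[0,k]\cong\prod_{i\in I}E[0,k_i]$. Since each factor lies in $\mathcal{K}$, condition (2) forces the product into $\mathcal{K}$, and condition (3) then gives $E[0,k]\in\mathcal{K}$, i.e., $k\in K$. In particular, the empty product handles the degenerate case and ensures that $0\in K$, matching the convention $[\emptyset]=\{0\}$.

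For the strong case I would handle $K^{\downarrow}\subseteq K$ separately. If $k\in K$ and $e\in E[0,k]$, then $E[0,k]\in\mathcal{K}$ and $e$ is an element (not necessarily central) of this PEA; the subinterval of $E[0,k]$ from $0$ to $e$ is exactly $E[0,e]$. Condition (1$'$) of Definition \ref{de:typeclass} yields $E[0,e]\in\mathcal{K}$, so $e\in K$.

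I do not anticipate a genuine obstacle: the main work is already packaged in Theorem \ref{th:theorem4.12} (identifying the interval over an orthosum with a direct product of intervals) and Lemma \ref{le:lemma3.13} (which locates $k\wedge c$ inside $\Gamma(E[0,k])$). The only thing requiring a small amount of care is checking that the subinterval of a subinterval is still the interval of $E$ over the smaller element, and that the trivial PEA $\{0\}$ belongs to $\mathcal{K}$ via the empty direct product—both of which are routine.
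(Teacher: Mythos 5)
Your proof is correct and is essentially the argument the paper has in mind: the paper omits the proof, citing the analogous effect-algebra result in \cite{FoPuType}, and that argument proceeds exactly as you do, via Lemma \ref{le:lemma3.13} for $K^{\gamma}\subseteq K$, Theorem \ref{th:theorem4.12} for $[K]\subseteq K$, and condition (1$'$) for the STD case. The small points you flag (iterated intervals, and $0\in K$ via the empty product or via $H[0,0]$ for any $H\in\mathcal{K}$) are handled adequately.
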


\begin{examples}
The class of effect algebras (EAs) and the following subclasses
of effect algebras are strong type-classes: all boolean EAs, all
OMLs, all complete OMLs, all orthoalgebras, all lattice EAs, and
all atomic EAs. Similarly, all lattice-ordered PEAs and all atomic
PEAs are strong type-classes.
\end{examples}

According to \cite{Dv03}, the PEA $E$ is (i) \emph{monotone
$\sigma$-complete} iff any ascending sequence $x_1\leq x_2\leq
\cdots$ in $E$ has a supremum $\bigvee_{i=1}^{\infty}x_i$ in
$E$; (ii) $E$ is \emph{$\sigma$-complete} iff it is a
$\sigma$-complete lattice; (iii) $E$ satisfies the \emph{countable
Riesz interpolation property} ($\sigma$-RIP) iff, for countable
sequences $\{ x_1,x_2,\ldots \}$ and $\{ y_1,y_2,\ldots \}$ of
elements of $E$ such that $x_i\leq y_j$ for all $i,j$, there
exists an element $z\in E$ such that $x_i\leq z\leq y_j$ for all
$i,j$; and (iv) $E$ is \emph{archimedean} iff the only $x\in E$
such that $nx:=x+\cdots+x$ is defined in $E$ for any integer
$n\geq 1$ is $x=0$.

One can easily deduce that the monotone $\sigma$-complete PEAs,
the $\sigma$-complete PEAs, the PEAs with the countable Riesz
interpolation property, and archimedean PEAS are all strong
type-classes.

In \cite{DV1}, the following properties of PEAs were introduced.
\begin{definition}\label{de:de3.1DV1}  Let $(E;\,+,\,0,\,1)$ be a
pseudo-effect algebra. Then:
\begin{enumerate}
\item $E$ fulfills the \emph{Riesz Interpolation
 Property} (RIP) iff, for any $a_1,a_2,b_1,b_2\in E$ such that
 $a_1,a_2\leq b_1,b_2$ there is $c\in E$ such that $a_1,a_2
 \leq c\leq b_1,b_2$.
\item $E$ fulfills the \emph{Weak Riesz Decomposition Property}
 (RDP$_0$) iff, for any $a,b_1,b_2\in E$ such that $a\leq b_1+b_2$,
 there are $d_1,d_2\in E$ such that $d_1\leq b_1,\,d_2\leq b_2$ and
 $a=d_1+d_2$.
\item $E$ fulfills the \emph{Riesz Decomposition Property} (RDP) iff,
 for any $a_1,a_2,b_1,b_2$ $\in E$ such that $a_1+a_2=b_1+b_2$ there are
 $d_1,d_2,d_3,d_4\in E$ such that $d_1+d_2=a_1$, $d_3+d_4=a_2$,
 $d_1+d_3=b_1$, and $d_2+d_4=b_2$.
\item $E$ fulfills the \emph{Commutational Riesz Decomposition Property}
 (RDP$_1$) iff, for any $a_1,a_2,b_1,b_2\in E$ such that $a_1+a_2=b_1+
 b_2$ there are $d_1,d_2,d_3,d_4\in E$ such that {\rm(1)} $d_1+d_2=a_1$,
 $d_3+d_4=a_2$, $d_1+d_3=b_1$, $d_2+d_4=b_2$ and {\rm(2)} $x\leq d_2,
 y\leq d_3$ imply $x+y=y+x$.
\item $E$ fulfills the \emph{Strong Riesz Decomposition Property}
 (RDP$_2$) iff, for any $a_1,a_2,b_1,b_2\in E$ such that $a_1+a_2=
 b_1+b_2$ there are $d_1,d_2,d_3,d_4\in E$ such that {\rm(1)} $d_1
 +d_2=a_1$, $d_3+d_4=a_2$, $d_1+d_3=b_1$, $d_2+d_4=b_2$ and {\rm(2)}
 $d_2\wedge d_3=0$.
\end{enumerate}
\end{definition}

\begin{proposition}\label{pr:pr3.3DV1} {\rm \cite[Proposition 3.3]{DV1}} Let $(E;+,0,1)$ be a pseudo-effect algebra.
{\rm(i)} We have the implications
$$
{\rm (RDP}_2{\rm )}\Rightarrow{\rm (RDP}_1{\rm )}\Rightarrow{\rm (RDP)}
 \Rightarrow{\rm (RDP}_0{\rm )}\Rightarrow{\rm (RIP)},
$$
The converse of any of these implications fails.

{\rm(ii)} $E$ fulfils (RDP$_2$) iff $E$ is lattice ordered and fulfils (RDP$_0$).

{\rm(iii)} Let  $E$ be commutative (i.e., an effect algebra)Then we have the implications

(RDP$_2$) $\Rightarrow$ (RDP$_1$)$\Leftrightarrow$ (RDP)
$\Leftrightarrow$ (RDP$_0$) $\Rightarrow$ (RIP).
Any implication not shown here does not hold.
\end{proposition}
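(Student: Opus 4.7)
Since the result is a published proposition of Dvure\v{c}enskij and Vetterlein, my plan is to reconstruct their argument rather than invent a new proof. I would handle the three parts in sequence, with part (i) being the main engine.

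For part (i), I would prove the chain of implications one link at a time. The trivial link is (RDP$_1$) $\Rightarrow$ (RDP), which holds because (RDP$_1$) is (RDP) with an additional commutativity clause on the decomposition matrix. The next easy link is (RDP) $\Rightarrow$ (RDP$_0$): given $a\leq b_1+b_2$, set $c:=a\diagup(b_1+b_2)$, so $a+c=b_1+b_2$, and apply (RDP) to this equation; the first column of the resulting matrix gives $d_1\leq b_1$ and $d_2\leq b_2$ with $a=d_1+d_2$. For (RDP$_0$) $\Rightarrow$ (RIP), given $a_1,a_2\leq b_1,b_2$, I would write $b_2=a_1+(a_1\diagup b_2)$, apply (RDP$_0$) to $a_2\leq b_2$ to split $a_2=e_1+e_2$ with $e_1\leq a_1$, $e_2\leq a_1\diagup b_2$, and take $c:=a_1+e_2$; a parallel decomposition of $a_2$ inside $b_1$ then pins down $c\leq b_1$ as well. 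The serious link is (RDP$_2$) $\Rightarrow$ (RDP$_1$), where from $d_2\wedge d_3=0$ one must derive the commutativity condition $x+y=y+x$ whenever $x\leq d_2$, $y\leq d_3$; here I would apply (RDP$_2$) iteratively to the sum $x+y$ and exploit $d_2\wedge d_3=0$ to force the off-diagonal entries of the finer decomposition matrix to vanish, which collapses $x+y$ into $y+x$. The failure of the converses would be documented by citing the explicit counterexamples in \cite{DV1, DV2}.

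For part (ii), the ``only if'' direction would build binary meets out of (RDP$_2$) decompositions: given $a,b\in E$, apply (RDP$_2$) to the equality $a+a^\sim=b+b^\sim$ to extract $d_1,d_2,d_3,d_4$ with $d_2\wedge d_3=0$, and verify $d_1=a\wedge b$ using this disjointness together with properties (i) and (ii) of the orthogonal complements. For the converse, starting from a lattice-ordered PEA with (RDP$_0$) and from $a_1+a_2=b_1+b_2$, I would first apply (RDP$_0$) to obtain a decomposition matrix and then refine the middle coordinates by replacing $d_2$ and $d_3$ with disjoint surrogates using the lattice operations, adjusting $d_1$ and $d_4$ to compensate. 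This noncommutative bookkeeping is the step I expect to be the main obstacle: one has to keep the three other row/column sums intact while driving $d_2\wedge d_3$ down to $0$.

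For part (iii), the only new content beyond part (i) in the commutative (effect-algebra) setting is (RDP$_0$) $\Rightarrow$ (RDP). Given $a_1+a_2=b_1+b_2$, I would apply (RDP$_0$) to $b_1\leq a_1+a_2$ to obtain $b_1=d_1+d_3$ with $d_1\leq a_1$, $d_3\leq a_2$; then set $d_2:=a_1\diagdown d_1$ and $d_4:=a_2\diagdown d_3$ and check the four row/column identities, using commutativity freely to reorder summands in $d_2+d_4=b_2$. The remaining non-implications in the commutative case (e.g.\ the failure of (RDP$_1$) $\Rightarrow$ (RDP$_2$) and of (RIP) $\Rightarrow$ (RDP$_0$)) would again be witnessed by effect-algebra examples from \cite{DV1}.
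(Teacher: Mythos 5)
First, a point of comparison: the paper does not prove this proposition at all --- it is imported verbatim, with citation, from \cite[Proposition 3.3]{DV1} --- so there is no in-paper argument to measure your reconstruction against; your sketch has to stand on its own. The routine links do stand: (RDP$_1$)$\Rightarrow$(RDP) is trivial, your derivation of (RDP$_0$) from (RDP) by refining $a+(a\diagup(b_1+b_2))=b_1+b_2$ is the standard one, and your part (iii) works because in the commutative case the completing entries $d_2:=a_1\diagdown d_1$, $d_4:=a_2\diagdown d_3$ can be reassembled into $b_2$ by free reordering.

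Two of the remaining steps have genuine gaps. The clearest is (RDP$_0$)$\Rightarrow$(RIP): your candidate $c:=a_1+e_2$ (where $a_2=e_1+e_2$, $e_1\leq a_1$, $e_2\leq a_1\diagup b_2$) satisfies $a_1,a_2\leq c\leq b_2$, but the assertion that ``a parallel decomposition of $a_2$ inside $b_1$ pins down $c\leq b_1$'' is not an argument --- that parallel decomposition produces a \emph{different} element $c'=a_1+e_2'$ below $b_1$, and nothing forces your $c$ below $b_1$. Indeed the step already fails in the MV-effect algebra $[0,1]$: take $a_1=a_2=3/10$, $b_1=7/20$, $b_2=1/2$, and the legitimate (RDP$_0$)-decomposition $a_2=1/10+1/5$ with $1/10\leq a_1$ and $1/5\leq a_1\diagup b_2=1/5$; then $c=a_1+e_2=1/2\not\leq b_1$. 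The correct proof does not decompose $a_2$ itself but a relative difference such as $a_2\diagup b_1$ inside a sum built from both $b_1\diagdown a_1$ and $a_1\diagup b_2$, which is what couples the two upper bounds. The second gap is in the ``if'' half of part (ii): you propose to ``apply (RDP$_0$) to obtain a decomposition matrix,'' but (RDP$_0$) yields only the single row $a_1=d_1+d_2$; completing it to a $2\times2$ matrix already requires commuting summands past one another, which is exactly the noncommutative obstruction that makes (RDP$_0$) strictly weaker than (RDP). The lattice hypothesis must therefore enter before the matrix exists, not merely in a later ``disjointification'' of $d_2$ and $d_3$, so the step you defer as ``the main obstacle'' is in fact the entire content of that direction; likewise your description of (RDP$_2$)$\Rightarrow$(RDP$_1$) is too vague to check. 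Since the result is being quoted anyway, the sound options are to cite \cite{DV1} as the paper does or to write these three steps out in full.
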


Since for any $k\in E$, if $a+b$ exists in $E[0,k]$ then $a+b$ exists in $E$,  and the operations in direct products are defined pointwise, it is easy to deduce that PEAs with any of the properties from Definition
\ref{de:de3.1DV1} are strong type-classes.

In \cite{YY}, the following class of PEAs was introduced: An effect
algebra $E$ is \emph{weak-commutative} if, for any $a,b\in E$, $a+b$
exists iff $b+a$ exists. It is easy to see that $E$ is weak-commutative iff for all $a\in E$, $a^-=a^{\sim}$. Indeed, if $E$ is weak-commutative, from $a^-+a=1=a+a^{\sim}$ we obtain
$a+a^-$ and $a^{\sim}+a$ exist, so that $a^-\leq a^{\sim}$ and $a^{\sim}\leq a^-$. On the other hand, if $a^-=a^{\sim}$, then $a+b$ exists iff $b\leq a^{\sim}=a^-$ iff $b+a$ exists.
A weak-commutative PEA becomes an effect algebra iff $a+b=b+a$ whenever
one side of the equality exists. It was shown in \cite{YY} that
effect algebras are a proper subclass of weak-commutative
pseudo-effect algebras.

\begin{theorem}\label{th:wcpea} The class of weak-commutative PEAs
is a type-class, which is not a strong type-class.
\end{theorem}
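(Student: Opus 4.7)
The plan is to verify the three type-class axioms for the class $\mathcal{W}$ of weak-commutative PEAs, then argue by contradiction that the strong axiom (1') fails, using the fact from \cite{YY} that the inclusion of effect algebras into weak-commutative PEAs is proper.

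For axiom (1), I would use the characterization $a^- = a^\sim$ for all $a$. Suppose $E$ is weak-commutative and $c \in \Gamma(E)$. Given $a \in E[0,c]$, apply the central isomorphism $f_c : E \to E[0,c] \times E[0,c']$ from Definition \ref{de:central}. By Proposition \ref{pr:central}(ii), $f_c(a) = (a, 0)$, and $f_c(1) = (c, c')$. Computing coordinatewise, $f_c(a^-) = (c \diagdown a,\, c')$ and $f_c(a^\sim) = (a \diagup c,\, c')$. Since $E$ is weak-commutative, $a^- = a^\sim$, so equating first coordinates gives $c \diagdown a = a \diagup c$; that is, $a^{-_c} = a^{\sim_c}$. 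Hence $E[0,c]$ is weak-commutative. Axiom (2) is immediate because in a direct product, $(a_i) + (b_i)$ exists iff each $a_i + b_i$ exists, which by weak-commutativity of each factor is equivalent to each $b_i + a_i$ existing, i.e., to $(b_i) + (a_i)$ existing. Axiom (3) is formal since PEA-isomorphisms preserve existence of partial sums in both directions.

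For the failure of (1'), I would argue as follows. Assume for contradiction that $\mathcal{W}$ is a strong type-class, so that $E[0,h] \in \mathcal{W}$ for every weak-commutative PEA $E$ and every $h \in E$. Let $a, b \in E$ with $a + b$ defined, and set $h := a + b$. Then $a +_h b = h$ exists in the weak-commutative PEA $E[0,h]$, so $b +_h a$ also exists there, giving $b + a \leq h = a + b$ in $E$. Applying the same argument inside $E[0,\, b+a]$ yields $a + b \leq b + a$. Therefore $a + b = b + a$ whenever the sum is defined, making $E$ commutative, i.e., an effect algebra. This contradicts the result of \cite{YY} that effect algebras form a proper subclass of weak-commutative PEAs, so (1') must fail.

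The main obstacle is the verification of (1); once the central isomorphism is invoked, the argument reduces to a coordinatewise check, but one has to recognize that in $E[0,c]$ the unary operations $^{-_c}$ and $^{\sim_c}$ are exactly the first coordinates of the global $^-$ and $^\sim$. The non-strong part is a short two-step sandwich argument, contingent only on citing \cite{YY} for a weak-commutative PEA that is not an effect algebra.
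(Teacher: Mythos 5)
Your proposal is correct and follows essentially the same route as the paper: restriction to a central interval for axiom (1), and the two-step sandwich argument ($b+a\le a+b$ and $a+b\le b+a$) to show that (1') would force commutativity, contradicting \cite{YY}. The only cosmetic difference is that you verify (1) via the characterization $a^-=a^{\sim}$ and the isomorphism $f_c$, whereas the paper argues directly that for central $c$ and $a,b\le c$ the sum $a+b$ exists in $E[0,c]$ iff it exists in $E$ (using Theorem \ref{th:charcent}(ii)) and then invokes weak-commutativity of $E$; both are valid.
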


\begin{proof} Let $c\in \Gamma(E)$, $a,b\in E[0,c]$. Then $a+b$
exists in $E[0,c]$ iff $a+b$ exists in $E$, so $b+a$ exists in $E$,
whence $b+a$ exists in $E[0,c]$. Verification of the remaining
properties of a type-class is straightforward.
Suppose that the class in question is a strong type-class. Then
for every $d\in E$, $E[0,d]$ would be weak-commutative; hence
if $a,b\leq d$ and $a+b\leq d$, then $b+a\leq d$. Putting $d=a+b$
yields $b+a\leq a+b$, and putting $d=b+a$ yields $a+b\leq b+a$.
\end{proof}

In what follows we assume that $K$ and $F$ are TD subsets of the COPEA $E$ and that $K\subseteq F$.
As in Theorem \ref{th:theorem5.9}, we decompose $E$ as
$$
E=E[0,c_1]\times E[0,c_2]\times E[0,c_3] \, \mbox{and also as }\ E=E[0,d_1]\times E[0,d_2]\times E[0,d_3]
$$
where $c_1=c_{K\cap\gamma K}$ and $d_1=c_{F\cap\gamma F}$ are of types $K$ and $F$, respectively; $c_2=c_K\wedge c_{K\cap \gamma K}'$ and $d_2=c_F\wedge c_{F\cap\gamma F}'$ are locally types $K$ and $F$, but properly non-$K$ and properly non-$F$, respectively; and $c_3=c_K'$ and $d_3=c_F'$ are purely non-$K$ and purely non-$F$, respectively.

As $K\subseteq F$, it is clear that, type-$K$ implies type-$F$; locally type-$K$ implies locally type-$F$; purely non-$F$ implies purely non-$K$; and properly non-$F$ implies properly non-$K$.

The following theorem is an analogue of \cite[Theorem 6.6]{FoPuType} proved for effect algebras; since its proof in pseudo-effect algebra setting follows the same ideas, we omit it.

\begin{theorem}\label{th:th6.2t} There exists a direct sum decomposition
$$
E=E[0,c_{11}]\times E[0,c_{21}]\times E[0,c_{22}]\times E[0,c_{31}]\times E[0,c_{32}]\times E[0,c_{33}]
$$
where $c_{11}$ is type-$K$ (hence type-$F$); $c_{21}$ is type-$F$, locally type-$K$, but properly non-$K$; $c_{22}$ is locally type-$K$ (hence, locally type-$F$), but properly non-$F$ (hence, properly non-$K$); $c_{31}$ is type-$F$ and purely non-$K$; $c_{32}$ is locally type-$F$ but properly non-$F$, and purely non-$K$; and $c_{33}$ is purely non-$F$ (hence, purely non-$K$).
Moreover, such a decomposition is unique, with $c_{ij}=c_i\wedge d_j$ for $i,j=1,2,3$, where $c_{11}=c_1$, $c_{33}=d_3$ and $c_{12}=c_{13}=c_{23}=0$.
\end{theorem}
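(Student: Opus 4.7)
The plan is to exhibit the six central elements $c_{ij}=c_i\wedge d_j$ (for $(i,j)$ in the appropriate index set), verify they are pairwise orthogonal with sum $1$, show that the remaining three putative pieces vanish, and then appeal to Theorem \ref{th:theorem3.7} to obtain the claimed product decomposition. For this, I would first use Theorem \ref{th:central} that $\Gamma(E)$ is a Boolean algebra together with Theorem \ref{th:sum-sup}(iii), so that the nine meets $c_i\wedge d_j$ are pairwise orthogonal elements of $\Gamma(E)$ whose join is $(c_1\vee c_2\vee c_3)\wedge(d_1\vee d_2\vee d_3)=1\wedge 1=1$. Hence by Theorem \ref{th:theorem3.7} we already get a nine-fold direct product decomposition of $E$.

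The next step is to show that $c_{12}=c_{13}=c_{23}=0$, and this is where the hypothesis $K\subseteq F$ enters. Since $K\subseteq F$, every element of $K$ lies in $F$, so type-$K$ implies type-$F$, and locally type-$K$ implies locally type-$F$; in particular $c_1\leq c_{K\cap\gamma K}\leq c_{F\cap\gamma F}=d_1$ by Theorem \ref{th:theorem5.7}(i), and $c_1\vee c_2=c_K\leq c_F=d_1\vee d_2$ by Theorem \ref{th:theorem5.7}(iii). Consequently $c_1\wedge d_2\leq d_1\wedge d_2=0$, $c_1\wedge d_3\leq(c_1\vee c_2)\wedge d_3\leq(d_1\vee d_2)\wedge d_3=0$, and $c_2\wedge d_3\leq(c_1\vee c_2)\wedge d_3=0$. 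Dropping these three zero summands reduces the nine-fold decomposition to the six-fold one in the statement.

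The type assertions for the six nonzero pieces are then immediate consequences of Theorem \ref{th:theorem5.7} applied to each factor. For example, $c_{11}\leq c_1\leq c_{K\cap\gamma K}$ gives type-$K$; $c_{21}\leq d_1\leq c_{F\cap\gamma F}$ gives type-$F$, while $c_{21}\leq c_2\leq c_K\wedge(c_{K\cap\gamma K})'$ gives locally type-$K$ and properly non-$K$; $c_{22}\leq c_2$ gives locally type-$K$ and properly non-$K$, while $c_{22}\leq d_2\leq c_F\wedge(c_{F\cap\gamma F})'$ gives locally type-$F$ and properly non-$F$; the entries $c_{31}, c_{32}$ are purely non-$K$ because they are $\leq c_3\leq(c_K)'$, and inherit $F$-properties from $d_1,d_2$; finally $c_{33}\leq d_3\leq(c_F)'\leq(c_K)'$ gives purely non-$F$ (hence purely non-$K$).

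For uniqueness, given any decomposition with the stated properties one forms the $K$-coarsening by grouping $(c_{11})$, $(c_{21}\vee c_{22})$, $(c_{31}\vee c_{32}\vee c_{33})$: the first is type-$K$, the second is locally type-$K$ but properly non-$K$ (using part (ii) of Theorem \ref{th:theorem5.7} type properties pass through joins of pairwise orthogonal centrals), and the third is purely non-$K$; by uniqueness in Theorem \ref{th:theorem5.9}, these joins equal $c_1,c_2,c_3$. The analogous $F$-coarsening yields $d_1,d_2,d_3$. Then $c_{ij}\leq c_i\wedge d_j$; since both sides have equal joins $(=1)$ of pairwise orthogonal centrals, equality holds for each pair. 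The main obstacle is purely bookkeeping: tracking which of the $3\times 3$ combinations are forced to vanish by the containment $K\subseteq F$, and matching each surviving label against the corresponding clauses of Theorem \ref{th:theorem5.7}; no new technique beyond the earlier sections is required.
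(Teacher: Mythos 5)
Your proof is correct and follows exactly the route the paper intends (the paper omits the proof, deferring to the effect-algebra analogue in \cite[Theorem 6.6]{FoPuType}): refine the two three-fold decompositions of Theorem \ref{th:theorem5.9} by the meets $c_i\wedge d_j$ in the complete Boolean algebra $\Gamma(E)$, kill the three terms forced to vanish by $c_{K\cap\gamma K}\leq c_{F\cap\gamma F}$ and $c_K\leq c_F$, read off the type labels from Theorem \ref{th:theorem5.7}, and get uniqueness by coarsening back to the two three-fold decompositions. One small slip: in the uniqueness step the fact that the type properties pass to joins of pairwise orthogonal central elements follows from parts (i), (iii)--(vi) of Theorem \ref{th:theorem5.7} (each property being equivalent to domination by a fixed central element), not from part (ii), which concerns STD sets.
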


In analogy with the classical decomposition of von Neumann algebras into types I, II, and III, we introduce the following definition (see also \cite[Definition 6.3]{FoPuType}).

\begin{definition}\label{de:I,II,III} For the TD sets $K$ and $F$ with $K\subseteq F$, the COPEA $E$ is \emph{type I} iff it is locally type-$K$; \emph{type II} iff it is locally type-$F$, but purely non-$K$; and \emph{type III} iff it is purely non-$F$.
It is type I$_F$ (respectively, type II$_F$) iff it is type I (respectively, type II) and also type-$F$. It is type I$_{{\bar F}}$ (respectively, type II$_{{\bar F}}$ iff it is of type I (respectively, type II) and also properly non-$F$.
\end{definition}

The following theorem is the I/II/III-decomposition theorem for COPEAs.

\begin{theorem}\label{th:th6.4t} $E$ decomposes as $E=E[0,c_I]\times E[0,c_{II}]\times E[0,c_{III}]$, where $c_I, c_{II}$ and $c_{III}$ are central elements of types I,II, and III, respectively; such a decomposition is unique, and $c_I=c_K$, $c_{II}=c_F\wedge c_K'$, $c_{III}=c_F'$.

Moreover, there are further decompositions $E[0,c_I]=E[0,c_{IF}]\times E[0,c_{I{\bar F}}]$ and $E[0,c_{II}]=E[0,c_{IIF}]\times E[0,c_{II{\bar F}}]$, where $c_{IF}, c_{I{\bar F}}, c_{IIF}, c_{II{\bar F}}$ are central elements of types $I_F, I_{{\bar F}}, II_{F}, II_{{\bar F}}$, respectively; these decompositions are also unique.
\end{theorem}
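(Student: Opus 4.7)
The plan is to exhibit the three central elements explicitly, namely $c_I := c_K$, $c_{II} := c_F \wedge c\sb{K}\sp{\,\prime}$, and $c_{III} := c\sb{F}\sp{\,\prime}$, and to invoke Theorem \ref{th:theorem5.7} to check that each has the required I/II/III-type. A preliminary observation is that $K \subseteq F$ forces $K \cap \gamma K \subseteq F \cap \gamma F$, hence $\gamma K \subseteq \gamma F$, and by Theorem \ref{th:theorem5.3}(ii) this yields $c_K \leq c_F$ and $c\sb{K \cap \gamma K} \leq c\sb{F \cap \gamma F}$ in the Boolean algebra $\Gamma(E)$. This inequality is the one indispensable ingredient and is the only place where the hypothesis $K \subseteq F$ enters the argument.

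Next I would verify that $c_I, c_{II}, c_{III}$ are pairwise orthogonal and sum to $1$. Pairwise orthogonality is immediate: $c_I \wedge c_{II} = c_K \wedge c_F \wedge c\sb{K}\sp{\,\prime} = 0$; $c_I \wedge c_{III} = c_K \wedge c\sb{F}\sp{\,\prime} \leq c_F \wedge c\sb{F}\sp{\,\prime} = 0$; and $c_{II} \wedge c_{III} = 0$. For the sum, using $c_K \leq c_F$ I compute in $\Gamma(E)$ that $c_K \vee (c_F \wedge c\sb{K}\sp{\,\prime}) = (c_K \vee c_F) \wedge (c_K \vee c\sb{K}\sp{\,\prime}) = c_F$, whence $c_I \vee c_{II} \vee c_{III} = c_F \vee c\sb{F}\sp{\,\prime} = 1$. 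By Theorem \ref{th:theorem3.7} this gives the direct-product decomposition $E = E[0,c_I] \times E[0,c_{II}] \times E[0,c_{III}]$. The type statements are then immediate from Theorem \ref{th:theorem5.7}: $c_I = c_K$ is locally type-$K$ (hence type I); $c_{II} \leq c_F$ and $c_{II} \leq c\sb{K}\sp{\,\prime}$ so $c_{II}$ is locally type-$F$ and purely non-$K$ (hence type II); and $c_{III} \leq c\sb{F}\sp{\,\prime}$ is purely non-$F$ (hence type III).

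For uniqueness I would assume another triple $(c\sb{I}\sp{\,\ast}, c\sb{II}\sp{\,\ast}, c\sb{III}\sp{\,\ast})$ of pairwise orthogonal central elements summing to $1$ with the required types. Theorem \ref{th:theorem5.7} gives $c\sb{I}\sp{\,\ast} \leq c_K$, $c\sb{II}\sp{\,\ast} \leq c_F \wedge c\sb{K}\sp{\,\prime}$, and $c\sb{III}\sp{\,\ast} \leq c\sb{F}\sp{\,\prime}$, i.e., $c\sb{i}\sp{\,\ast} \leq c_i$ for $i = I, II, III$. Since both triples are pairwise orthogonal and sum to $1$ in the Boolean algebra $\Gamma(E)$, the standard Boolean-algebra argument (taking complements of any strict inequality contradicts the partition) forces $c\sb{i}\sp{\,\ast} = c_i$.

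For the refined decompositions I would set $c_{IF} := c_I \wedge c\sb{F \cap \gamma F}$, $c\sb{I\bar F} := c_I \wedge c\sb{F \cap \gamma F}\sp{\,\prime}$, and analogously $c_{IIF} := c_{II} \wedge c\sb{F \cap \gamma F}$, $c\sb{II\bar F} := c_{II} \wedge c\sb{F \cap \gamma F}\sp{\,\prime}$. In each case orthogonality and the fact that the two pieces sum to the parent ($c_I$ or $c_{II}$) are instant Boolean-algebra identities. The types are confirmed by parts (i) and (v) of Theorem \ref{th:theorem5.7}: domination by $c\sb{F \cap \gamma F}$ means type-$F$, while domination by $c\sb{F \cap \gamma F}\sp{\,\prime}$ means properly non-$F$; local type-$K$ or purely non-$K$ is inherited from $c_I$ or $c_{II}$. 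Theorem \ref{th:theorem3.7} produces the direct-product decompositions, and uniqueness is proved exactly as above. I expect no substantive obstacle: the argument is essentially a bookkeeping exercise in the Boolean algebra $\Gamma(E)$ parallel to the proof of Theorem \ref{th:theorem5.9}, with the single new ingredient being the inequality $c_K \leq c_F$ deduced from $K \subseteq F$.
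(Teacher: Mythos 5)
Your proof is correct, but it takes a different route from the paper. The paper gives no self-contained argument for this theorem: it obtains the I/II/III decomposition by regrouping the summands of the finer sixfold decomposition of Theorem \ref{th:th6.2t} (setting $c_I:=c_{11}+c_{21}+c_{22}$, $c_{II}:=c_{31}+c_{32}$, $c_{III}:=c_{33}$, etc.), and Theorem \ref{th:th6.2t} is itself quoted from the effect-algebra paper \cite{FoPuType} with proof omitted. You instead prove the statement directly: you extract the key inequality $c_K\leq c_F$ (and $c_{K\cap\gamma K}\leq c_{F\cap\gamma F}$) from $K\subseteq F$ via Theorem \ref{th:theorem5.3}, verify by Boolean computation in $\Gamma(E)$ that $c_K$, $c_F\wedge c_K'$, $c_F'$ form a partition of unity, invoke Theorem \ref{th:theorem3.7} for the product decomposition, and read off the types and the uniqueness from the order-characterizations in Theorem \ref{th:theorem5.7}; the refinements $c_{IF}=c_I\wedge c_{F\cap\gamma F}$ etc.\ are handled the same way, and one checks they agree with the paper's $c_{11}+c_{21}$, $c_{22}$, $c_{31}$, $c_{32}$. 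Your version is essentially the argument of Theorem \ref{th:theorem5.9} run with two nested TD sets, and it has the merit of being self-contained and of making the closed-form expressions for the $c$'s transparent; the paper's version buys the finer sixfold decomposition and the compatibility between the two levels at the cost of relying on the unproved Theorem \ref{th:th6.2t}. (One cosmetic remark: $\gamma K\subseteq\gamma F$ follows directly from $K\subseteq F$ rather than from $K\cap\gamma K\subseteq F\cap\gamma F$, but both containments are true and your use of them is sound.)
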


These decompositions are obtained if in Theorem \ref{th:th6.2t} we put $c_I:=c_{11}+ c_{21}+ c_{22}$; $c_{II}:=c_{31}+ c_{32}$ and $c_{III}=c_{33}$; $c_{IF}:= c_{11}+c_{21}$, $c_{I{\bar F}}:=c_{22}$, $c_{IIF}:=c_{31}$,
$c_{II{\bar F}}:=c_{32}$. Notice that, beyond the traditional I/II/III decomposition,  the type I$_F$ summand decomposes as $E[0,c_{IF}]=E[0,c_{11}]\times E[0,c_{21}]$, where
$c_{11}$ is type-$K$ (hence type-$F$) and $c_{21}$ is is type -$F$ and locally type-$K$, but properly non-$K$.

\begin{example}\label{ex:ex1}
Taking $K:=[A]$, the set of all polyatoms, and $F:=H$, the set of all monads of $E$, in Theorem \ref{th:th6.4t}, we have $[A]\subseteq H$, and $E$ decomposes as $E=E[0,r_1]\times E[0,r_2]\times E[0,r_3]$
where every nonzero direct summand of $E[0,r_1]$ contains an atom; $E[0,r_2]$ is atom free, but every nonzero direct summand of $E[0,r_2]$ contains a nonzero monad; and $E[0,r_3]$ contains no nonzero monad. This decomposition is unique. Indeed, $r_1=c_{[A]}$ is locally type-$[A]$; $r_2=c_H\wedge c_{[A]}'$ is locally type-$H$ and purely non-$[A]$, and  $r_3=c_H'$ is purely non-$H$ (see Theorem \ref{th:th6.4t} and Corollary \ref{co:co5.4t}).
\end{example}

\begin{example}\label{ex:ex2}
Take $K=: EA$, the subset of all elements $e\in E$ such that $E[0,e]$ is commutative PEA (i.e., an effect algebra),
and $F=:W$, the set of all elements $d\in E$ such that $E[0,d]$ is weak-commutative. Then $EA\subseteq W$, and we obtain the decomposition $E=E[0,v_1]\times E[0,v_2]\times E[0,v_3]$.
The summand $E[0,v_1]$ is locally commutative in the sense that
 $v_1=\gamma e=c_{EA}$; the summand $E[0,v_2]$ is locally weak-commutative, but purely non-commutative, that is, $v_2=c_W\wedge c_{EA}'$; and $E[0,v_3]$ is purely non-weak-commutative, that is, $v_3=c_W'$.
We recall that then every direct sub-summand of $E[0,v_1]$ contains an element $e\in EA$; every direct sub-summand of $E[0,v_2]$ contains an element $d\in W$, but
$E[0,v_2]\cap EA=\{ 0\}$; and $E[0,v_3]$ contains no element of $W$.

The summands $E[0,v_1]$  and $E[0,v_2]$ decompose further into weak-commutative and properly non-weak-commutative parts; and the weak-commutative part of $E[0,v_1]$ admits a further decomposition into a commutative and a locally commutative, but properly non-commutative parts.
\end{example}

Let $R2$ denote the STD of elements $e\in E$ such that $E[0,e]$ satisfies (RDP$_2$) and $L$ denote the set of elements $e\in E$ such that $E[0,e]$ is a lattice.

\begin{example}\label{ex:ex3}
There exists a decomposition $E=E[0,c_{11}]\times E[0,c_{21}]\times E[0,c_{22}]\times  E[0,c_{31}]\times E[0,c_{32}]\times E[0,c_{33}]$ where $E[0,c_{11}]$ satisfies (RDP$_2$), hence is a lattice; $E[0,c_{21}]$ is a lattice, every direct sub-summand contains an element from $R2$, but no direct sub-summand satisfies (RDP$_2$); $E[0,c_{22}]$ contains no lattice ordered direct sub-summand (hence no sub-summand satisfying (RDP$_2$)), but every direct sub-summand contains an element from $R2$ (hence from $L$); $E[0,c_{31}]$ is a lattice and contains no element from $R2$;
$E[0,c_{32}]$ contains no lattice ordered direct sub-summand, and no element from $R2$,
 but every direct sub-summand contains an element from $L$; and $E[0,c_{33}]$ contains no element from $L$ (hence no element from $R2$).
Moreover, such a decomposition is unique.

Indeed, such a decomposition is obtained from decompositions corresponding to STD sets $R2$ and $L$ as in Theorem \ref{th:th6.4t}, taking into account that $R2\subseteq L$ by proposition \ref{pr:pr3.3DV1} (ii).

Notice that by \cite{DV2}, a pseudo-effect algebra satisfying (RDP$_2$) is a pseudo-MV algebra (a non-commutative analogue of an MV-algebra, see \cite{R, GI}).

\end{example}

\end{document}